\newtheorem{teo}{\textbf\textrm\textsc{Theorem}}[section]
\newtheorem{prop}[teo]{\textbf\textrm\textsc{Proposition}}
\newtheorem{lemma}[teo]{\textbf\textrm\textsc{Lemma}}
\theoremstyle{definition}
\newtheorem{oss}[teo]{\textbf\textrm\textsc{Remark}}
\newcounter{cst}
\newcommand{\ctel}[1]{c_{\refstepcounter{cst}\label{#1}\thecst}}
\newcommand{\cter}[1]{c_{\ref{#1}}}
\def\rw{\rightarrow}
\def\ru{\rightharpoonup}
\def\d{\mathcal{D}}
\def\e{\varepsilon}
\def\G{\Gamma}
\def\f{\varphi}
\def\un{\mathbf{1}}
\def\pa{\partial}
\def\p{\psi}
\def\P{\Psi}
\def\F{\Phi}
\def\n{\nabla}
\def\R{\mathbb{R}}
\def\ds{\displaystyle}
\def\be{\begin{equation}}
\def\ee{\end{equation}}
\def\ov#1{\overline{#1}}
\newcommand{\oo}{\infty}
\newcommand{\lt}{\left}
\newcommand{\rt}{\right}
\newcommand{\bk}{\color{black}}
\newcommand{\rd}{}%\color{red}}
\date{}
\title[Mathematical analysis of a reduced corrosion model]{Mathematical analysis of a thermodynamically consistent reduced model for iron corrosion}
\author[C. Canc\`es]{Cl\'ement Canc\`es}
\address{Cl\'ement Canc\`es ({\tt clement.cances@inria.fr}): Univ. Lille, Inria, CNRS, UMR 8524 - Laboratoire Paul Painlevé, F-59000 Lille, France.}
\author[C. Chainais-Hillairet]{Claire Chainais-Hillairet}
\address{Claire Chainais-Hillairet ({\tt claire.chainais@univ-lille.fr}): Univ. Lille, CNRS, Inria, UMR 8524 - Laboratoire Paul Painlevé, F-59000 Lille, France.}
\author[B. Merlet]{Beno\^{\i}t Merlet}
\address{Beno\^{\i}t Merlet ({\tt benoit.merlet@univ-lille.fr}): Univ. Lille, CNRS, Inria, UMR 8524 - Laboratoire Paul Painlevé, F-59000 Lille, France.}
\author[F. Raimondi]{Federica Raimondi}
\address{Federica Raimondi ({\tt federica.raimondi@inria.fr}): Univ. Lille, CNRS, Inria, UMR 8524 - Laboratoire Paul Painlevé, F-59000 Lille, France.}
\author[J. Venel]{Juliette Venel}
\address{Juliette Venel ({\tt juliette.venel@uphf.fr}): Univ. Polytechnique Hauts-de-France, CERAMATHS, FR CNRS 2037, F-59313 Valenciennes, France.}
\begin{document}

\begin{abstract}
We are interested in a reduced model for corrosion of iron, in which ferric cations and electrons evolve in a fixed oxide layer subject to a self-consistent electrostatic potential. Reactions at the boundaries are modeled thanks to Butler-Volmer formulas, whereas the boundary conditions on the electrostatic potential model capacitors located at the interfaces between the materials. Our model takes inspiration in existing papers, to which we bring slight modifications in order to make it consistent with thermodynamics and its second principle. Building on a free energy estimate, we establish the global in time existence of a solution to the problem without any restriction on the physical parameters, in opposition to previous works. The proof further relies on uniform estimates on the chemical potentials that are obtained thanks to Moser iterations. Numerical illustrations are finally provided to highlight the similarities and the differences between our new model and the one previously studied in the literature. 
\end{abstract}

\maketitle
\noindent
\keywords{{\bf Keywords:} corrosion modeling, 
drift-diffusion system, entropy method, existence, Moser iterations}
\\[5pt]
\subjclass{{\bf Mathematics Subject Classification (MSC):} 35Q79 -- 35M33 -- 35A01 -- 35B35}

\section{Introduction}\label{sec.intro}

\subsection{General framework of the study}

At the request of the French nuclear waste management agency ANDRA, investigations are carried out in order to evaluate the long-term safety of the geological repository of radioactive waste. The context is the following: the waste is confined in a glass matrix, placed into steel canisters and then stored in a claystone layer at a depth of several hundred of meters. The long-term safety assessment of the geological repository has to take into account the degradation of the carbon steel
used for the waste overpacks and the cell disposal liners, which are in contact with the claystone formation. The study of the corrosion processes that arise at the surface of the steel canisters  and of the cell disposal liners takes part in the modelling and simulation of the repository. This has motivated the introduction of the Diffusion Poisson Coupled Model (DPCM) by Bataillon {\em et al.} in~\cite{Eacta}.

The DPCM describes the oxidation of 
a metal covered by an oxide  layer (magnetite) in contact with the claystone. It consists in a system of drift-diffusion equations on the density of charge carriers (electrons, $F\!e^{3+}$ cations and oxygen vacancies) coupled with a Poisson equation on the electric potential. Boundary conditions of Robin-Fourier type are prescribed by the electrochemical reactions and the potential drops at the interfaces with the claystone and with the metal.
They induce additional couplings in the system and involve numerous physical parameters. The system also includes equations governing the motion of the boundaries.

Up to now, no existence result has been established for the general DPCM~\cite{Eacta}. However, a few mathematical results on some simplified versions  have been obtained. The well-posedness of a two-species (electrons and ferric cations) model set on a fixed domain has been established in~\cite{CHLV-AML} for the stationary case and in~\cite{CHLV-DCDS} for the evolutive case. Some numerical methods for the simulation of DPCM have been introduced in~\cite{JCP} and implemented in the code CALIPSO. The numerical experiments proposed in~\cite{Eacta, JCP} have highlighted the long-time behaviour of the model: after a transient period, the size of the oxide layer stays constant, while both interfaces move at the same velocity and the densities of charge carriers as the electrical potential have stationary profiles. This corresponds to a traveling wave solution to DPCM. The existence of a traveling wave solution has been proved first on a simplified version of DPCM in~\cite{CHG-NARWA}. Recently in~\cite{BCHZ}, the existence is also established for the original DPCM thanks to a computer-assisted proof.

The description of the transport of charge carriers in the oxide layer proposed by the DPCM is similar to the transport of charge carriers in a semiconductor device as proposed by van Roosbroeck~\cite{vR1950}. The differences come first from some reaction terms due to recombination-generation in the continuity equations and mainly from the boundary conditions, which are of Dirichlet-Neumann type in the semiconductor setting. The mathematical analysis of the drift-diffusion-Poisson system of equations for semiconductor devices is the subject of a series of seminal papers by Gajewski and Gr\"{o}ger~\cite{GG_1986,GG_1989, GG_1990}. The strategy of proof proposed by Gajewski and Gr\"{o}ger relies strongly on the underlying variational structure of the model, in agreement with thermodynamics. One keypoint is a convex functional which can be interpreted as a free energy from the viewpoint of thermodynamics . This strategy has been further used in many papers dealing for instance  with electro-reaction-diffusion  processes~\cite{GG_1996, GGH_1996,GH_1997}, spin-polarized drift-diffusion models~\cite{Glitzky_2008}, electronic models for solar cells~\cite{Glitzky_2011,Glitzky_2012},...               

In order to obtain some mathematical results for the DPCM with a similar strategy, it is crucial to understand the impact of the moving boundary equations and of the boundary conditions on the structure of the system. Actually, the derivation of the DPCM proposed in~\cite{Eacta} does not rely on energetic considerations. As a consequence, its thermodynamic stability is unclear and neither a satisfactory well-posedness result nor the assessment of the long-time behavior of the system have been established so far. 
\rd 
This paper was thought as a first step towards the derivation and the analysis of a thermodynamically consistent DPCM model. Here, we restrict our attention to a simplified setting with only two species (electrons and iron cations), the goal being to get consistent couplings 
between the boundary conditions and the bulk equations. Since oxygen vacancies are not considered in our simplified setting, the boundaries are fixed throughout this paper. The derivation of a thermodynamically consistent counterpart to the full DPCM model will be the purpose of a future work. 
\bk

\subsection{From the original two-species DPCM to the new model}\label{sec.DPCMvDPCM}

Let us start by introducing the original two-species DPCM, which is set on the  fixed domain $(0,1)\subset\R$. It is already a challenge on this simplified model to manage the complexity of the boundary conditions in order to establish the dissipative behaviour of the associate 
system of partial differential equations.

We will denote by $u_1$ the density of ferric cations, $u_2$ the density of electrons and $v_0$ the electric potential. 
We consider a scaled model, which involves only dimensionless constants and scaled parameters,  detailed below.
The original DPCM in this case can be written, for $t\ge 0$,
\begin{subequations}\label{DPCM}
\begin{flalign}
&&&&\pa_t u_1 +\pa_x J_1&=0\quad\text{ with }\quad J_1=-d_1(\pa_x u_1 +z_1 u_1 \pa_x v_0)&\mbox{in } (0,1),&&\label{DPCM.u1}\\
&&&&\pa_t u_2 +\pa_x J_2&=0\quad\text{ with }\quad J_2=-d_2(\pa_x u_2 + z_2 u_2 \pa_x v_0)&\mbox{in } (0,1),&&\label{DPCM.u2}\\
&&&&-\lambda^2 \pa^2_{xx} v_0&= z_1 u_1 + z_2 u_2 +\rho_\textrm{hl} &\mbox{in }  (0,1),&&
\end{flalign}
\end{subequations}
with the following boundary conditions:
\begin{subequations}\label{CLDPCM}
\begin{align}
-&J_1(0)=k_1^0u_1(0) e^{\displaystyle\frac{z_1}{2}v_0(0)}-m_1^0({\ov u}_1-u_1(0))e^{-\displaystyle\frac{z_1}{2}v_0(0)},\label{CLDPCM.u1.0}\\
&J_1(1)=m_1^1 u_1(1)e^{\displaystyle\frac{z_1}{2}(v_0(1)-V)}-k_1^1({\ov u}_1-u_1(1))e^{-\displaystyle\frac{z_1}{2}(v_0(1)-V)},\label{CLDPCM.u1.1}\\
-&J_2(0)=k_2^0 u_2(0) e^{\displaystyle\frac{z_2}{2}v_0(0)}-m_2^0 e^{-\displaystyle\frac{z_2}{2}v_0(0)},\label{CLDPCM.u2.0}\\
&J_2(1)=m_2^1 u_2(1)-k_2^1 {\ov u}_2^\textrm{met} \log \left(1+e^{\displaystyle z_2(V-v_0(1))}\right),\label{CLDPCM.u2.1}\\
&v_0(0)-\alpha_0\pa_x v_0(0)=\Delta\P_0^\textrm{pzc},\label{CLDPCM.v0.0}\\
&v_0(1)+\alpha_1 \pa_x v_0(1)= V-\Delta \P_1^\textrm{pzc}.\label{CLDPCM.v0.1}
\end{align}
\end{subequations}
The system~\eqref{DPCM}-\eqref{CLDPCM} is supplemented with initial conditions on $u_1$ and $u_2$:
\be\label{CI}
u_1(t=0)= u_1^\textrm{in},\quad u_2(t=0)=u_2^\textrm{in}.
\ee

Let us comment the different parameters arising in the system:
\begin{itemize}
\item $\lambda^2$ is the scaled Debye length, $\alpha_0$ and $\alpha_1$ are positive dimensionless parameters related to the differential capacitances of the interfaces;
\item $\rho_\textrm{hl}$ is the net charge density of the ionic species in the host lattice, assumed to be constant in space, $\rho_\textrm{hl}=-5$;
\rd
\item $z_i$ is the (dimensionless) charge of the $i^\textrm{th}$ species. In our setting, $z_1 = +3$ and $z_2 = -1$; 
\bk 
\item $d_1$ and $d_2$ are the scaled diffusion coefficients. In practice the scaling is relative to the characteristic time of cations, $d_1=1$ and $d_2\gg 1$;
\item ${\ov u}_1$ is the maximum occupancy for octahedral cations in the host lattice;
\item ${\ov u}_2^{\textrm{met}}$ is the electron density of state of the metal (Friedel model);
\item $(k_i^{\Gamma},m_i^{\Gamma})_{i={1,2}, \Gamma\in\{0,1\}}$ are interface kinetic functions. We assume that these functions are
constant and strictly positive;
\item $\Delta\P_0^\textrm{pzc}$, $\Delta \P_1^\textrm{pzc}$ are respectively the outer and inner voltages of zero charge, $V$ is the applied potential.
\end{itemize}

Existence of a global weak solution for a system close to~\eqref{DPCM}--\eqref{CI} has been established in~\cite{CHLV-DCDS}. The main difference relies in the definition of the boundary conditions \eqref{CLDPCM.u2.0} and \eqref{CLDPCM.u2.1} for the electrons. Moreover the result is obtained under restrictive assumptions on the parameters, the physical sense of which being unclear. 
\rd
Let us highlight the misfit of model~\eqref{DPCM}--\eqref{CI} with respect to Onsager's reciprocal relation 
\cite{Onsager_1931_I,Onsager_1931_II} or its generalization beyond the linear setting~~\cite{Mie11}. 
Since inertia is not intended to play a role in our model, one expects the fluxes to be proportional to the driving forces, which can be decomposed into chemical and electrical contributions: 
\[
J_i=-\sigma_i\pa_x \xi_i, \quad\mbox{ for }i = 1,2, 
\]
where $\sigma_i >0$ denotes the mobility of the $i^\textrm{th}$ species (which may depend on $u_i$), and where 
\be\label{eq:xi_i}
\xi_i = v_i + z_i v_0, \quad\mbox{ for } i = 1,2, 
\ee
denotes the electrochemical potential, $v_i$ being the chemical potential of species $i$. 
The expressions for the fluxes in~\eqref{DPCM.u1} and \eqref{DPCM.u2} then suggest that 
\be\label{eq:DPCM.bulk.old}
\sigma_i = d_i u_i \quad\textrm{and}\quad v_i = \log\, u_i + \textrm{constant}, \quad\mbox{ for } i = 1,2. 
\ee
As a consequence,~\eqref{DPCM.u1} and  \eqref{DPCM.u2} are compatible with thermodynamics, provided that $u_i$ and $v_i$ 
are linked through Boltzmann statistics. 

On the other hand, one expects the boundary fluxes to depend on the electrochemical potential drop. Let us denote by $\xi_i^{\G}$ the electrochemical potential on the other side of the interface $\Gamma$ (\textit{i.e.} in the solution if $\Gamma = 0$ and in the metal if $\Gamma = 1$), while  $\xi_i(\Gamma)$ is the trace on $\Gamma$ of 
the electro-chemical potential $\xi_i$ defined in the oxide layer $(0,1)$. 
More precisely, one expects that 
\be\label{eq:Onsager.Gamma} 
J_i(\Gamma)\cdot \nu^\Gamma=r_i^\G g_i^\G(\xi_i(\Gamma)-\xi_i^{\G}), \qquad\mbox{ for }i = 1, 2, \mbox{ and }\G \in \{0,1\}, 
\ee
where $\nu^\G$ denotes the normal to $\Gamma$ ($\nu^0 = -1$ and $\nu^1 = 1$), while $r_i^\G$ is positive and 
possibly depends on $u_i$ and $g_i^\G$ is a non-decreasing function such that $g_i^\G(0) = 0$, so that 
\be\label{eq:g_i}
\forall y\in\R,\  y \ g_i^\G(y)  \ge 0, \quad \mbox{ for } i = 1,2, \mbox{ and } \G \in \{0,1\}.
\ee
As a consequence of~\eqref{eq:g_i}, boundary conditions of type~\eqref{eq:Onsager.Gamma} are dissipative in the 
sense that 
\be\label{DISS.CL}
J_i(\Gamma)\cdot \nu^\Gamma (\xi_i(\Gamma)-\xi_i^{\G}) \ge 0. 
\ee

Denoting 
\[
\kappa_1^0=2\sqrt{k_1^0 m_1^0},\qquad \kappa_1^1=2\sqrt{k_1^1 m_1^1},\qquad \xi_1^0=\log\dfrac{m_1^0}{k_1^0},\qquad
\xi_1^1=\log \dfrac{k_1^1}{m_1^1}+z_1 V,
\]
we can rewrite the boundary conditions for the cations~\eqref{CLDPCM.u1.0}and \eqref{CLDPCM.u1.1} as 
\rd
\[
J_1^\G \cdot \nu^\G = \kappa_1^\G\sqrt{u_1(\G)({\ov u}_1 -u_1(\G))}\sinh \lt(\ds\dfrac{1}{2}\lt(\log \dfrac{u_1(\G)}{{\ov u}_1 -u_1(\G)}+z_1 v_0(\G)-\xi_1^\G\rt)\rt).
\]
\bk
It appears then natural to define the chemical and electrochemical potentials of the cations by 
\be\label{eq:Blakemore}
v_1=\log \dfrac{u_1}{{\ov u}_1 -u_1}\quad \mbox{ and }\quad \xi_1=v_1+ z_1 v_0
\ee
in order to satisfy \eqref{eq:Onsager.Gamma} with 
\be\label{eq:r1g1}
r_1^\G = \kappa_1^\G \sqrt{u_1(\G)({\ov u}_1 -u_1(\G))}\mbox{ and }
g_1^\G(y) = \sinh \lt(\dfrac{y}{2}\rt).
\ee
In other words, the cations should rather obey a Blakemore statistics than Boltzmann statistics as suggested by~\eqref{eq:DPCM.bulk.old}. \bk

Similarly, we can rewrite the boundary condition~\eqref{CLDPCM.u2.0} as 
\[
-J_2(0)=\kappa_2^0\sqrt{u_2(0)}\sinh\lt( \dfrac{1}{2}(\log u_2(0) + z_2 v_0(0)-\xi_2^0)\rt)
\]
by setting  
\[
\kappa_2^0=2\sqrt{k_2^0 m_2^0},\qquad \xi_2^{0}=\log \dfrac{m_2^0}{k_2^0}.
\]
\rd 
Under this form, the electron flux $J_2(0)$ clearly enters the framework of~\eqref{eq:Onsager.Gamma} for
\be\label{eq:r20g20}
r_2^0 = \kappa_2^0 \sqrt{u_2(0)}\quad\mbox{ and }\quad
g_2^0(y) = \sinh \lt(\ds\dfrac{y}{2}\rt)
\ee
and 
\[
v_2 = \log(u_2)\quad\mbox{ and } \quad\xi_2= v_2 + z_2 v_0.
\]
Boltzmann statistics is encoded here, in accordance to what was prescribed in the bulk~\eqref{eq:DPCM.bulk.old}. 

Yet, it seems  impossible to recast the boundary condition~\eqref{CLDPCM.u2.1} for the electrons at the oxide/metal interface $\Gamma = 1$ into the framework~\eqref{eq:Onsager.Gamma}. This led us to propose the following modification of the boundary condition~\eqref{CLDPCM.u2.1}:
\be\label{CLDPCM.u2.1_new}
J_2(1)=m_2^1 u_2(1)-k_2^1 e^{z_2 (V-v_0(1))},
\ee
 which enters the framework of~\eqref{eq:Onsager.Gamma} by setting 
 \be\label{eq:r21g21}
 r_2^1 = m_2^1 u_2(1), \qquad \xi_2^1 = \log \dfrac{k_2^1}{m_2^1} + z_2 V  \quad \textrm{ and }\quad g_2^1(y) = 1 - e^{-y}. 
 \ee
 
Our concern related to the boundary condition~\eqref{CLDPCM.u2.1} having been solved by 
changing it into~\eqref{CLDPCM.u2.1_new}, the last misfit to be solved is related to the Blakemore statistics~\eqref{eq:Blakemore} for the cation which is not consistent with the bulk equation~\eqref{DPCM.u1}. On the other hand, the Butler-Volmer laws~\eqref{CLDPCM.u1.0} and \eqref{CLDPCM.u1.1} suggest some vacancy diffusion involving a nonlinear mobility $\sigma_1$, whereas interstitial diffusion corresponding to some linear mobility $\sigma_1$ has been prescribed so far in the bulk for the cations by \eqref{eq:DPCM.bulk.old}. Here, we suggest to fully adopt the vacancy diffusion process, yielding
\be\label{eq:vacancy_diff}
\sigma_1 = d_1\dfrac{u_1(\ov u_1 - u_1)}{\ov u_1},
\ee 
with $v_1$ being related to $u_1$ through the Blakemore statistics~\eqref{eq:Blakemore}.
With this choice, the diffusion remains linear, but the convection due to the electric potential becomes nonlinear:
\be\label{eq:J_1}
\partial_t u_1 + \partial_x J_1 = 0, \qquad J_1 = - \sigma_1 \partial_x (v_1+z_1 v_0)=-d_1(\partial_x u_1+ z_1\dfrac{u_1(\ov u_1 - u_1)}{\ov u_1}\partial_x v_0). 
\ee
For the electrons, we stick to band conduction leading to linear mobility $\sigma_2$ and 
to Boltzmann statistics:
\be\label{eq:band_cond}
\sigma_2 = d_2 u_2, \qquad v_2 = \log\, u_2. 
\ee
so that~\eqref{DPCM.u2} remains unchanged. 
\bk

\subsection{Main results and outline of the paper}

Our aim in this paper is to prove the existence of a weak solution under very general assumptions to the new DPCM introduced above,
\rd  
where~\eqref{DPCM.u1} has been changed into~\eqref{eq:J_1} with $\sigma_1$ defined by~\eqref{eq:vacancy_diff}, and where the oxide/metal interface condition~\eqref{CLDPCM.u2.1} has been modified into~\eqref{CLDPCM.u2.1_new}.  
\bk
In Section~\ref{sett}, we fix the mathematical setting: we recall the notations and the system of partial differential equations we consider; we also collect the general assumptions and give a weak formulation~\eqref{eq:P} to the model. The main result, namely the global in time existence of a solution, is then stated in Theorem~\ref{teoex}. Section~\ref{sec.apriori} is devoted to physically motivated estimates and more general a priori estimates on a solution to~\eqref{eq:P}. Before dealing with~\eqref{eq:P}, we introduce a family of regularized problems~\eqref{eq:PM} (with $M>0$) in Section~\ref{sec.PM} and we prove their solvability as stated in Proposition~\ref{expm}. Finally, in Section~\ref{sec.bounds}, we establish some lower and upper bounds for the solution to~\eqref{eq:PM} which do not depend on the regularization level $M$ (when sufficiently large). These estimates lead to the existence of a solution to~\eqref{eq:P}.

\section{Mathematical setting \rd and main result \bk}\label{sett}
In this section we give the precise setting of the problem we are concerned with. 
\subsection{Notation}
In addition to the notation already introduced in Section~\ref{sec.DPCMvDPCM}, we denote by 
$\ov u_2$ the reference occupancy for electrons (equal to $1$ in practice), 
and by $u_0$ the total charge density in the oxide layer. 
The outer electro-chemical potentials of iron cations and electrons at the interfaces are denoted by $\xi_i^\G,\ i=1,2,\ \G\in \{0,1\}$, which are assumed to not depend on time. 
Finally, 
\[
f^0 = \dfrac{\lambda^2}{\alpha_0} \Delta \P_0^\textrm{pzc}, 
\qquad 
f^1 = \dfrac{\lambda^2}{\alpha_1} (V - \Delta\P_1^\textrm{pzc})
\quad\ \textrm{ and }\ \quad
\beta^\G = \dfrac{\lambda^2}{\alpha_\Gamma},
\quad\mbox{ for } 
\G\in \{0,1\}
\]
 are some given values related respectively to the interface potentials and to the differential capacitance of the boundaries.
Then, we consider the corrosion model as a system of partial differential equations whose unknowns are the charge densities $(u_0,u_1,u_2)$ and the electrical/chemical potentials $(v_0,v_1,v_2)$. It writes in  $(0,1)\times (0,+\oo)$, for $i=1,2$:
\begin{subequations}\label{vDPCM}
\begin{gather}
\pa_t u_i +\pa_x J_i=0, \label{vDPCM.dens}\\
\quad\text{with }\quad J_i=-\sigma_i(v_i)\pa_x \xi_i,\quad \xi_i=z_i v_0 +v_i,\label{vDPCM.Ons}\\
-\lambda^2\pa_{xx}v_0=u_0, \label{vDPCM.pot}\\
\quad\text{with }\quad u_0=\sum_{i=1,2} z_iu_i+\rho_{hl}.
\end{gather}
\end{subequations}
The boundary conditions are defined on $\Gamma\times(0,+\oo)$ with $\Gamma\in \{0,1\}$ by:
\begin{subequations}\label{vDPCM.CL}
\begin{gather}
J_i\cdot \nu^\G=r_i^\G(v_i)g_i^\G(\xi_i-\xi_i^{\G}),\label{vDPCM.CL.dens}\\
\lambda^2 \pa_x v_0 \cdot \nu^\G +\beta^\G v_0=f^\G .\label{vDPCM.CL.pot}
\end{gather}
\end{subequations}
Moreover, as it has been explained in the introduction, we may consider the following relations between the densities and the chemical potentials:
\be\label{vi}
v_1=\displaystyle \log\lt(\dfrac{u_1}{\ov{u}_1-u_1}\rt)\quad \text{ and }\quad v_2=\log\lt(\dfrac{u_2}{\ov{u}_2}\rt),
\ee
or in an equivalent manner:
\be\label{ui}
u_1=\ov{u}_1\dfrac{e^{v_1}}{1+e^{v_1}}\quad \mbox{ and }\quad u_2=\ov{u}_2 e^{v_2}.
\ee
This corresponds to a Blakemore statistics for the cations and to a Boltzmann statistics for the electrons.

Each mobility $\sigma_i$ for $i=1,2$ has been given in~\eqref{eq:vacancy_diff} and~\eqref{eq:band_cond} as a function of $u_i$, but it can finally be considered  as a function of $v_i$ as it will be detailed below. Similarly, the function $r_i^\Gamma$ appearing in the boundary conditions introduced in Section~\ref{sec.DPCMvDPCM} can be written as functions of the chemical and electro-chemical potentials of the form~\eqref{vDPCM.CL.dens}.

\subsection{Assumptions on the data}
We give here in details the hypotheses we assume throughout the paper.
\vskip 1mm
\begin{enumerate}[(H$_1$)]\itemsep = 1mm
\item  $\lambda>0$ and for $\G\in \{0,1\}$, $\beta^\G>0$ and $f^\G \in \mathbb{R}$.
\item The densities are related to the chemical potentials through 
\[u_i={\ov u}_i e_i(v_i)\quad \mbox{ for }i=1,2,\]
where the functions $e_i$ are defined on $\R$ by
\[
e_1(z)= \dfrac{e^z}{1+e^z}\quad\ \mbox{ and }\ \quad e_2(z)=e^z.
\]
\item The mobilities are related to the chemical potentials through $\sigma_i(v_i)=d_i{\ov u}_i e'_i(v_i)$ with $d_i>0$ for $i=1,2$. This means that 
\[
\sigma_1(z)=d_1{\ov u}_1 \dfrac{e^z}{(1+e^z)^2}\quad\ \mbox{ and }\ \quad \sigma_2(z)= d_2{\ov u}_2e^z.
\]
\item The positive functions $r_i^\G:\R\rw (0,+\oo)$, for $i=1,2$ and $\G\in\{0,1\}$, are defined by
\[
\lt\{\begin{array}{ll}
\displaystyle
r^\G_1(v_1)=\kappa^\G_1 {\ov u}_1\dfrac{e^{\dfrac{v_1}{2}}}{1+e^{v_1}},\ \G\in \{0,1\},\\[4mm]
\displaystyle
r^0_2(v_2)=\kappa^0_2 \sqrt{{\ov u}_2} e^{\dfrac{v_2}{2}},\\[4mm]
\displaystyle
r^1_2(v_2)=\kappa^1_2 {\ov u}_2 e^{v_2},
\end{array}\rt.
\]
with $k_i^\G$ positive constants. The functions $g_i^\G$ for $i=1,2$ and $\G \in \{0,1\}$ are respectively defined by 
\be\label{eq:g_i.def}
\lt\{\begin{array}{ll}
\displaystyle
g^\G_1(y)=\sinh\lt(\dfrac{y}{2}\rt),\ \G\in\{0,1\},\\[4mm]
\displaystyle
g^0_2(y)=\sinh\lt(\dfrac{y}{2}\rt),\\[4mm]
\displaystyle
g^1_2(y)=1-e^{-y}.
\end{array}\rt.
\ee
\rd 
Note that all the functions $g_i$ are increasing and vanish at $0$ so that~\eqref{eq:g_i} holds true. 
\bk
\item \rd The initial profiles $u_i^\textrm{in}$, $0 \leq i \leq 2$, are such that 
\[
u_0^\textrm{in}=\displaystyle \sum_{i=1,2}z_i u_i^\textrm{in}+~\rho_\textrm{hl}, 
\]
and such that the corresponding chemical potentials are bounded, \textit{i.e.}, 
\[
v_i^\textrm{in} = e_i^{-1}\lt(\dfrac{u_i^\textrm{in}}{\ov u_i}\rt) \in L^\oo(0,1), \quad \mbox{ for }i = 1,2. 
\]
This is equivalent to requiring that $u_1^\textrm{in}$ is bounded away from $0$ and $\ov u_1$, 
whereas $u_2^\textrm{in}$ is bounded away from $0$. 
\bk
\end{enumerate}

\subsection{Notion of solution}
In order to give a variational formulation to our system, we introduce the spaces \[V=H^1(0,1)\times H^1(0,1)\times H^1(0,1)\quad \text{ and }\quad H=H^1(0,1)\times L^2(0,1)\times L^2(0,1),\]
equipped with their standard norms. \rd The system is to be regarded as an initial boundary value problem for the main unknown vector $(v_0, v_1, v_2)$ of potentials and the corresponding dual vector $(u_0, u_1, u_2)$ of densities. \bk Then we introduce the operators $E:H \rw H^*$ and $A:H \times V\rw V^*$ (where $X^*$ is the topological dual space of $X$) defined by:
\[\label{opEA}
\begin{array}{ll}
\displaystyle
\lt< Ew,\tilde{v}\rt>=\int_0^1 \lt(\lambda^2\pa_x w_0\pa_x \tilde{v}_0 +\sum_{i=1,2}{\ov u}_i e_i(w_i)\tilde{v}_i\rt) dx +\sum_{\G\in \{0,1\}}\lt[ (\beta^\G w_0-f^\G)\tilde{v}_0\rt](\G),\\[6mm]
\displaystyle
\lt< A(w,v),\tilde{v}\rt>= \sum_{i=1,2}\int_0^1 \sigma_i( w_i) \pa_x \xi_i \pa_x \tilde{\xi}_i dx +\sum_{i=1,2}\sum_{\G\in \{0,1\}}\lt[ r_i^\G(w_i)g_i^\G(\xi_i-\xi_i^{\G})\tilde{\xi}_i\rt](\G),
\end{array}
\]
where $w=(w_0, w_1, w_2)\in H$,  $v=(v_0, v_1, v_2)\in V$,  $\tilde{v}=(\tilde{v}_0, \tilde{v}_1, \tilde{v}_2)\in V$,    
\[\xi_i=z_i v_0+v_i\quad \text{ and }\quad \tilde{\xi}_i= z_i \tilde{v}_0 +\tilde{v}_i,\ \mbox{ for } i=1,2.\]
Then, the weak formulation of problem~\eqref{vDPCM}, \eqref{vDPCM.CL} reads
\be\label{eq:P}\tag{$P$}
\lt\{\begin{array}{ll}
\displaystyle
\text{Find }(u,v) \text{ such that}\\[3mm]
\displaystyle
u \in H^1_\textrm{loc}(\mathbb{R}_+;V^*),\quad  v \in L^2_\textrm{loc}(\mathbb{R}_+;V) \cap L^{\oo}_\textrm{loc}(\mathbb{R}_+\times [0,1]),\\[3mm]
\displaystyle
\dot u(t)+A(v(t),v(t))=0, \quad u(t)=Ev(t),\mbox{ for a.e. } t\in \R_+, \quad u(0)=u^{\text{\rm in}}.
\end{array}\rt.
\ee
\begin{oss}\label{rmk:charge}
Standard arguments show that if $(u,v)$ is a pair of smooth functions, then $(u,v)$ solves~\eqref{eq:P} if and only if it satisfies~\eqref{vDPCM}--\eqref{ui}. More precisely, on the one hand, the condition $u=Ev$ expresses both the relations $u_i=\bar u_i e_i(v_i)$ for $i=1,2$ as well as the boundary value problem~\eqref{vDPCM.pot}, \eqref{vDPCM.CL.pot} relating the electrostatic potential $v_0$ to the charge density. On the other hand, $\dot u + A(v,v)=0$ is a weak formulation of the advection diffusion equations~\eqref{vDPCM.dens}, \eqref{vDPCM.Ons}, \eqref{vDPCM.CL.dens}. Moreover, if $(u,v)$ solves~\eqref{eq:P}, testing $\dot u+A(v,v)=0$ against the functions of the form $\tilde w=(y_0,-z_1 y_0, -z_2y_0)$ for $y_0\in C^1_c(0,1)$, we obtain
\[
\partial_t\lt(u_0-\sum_{i=1,2}z_i u_i\rt)=0\quad\text{in }\d'(\R_+\times (0,1)).
\]
Using the initial condition, we get that any solution $(u,v)$ to~\eqref{eq:P} satisfies
\[
u_0=\sum_{i=1,2}z_i u_i + \rho_\text{\rm hl}.
\]
\end{oss}
The aim of this paper is to prove an existence result for problem~\eqref{eq:P}:
\begin{teo}\label{teoex}
Under assumptions~{\rm(H$_1$)--(H$_5$)}, there exists at least one solution to problem~\eqref{eq:P}.
\end{teo}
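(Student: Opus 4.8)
The plan is to follow the strategy pioneered by Gajewski and Gröger for drift-diffusion-Poisson systems, adapted to the corrosion boundary conditions. The proof proceeds by a vanishing-regularization argument. First I would study the truncated problem~\eqref{eq:PM}, in which the nonlinearities $e_i$, $\sigma_i$, $r_i^\G$ and $g_i^\G$ are modified for $|v_i|\ge M$ so as to become globally Lipschitz, with $\sigma_i$ bounded below by a positive constant and the $e_i$ still strictly increasing. The key structural observation is that, because $u=Ev$ with $E$ the (sub)differential of a strictly convex free-energy functional $\mathcal E$ (the sum of $\frac{\lambda^2}{2}\|\partial_x v_0\|_2^2$, the boundary capacitor terms, and the Legendre duals of the $e_i$), the map $v\mapsto Ev$ is a bounded, hemicontinuous, strictly monotone operator from $V$ to $V^*$, and $A(v,v)$ is monotone in the relevant variable on the truncated level. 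Existence for~\eqref{eq:PM} is then obtained in Proposition~\ref{expm} by a Galerkin/Faedo method or by the theory of doubly-nonlinear evolution inclusions of the form $\frac{d}{dt}Ev + A(v,v)=0$, using the a priori estimates of Section~\ref{sec.apriori}.

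The central a priori estimate is the free-energy (entropy) inequality: testing the density equations against $\xi_i=z_iv_0+v_i$ and the Poisson equation against $\partial_t v_0$, the coupling terms telescope and one obtains
\[
\frac{d}{dt}\mathcal E(v(t)) + \sum_{i}\int_0^1\sigma_i(v_i)|\partial_x\xi_i|^2\,dx + \sum_{i,\G}\big[r_i^\G(v_i)\,g_i^\G(\xi_i-\xi_i^\G)(\xi_i-\xi_i^\G)\big](\G) \le 0,
\]
where the boundary terms are nonnegative by~\eqref{eq:g_i}. Integrating in time gives a uniform bound on $\mathcal E(v(t))$ and hence, since $\mathcal E$ controls $\|v_0\|_{H^1}$ and the $L\log L$-type norms of the densities, together with a dissipation bound $\int_0^T\!\!\int_0^1\sigma_i|\partial_x\xi_i|^2 < \infty$. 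These bounds are uniform in the regularization parameter $M$. From the dissipation bound, Poincaré-type inequalities, and the relation $v_i=\xi_i-z_iv_0$, one also extracts bounds on $v$ in $L^2_{\mathrm{loc}}(\R_+;V)$ and on $\dot u$ in $L^2_{\mathrm{loc}}(\R_+;V^*)$.

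The main obstacle—and the genuinely new ingredient—is upgrading the energy bounds to the $L^\infty$ bound on the chemical potentials $v_i$ required by the solution concept in~\eqref{eq:P}, uniformly in $M$. This is where the Moser iteration of Section~\ref{sec.bounds} enters: testing the (truncated) equation for $u_i$ against powers $|(\xi_i-\ell)_+|^{p-1}$ or against functions of $v_i$, and exploiting that $\sigma_i(v_i)e_i'(v_i)^{-1}$ behaves well and that the boundary reaction terms have a sign, one derives a recursive inequality on $L^{p}$-norms that closes to yield an $M$-independent $L^\infty$ bound once $M$ is chosen large enough. Care is needed because the cation flux in~\eqref{eq:J_1} has a nonlinear convective part and the Poisson right-hand side $u_0$ depends on the (a priori only integrable) densities, so one must bootstrap: the energy bound gives $v_0\in L^\infty_{\mathrm{loc}}(\R_+;H^1)\hookrightarrow L^\infty$ in one space dimension, which feeds the Moser scheme for $v_1,v_2$. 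Once these uniform bounds are in hand, I would pass to the limit $M\to\infty$: the $L^\infty$ bound makes the truncations inactive, weak-$*$ and weak convergence of $v_M$ in the relevant spaces plus the Aubin–Lions compactness coming from the $\dot u$ bound give strong convergence of $u_M$, hence pointwise convergence sufficient to pass to the limit in the monotone operators $E$ and $A$ (using Minty's trick for the nonlinear terms), yielding a solution of~\eqref{eq:P} and proving Theorem~\ref{teoex}.
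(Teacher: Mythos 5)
Your overall plan tracks the paper's strategy (truncate, obtain a free-energy/dissipation estimate, run a Moser iteration to get $M$-independent $L^\oo$ bounds on the chemical potentials, then observe that for large $M$ the truncation is inactive), but there is a genuine gap in the way you state and use the free-energy estimate. When you test $\dot u + A(v,v)=0$ against $v$ (equivalently, test the continuity equations against $\xi_i$), the boundary contribution that appears is $\sum_{i,\G}\bigl[r_i^\G(v_i)\,g_i^\G(\xi_i-\xi_i^\G)\,\xi_i\bigr](\G)$, \emph{not} $\sum_{i,\G}\bigl[r_i^\G(v_i)\,g_i^\G(\xi_i-\xi_i^\G)\,(\xi_i-\xi_i^\G)\bigr](\G)$. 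The extra piece $-g_i^\G(\xi_i-\xi_i^\G)\,\xi_i^\G$ has no sign, so $\frac{d}{dt}\mathcal E(v(t))\le 0$ does not hold as you wrote it, and you cannot directly infer a bound on $\mathcal E(v(t))$. The paper handles this by introducing a total free energy $\P^{\textrm{tot}}(t)=\P(u(t))+\sum_\G\P^\G(t)$ that books the energy $\xi_i^\G$ carried away by species leaving through the boundaries; $\P^{\textrm{tot}}$ is indeed nonincreasing by~\eqref{eq:g_i}, but $\P^{\textrm{tot}}$ itself can be negative, so one still has to convert this into a bound on $\P(u(t))$. This is done by testing against the affine functions $\hat\xi_i(x)=\xi_i^0+x(\xi_i^1-\xi_i^0)$, absorbing terms via Young/Fenchel, and applying a (discrete) Gronwall lemma; the resulting bound $\P(u(t))\le c_T$ is local in time and grows with $T$, not uniform on $\R_+$ as your phrasing suggests. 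Without this intermediate work, the claims "the boundary terms are nonnegative" and "Integrating in time gives a uniform bound on $\mathcal E(v(t))$" do not follow.

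Two smaller remarks. First, for the solvability of~\eqref{eq:PM} the paper does not invoke a doubly-nonlinear abstract existence theorem or a Faedo--Galerkin scheme; it discretizes in time (implicit Euler on a partition $I_n^j$), solves each step by strong monotonicity of $v\mapsto \frac1{k_n}E_M v+A_M(y,v)$ (using the Lipschitz truncations $g_i^{\G,\mu}$ and the lower bound $\sigma_i(T_M\cdot)\ge c_M$), and passes $n\to\oo$ by (nonlinear Aubin--Simon) compactness — so your "or by the theory of doubly-nonlinear evolution inclusions" would need to be made concrete, and the choice matters because the time discretization is also where the discrete Gronwall step above is carried out. Second, your last paragraph proposes passing to the limit $M\to\oo$ with Minty's trick; this is superfluous once you have $M$-independent $L^\oo$ bounds on $v_1,v_2$: for $M$ larger than those bounds (and than the initial data), the truncations $T_M$, $e_i\circ T_M$, $\sigma_i\circ T_M$, $r_i^\G\circ T_M$, $g_i^{\G,\mu}$ are identities on the range of the solution, so a fixed solution of~\eqref{eq:PM} is already a solution of~\eqref{eq:P} — no limit passage is needed.
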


\section{Free energy and a priori estimates}\label{sec.apriori}
Before going into the details of the proof of Theorem~\ref{teoex}, we give in this section an explicit expression for the total free energy, assuming that a solution to problem~\eqref{eq:P} exists. \rd Then we show that our model is thermodynamically consistent in the sense that the free energy of a solution decreases over time. Furthermore, this leads to some \textit{a priori} estimates for the solution which turn to be crucial in the proof.
\bk  
To this end we follow some ideas from~\cite{GG_1996}. Define 
\[
\displaystyle\f_i(v)=\int_0^v e_i(y)dy\quad \textrm{ for }v \in \mathbb{R}, \; i =1,2,
\]
and 
\[
\p_i(u)=\int_{e_i(0)}^u e^{-1}_i(z)dz\quad \textrm{for $u \in [0,\ov u_1]$ if $i=1$ and $u\ge 0$ if $i  = 2$.}
\]
 For the specific choice of nonlinearities of (H$_2$), this provides
\[
\begin{cases}
\f_1(v)=\log(1+e^v)-\log 2,  \\ 
\p_1(u)=u\log u+(1-u) \log(1-u)+\log 2,\end{cases}
\qquad\text{and}\qquad
\begin{cases}
\f_2(v)=e^v-1,\\
\p_2(u)=u\log u -u +1.
\end{cases} 
\]
The non-negative convex functions $\p_i$ satisfy $\psi_1(1/2)=\psi_2(1)=0$ and are extended by $+\oo$ outside of their domain of definition.

We define the Landau free energy of $v \in V$ by
\[
\F(v)=\int_0^1 \lt(\dfrac{\lambda^2}{2}|\pa_x v_0|^2+ \sum_{i=1,2} {\ov u}_i \f_i(v_i)\rt)dx + \sum_{\G\in \{0,1\}}\lt(\dfrac{\beta^\G}{2}v_0^2-f^\G v_0\rt)(\G).
\]
Its conjugate, the Helmholtz free energy, is defined for $u \in V^*$ by
\[
\P(u)=\sup_{v\in V}\lt\{\lt<u,v\rt>-\F(v)\rt\},
\]
where $\lt<\, ,\, \rt>$ denotes the standard scalar product in $L^2(0,1)$.

Note that if $\f_i(v_i)$, $i=1,2$, are not in $L^1$, the values $\F(v)$ and $\P(u)$ are interpreted as $+\oo$. Moreover, $\F$ and $\P$ turn out to be strictly convex functionals and $\F(0)=0$, hence for every $v \in V$ the subdifferential of $\F$ contains at most one element. More precisely one checks that $\pa \F=\{Ev\}$.
Then standard computations show that
\be\label{FE}
\P(u)=\int_0^1 \lt(\dfrac{\lambda^2}{2}|\pa_x v^*_0|^2+\sum_{i=1,2} {\ov u}_i \psi_i\lt(\dfrac{u_i}{{\ov u}_i}\rt)\rt)dx +\sum_{\G\in \{0,1\}}\lt[ \dfrac{\beta^\G}2|v^*_0|^2\rt](\G),
\ee
with $v_0^*$ solving the Poisson equation with Robin-Fourier boundary conditions
\[
\lambda^2\int_0^1 \pa_x v^*_0 \pa_x \tilde{v}\,dx + \sum_{\G\in \{0,1\}}\lt[\lt({\rd\beta^\G}v^*_0-f^\G \rt)\tilde{v}\rt](\G)=\int_0^1 u_0 \tilde{v}\,dx, \quad \forall \tilde{v} \in V.
\]

\rd
The Helmholtz free energy of an isolated system is expected to be a Lyapunov functional. Here, we have fluxes across the interfaces $\G \in \{0,1\}$ which may contribute positively to the variations of $\P(u)$. In order to get an isolated system (and hence a  Lyapunov functional), then one introduces some very elementary model for the charge carriers leaving the oxide. More precisely, we assign the energy $\xi_1^0$ (resp. $\xi_1^1$) to each unit of cations entering the solution (resp. the metal) from the oxide. Similarly the energy of one unit of electrons leaving the oxide is set to $\xi_2^\Gamma$, $\Gamma \in \{0,1\}$.  Therefore the free energy associated to elements leaving the oxide layer to the solution and the metal are respectively defined by 
\[
\P^\G(t) = \sum_{i=1,2}\int_0^t \sum_{\G\in \{0,1\}}\lt[ (J_i\cdot \nu^\G)\xi_i^\G\rt](\G) d\tau, 
\qquad \G \in \{0,1\}.
\]
Finally, 
the total free energy $\P^\textrm{tot}$ is  given by
\be\label{psitot}
\P^\textrm{tot}(t)= \P(u(t)) + \sum_{\G\in \{0,1\}} \P^\G(t).
\ee

We prove in the following proposition the decay of the total free energy $\P^\textrm{tot}$ over time. This estimate, which encodes the second principle of thermodynamics, is the key \textit{a priori} estimate on which our analysis builds. Here and in what follows, the space $H^1(0,1)$ is equipped with the norm 
\be\label{eq:H1}
{\|w\|}_{H^1(0,1)} = \lt(\dfrac12 \int_0^1 |\pa_x w|^2 dx + \dfrac12\sum_{\G \in \{0,1\}} |w(\G)|^2 \rt)^{1/2}.
\ee
\bk
\begin{prop}\label{subd}
\rd 
Assume that~{\rm(H$_1$)--(H$_5$)} hold and let $(u,v)$ be a solution of problem~\eqref{eq:P}. For $0\le s\le t$,
\begin{align*}
\P^\textrm{tot}(t)-\P^\textrm{tot}(s)=&-\sum_{i=1,2}\int_s^t\int_0^1 \sigma_i(v_i) |\pa_x \xi_i|^2 dx d\tau\\[3mm]
 &-\sum_{i=1,2}\int_s^t\sum_{\G\in \{0,1\}}\lt[ r_i^\G(v_i)g_i^\G(\xi_i-\xi_i^{\G})(\xi_i-\xi_i^{\G})\rt](\G) d\tau. 
\end{align*}
\rd In particular, there holds \bk 
\be\label{pdec}
\P^\text{\rm tot}(t)\le \P^\text{\rm tot}(s)\le \P(0)< \oo.
\ee
As a consequence, {\rd there exists $c_T>0$ depending on the data of the continuous problem and on $T$ such that}
\be\label{vulu}
\|v_0\|_{L^{\oo}((0,T); H^1(0,1))}+\sum_{i=1,2} \rd \lt\| \psi_i\lt(\dfrac{u_i}{\ov u_i}\rt)\rt\|_{L^{\oo}((0,T); L^1(0,1)) \bk }\le c_T.
\ee
In particular this implies that $u_i/\ov u_i$ belongs to the domain of $\psi_i$ for almost every $(x,t)$, \textit{i.e.} $0 \leq u_1 \leq \ov u_1$ and $u_2 \geq 0 $.
\end{prop}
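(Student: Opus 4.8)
The plan is to differentiate the total free energy along a solution and identify the dissipation terms. The starting point is the observation (recorded before the statement) that $\partial\F = \{Ev\}$, so that $u(t) = Ev(t)$ means $v(t)$ is precisely the (unique) element realizing the Legendre duality between $\F$ and $\P$; hence the chain rule for convex conjugates gives, formally, $\frac{d}{dt}\P(u(t)) = \langle \dot u(t), v(t)\rangle$. Combined with the evolution equation $\dot u(t) = -A(v(t),v(t))$, this yields $\frac{d}{dt}\P(u(t)) = -\langle A(v(t),v(t)), v(t)\rangle$. Unfolding the definition of $A$ with $\tilde v = v$, so that $\tilde\xi_i = \xi_i$, produces exactly the bulk term $-\sum_i \int_0^1 \sigma_i(v_i)|\partial_x\xi_i|^2\,dx$ together with the boundary term $-\sum_i\sum_\G [r_i^\G(v_i)g_i^\G(\xi_i-\xi_i^\G)\xi_i](\G)$. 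The remaining piece is to absorb the outer-reservoir contributions: by definition $\frac{d}{dt}\P^\G(t) = \sum_i\sum_\G[(J_i\cdot\nu^\G)\xi_i^\G](\G)$, and since the weak formulation identifies $J_i\cdot\nu^\G$ with $r_i^\G(v_i)g_i^\G(\xi_i-\xi_i^\G)$ on $\G$, adding these to the boundary term of $\frac{d}{dt}\P(u(t))$ replaces $\xi_i$ by $\xi_i - \xi_i^\G$ inside the bracket, giving the claimed identity. Integrating in time from $s$ to $t$ finishes the equality.

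The main technical obstacle is justifying the chain rule $\frac{d}{dt}\P(u(t)) = \langle\dot u(t), v(t)\rangle$ rigorously, since $\P$ is only a convex functional on $V^*$ (valued in $(-\infty,+\infty]$), $u$ is merely $H^1_\textrm{loc}(\R_+;V^*)$, and $v$ is only $L^2_\textrm{loc}(\R_+;V)$ — so pointwise-in-time evaluations and the naive product rule are not automatic. The standard remedy (following the Brézis–Komura / Alt–Luckhaus type integration-by-parts formula for convex functionals, as used in \cite{GG_1996}) is: first establish that $t\mapsto\P(u(t))$ is absolutely continuous on $[0,T]$ and that for a.e. $t$ one has $\frac{d}{dt}\P(u(t)) = \langle\dot u(t),w\rangle$ for \emph{any} $w\in\partial\P^*(u(t)) = \partial\F^{-1}$-type selection; here uniqueness of the subgradient ($\partial\F=\{Ev\}$ with $\F$ strictly convex, lower semicontinuous) pins down $w = v(t)$. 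One should check the hypotheses of that abstract lemma: $\F$ is proper, convex, l.s.c. and coercive on $V$ (coercivity of the quadratic $v_0$-part in $H^1$ via the norm \eqref{eq:H1}, and $\f_i\ge$ affine lower bounds for the $v_i$-parts), $E = \partial\F$ is the associated maximal monotone operator, and $u=Ev$ with the stated regularity places us in its scope. A brief verification that the boundary integrals in $\langle A(v,v),v\rangle$ and in $\frac{d}{dt}\P^\G$ are finite for a.e.\ $t$ (so the manipulation is not $\infty-\infty$) should also be included, using that $v\in L^2_\textrm{loc}(\R_+;V)\cap L^\infty_\textrm{loc}$ controls the traces.

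For the consequences: inequality \eqref{pdec} is immediate from the identity, because the right-hand side is a sum of two nonpositive terms — the bulk term is $\le 0$ since $\sigma_i\ge 0$ by (H$_3$), and the boundary term is $\le 0$ since $y\,g_i^\G(y)\ge 0$ by \eqref{eq:g_i} (noted after (H$_4$)) and $r_i^\G>0$ by (H$_4$) — and $\P^\textrm{tot}(0)=\P(u^\textrm{in})=\P(0)<\infty$ by (H$_5$) (finiteness of $\psi_i(u_i^\textrm{in}/\bar u_i)$ in $L^1$). Finally \eqref{vulu} follows by reading off \eqref{FE}: since $\psi_i\ge 0$ and the quadratic form $\frac{\lambda^2}{2}\int_0^1|\partial_x v_0^*|^2 + \sum_\G\frac{\beta^\G}{2}|v_0^*|^2(\G)$ is, up to constants depending on $\lambda$ and $\min_\G\beta^\G$, equivalent to $\|v_0^*\|_{H^1(0,1)}^2$ for the norm \eqref{eq:H1} — after using $\langle u_0,v_0^*\rangle \le \|u_0\|_{V^*}\|v_0^*\|_V$ and Young's inequality, or simply noting $v_0^*=v_0$ by Remark \ref{rmk:charge} — the bound $\P(u(t))\le c_T$ controls both $\|v_0\|_{H^1}^2$ and $\|\psi_i(u_i/\bar u_i)\|_{L^1}$ uniformly on $[0,T]$. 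That $u_i/\bar u_i$ lies in $\operatorname{dom}\psi_i$ a.e.\ is then forced by $\psi_i(u_i/\bar u_i)\in L^1$ together with the convention $\psi_i=+\infty$ off its domain, i.e.\ $0\le u_1\le\bar u_1$ and $u_2\ge 0$.
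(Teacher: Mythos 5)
Your proof of the energy identity and of \eqref{pdec} is in substance the paper's own argument: the convex-duality step $u=Ev \iff v\in\partial\P(u)$, the resulting $\P(u(t))-\P(u(s))=\int_s^t\langle\dot u,v\rangle\,d\tau=-\int_s^t\langle A(v,v),v\rangle\,d\tau$, the unfolding of $A$, and the absorption of the reservoir terms $\P^\G$ so that $\xi_i$ becomes $\xi_i-\xi_i^\G$. Your discussion of the chain-rule justification (via a Br\'ezis--Komura / Alt--Luckhaus type lemma) is more explicit than the paper's single line, which is fine.

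However, there is a genuine gap in your derivation of \eqref{vulu}. You pass directly from \eqref{pdec}, i.e.\ $\P^\textrm{tot}(t)\le\P(0)$, to invoking ``the bound $\P(u(t))\le c_T$'' as if it were already established. But $\P^\textrm{tot}(t)=\P(u(t))+\sum_\G\P^\G(t)$, and the reservoir contributions $\P^\G(t)=\sum_i\int_0^t[(J_i\cdot\nu^\G)\xi_i^\G](\G)\,d\tau$ are sign-indefinite and \emph{a priori} unbounded from below; in particular $\P^\textrm{tot}(t)\le\P(0)$ does \emph{not} imply $\P(u(t))\le\P(0)$. This is precisely the difficulty the paper flags (``Since $\P^{\text{tot},j}$ can be negative due to the boundary flux contributions, some further work \dots is needed''). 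The paper closes this gap by testing the evolution equation against the affine profiles $\hat\xi_i(x)=\xi_i^0+x(\xi_i^1-\xi_i^0)$, re-expressing $-\sum_\G\P^\G(t)$ in terms of $\int_0^1(u_i-u_i^\textrm{in})\hat\xi_i\,dx$ plus a dissipation-type term, controlling these via Young--Fenchel and Young inequalities (using that $\sigma_1$ is bounded and $\sigma_2\le\p_2(u_2)+\f_2(d_2)$), and then applying a Gronwall lemma to obtain $\P(u(t))\le c_T$ for $t\in[0,T]$. (In the paper this is carried out in detail in the time-discrete setting in Section~\ref{sec.PM}, see~\eqref{eq:Jjn}--\eqref{eq:PMn.4}, and only referenced in the continuous proof.) Without that Gronwall step your argument for \eqref{vulu} is incomplete: the quadratic $v_0$-coercivity and the nonnegativity of $\p_i$ in \eqref{FE} are used exactly as you say, but they require the input $\P(u(t))\le c_T$, which you have not proved.

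One small correct observation worth keeping: your remark that $v_0^*=v_0$ along a solution (by Remark~\ref{rmk:charge} together with $u=Ev$) is the right way to identify the quantity controlled in \eqref{FE} with $v_0$ itself.
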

\begin{proof}
Let $(u,v)$ be a solution to problem~\eqref{eq:P}. Then for almost every $t \in \mathbb{R}_+$, \[u(t)=Ev(t)\in \pa \F(v(t)),\] which is equivalent to 
\[v(t)\in \pa \P(u(t))\] 
\rd since $\F$ and $\P$ are the Legendre transform of each other. \bk
Thus, for $0\le s\le t$,
\[\P(u(t))-\P(u(s))=\int_s^t \lt< \dot u(\tau),v(\tau)\rt> d\tau=-\int_s^t \lt< A(v(\tau),v(\tau)),v(\tau)\rt> d\tau\]
and consequently \rd due to the definition~\eqref{psitot} of $\P^\textrm{tot}$ \bk
\begin{align}\label{decen}
\P^\textrm{tot}(t)-\P^\textrm{tot}(s)
=&-\sum_{i=1,2}\int_s^t\int_0^1 \sigma_i(v_i) |\pa_x \xi_i|^2 dx d\tau 
\\
&-\sum_{i=1,2}\int_s^t\sum_{\G\in \{0,1\}}\lt[r_i^\G(v_i)g_i^\G(\xi_i-\xi_i^{\G})(\xi_i-\xi_i^{\G})\rt](\G)d\tau\le 0, \nonumber
\end{align}
\rd thanks to~\eqref{eq:g_i}. \bk
Also note that $\P^\textrm{tot}(t)$ is finite for every $t \in \mathbb{R}_+$ when $(u,v)$ is a solution of~\eqref{eq:P} \rd since $\P^\textrm{tot} (0)= \P(u^\textrm{in})$ is finite thanks to Assumption~(H$_5$), \bk then we obtain the validity of~\eqref{pdec}.
{\rd Then one readily checks (calculations are detailed in the time discrete case in Section~\ref{sec.PM}, see~\eqref{eq:PMn.4}) that 
the Helmholtz free energy corresponding to the oxide layer only remains bounded, but not uniformly w.r.t. time, \textit{i.e.} 
$\P(u(t)) \le c_T$ for $t \in [0,T]$.}
This implies~\eqref{vulu} in view of~\eqref{FE}.
\end{proof}

\section{Existence result for a regularized problem~\eqref{eq:PM}}\label{sec.PM}
In order to obtain the existence result for problem~\eqref{eq:P} stated in Theorem~\ref{teoex}, we first introduce a regularized problem. It is denoted by~\eqref{eq:PM} and it is obtained from~\eqref{eq:P} by cutting off all the nonlinearities \rd applied to the chemical potentials $v_1,v_2$ \bk at a certain level $M$. This section is devoted to the solvability of such a regularized problem, which is given in Proposition~\ref{expm}. Via a discretization of time, we construct a sequence of approximate solutions to problem~\eqref{eq:PM}. Then, accurate a priori estimates and compactness arguments ensure the existence of at least one solution to problem~\eqref{eq:PM}.

In the next section, for such a solution, several a priori estimates will be proved independently on the level $M$. Consequently, a solution to~\eqref{eq:PM} will be also a solution to~\eqref{eq:P} when choosing the level $M$ sufficiently large.

This technique was originally introduced in a series of seminal papers by Gajewski and Gr\"{o}ger~\cite{GG_1986,GG_1989, GG_1990}. The main differences with respect to those works consist in a different expression of the total free energy and in the presence of nonlinear Robin boundary conditions we have to deal with.\\
Let $M>0$ be a fixed \rd parameter  chosen large enough to ensure \bk
\be\label{M}
M\ge \max_{i=1,2}\|{\rd v}_i^{in}\|_{L^{\oo}(0,1)}.
\ee
We introduce the usual truncation function at level $M$ given by 
\[
T_M(z)= \rd \max(-M, \min(M,z)) = \bk 
\begin{cases}
~~~ z&\text {if }|z|\le M,\smallskip\\
~\pm M &\text{if }\pm z>M.
\end{cases}
\]
We {\rd also} define by $E_M: H \rw H^*$ and $A_M:H \times V\rw V^*$ the operators defined by 
\[
\lt< E_M v,\tilde{v}\rt> =\int_0^1 \lt(\lambda^2\pa_x v_0\pa_x \tilde{v}_0 +\sum_{i=1,2}{\ov u}_i e_i(T_M v_i)\tilde{v}_i\rt) \, dx+\sum_{\G\in \{0,1\}}\lt[(\beta^\G v_0-f^\G)\tilde{v}_0\rt](\G)\]
and
\begin{equation}
\lt< A_M(w,v),\tilde{v}\rt> = \int_0^1 \sum_{i=1,2}\sigma_i(T_M w_i) \pa_x \xi_i \pa_x \tilde{\xi}_i\,  dx  +\sum_{i=1,2}\sum_{\G\in \{0,1\}}\lt[r_i^\G(T_M w_i)g_i^{\G,\mu}(\xi_i-\xi_i^{\G})\tilde{\xi}_i\rt](\G), \label{eq:AM}
\end{equation}
where $\xi_i=z_i v_0+v_i$ and $\tilde{\xi}_i= z_i \tilde{v}_0 +\tilde{v}_i$, $i=1,2$.
{\rd For technical reasons that will appear later on in the proof, we also have to modify the nonlinear Robin boundary conditions~\eqref{eq:g_i.def}}. More precisely, for $i=1,2$ and $\mu>0$ (yet another parameter to be tuned later on), $g_i^{\G,\mu}$ denotes the following {\rd approximation of $g_i^\G$}:
\be\label{eq:giGmu}
g_i^{\G,\mu}(\xi)=
\begin{cases}
g_i^\G(\xi)&\text{if }|\xi|\le \mu,\medskip\\
g_i^\G(\pm \mu)+(\xi\mp\mu) (g^\G_i)'(\mu)&\text{if }\pm\xi> \mu.
\end{cases}
\ee
where the functions $g_i$ are the ones introduced in (H$_4$). {\rd The functions $g_i^{\G,\mu}$ turns out to be Lipschitz continuous functions coinciding with $g_i$ on the interval $[-\mu;\mu]$ and being linear outside of it.  Since  $g_1^{\G}$ and $g_2^0$ are even functions, $g_1^{\G,\mu}$ and $g_2^{0,\mu}$ belong to $C^1(\R)$, while $g_2^{1,\mu}$ is merely $C^{0,1}(\R)$.}  
Then our regularized problem writes
\be\label{eq:PM}\tag{$P_M$}
 \lt\{\begin{array}{ll}
\displaystyle
\text{Find }(u,v) \text{ such that}\\[3mm]
\displaystyle
u \in H^1_\textrm{loc}(\mathbb{R}_+;V^*),\quad  v \in L^2_\textrm{loc}(\mathbb{R}_+;V),\\[3mm]
\displaystyle
\dot u(t)+A_M(v(t),v(t))=0, \quad u(t)=E_Mv(t),\mbox{ for a.e. } t\in \R_+,\quad u(0)=u^{\text{\rm in}}.
\end{array}\rt.
\ee
The next result provides the existence of (at least) one solution, \rd still denoted by $(u,v)$, 
\bk to problem~\eqref{eq:PM}.

% ============== Proposition 4.1 ===================
\begin{prop}\label{expm}
Under assumptions {\rm(H$_1$)--(H$_5$)} and 
 if 
\be\label{cond_mu}
 \mu\le M - \max_{i=1,2} \lt( |z_i| \cter{cte:v0} + |\xi_i^\G|\rt)
\ee
with $\ctel{cte:v0}$ introduced hereafter in~\eqref{eq:bound.v0}, 
then there exists a solution $(u,v)$ to problem~\eqref{eq:PM}.
Moreover, $v_0$ satisfies
\be\label{eq:v0bound}
{\| v_0\|}_{L^\oo(I;H^1(0,1))}\leq \cter{cte:v0}
\ee
and there exists $\ctel{cte:v0.Lip}>0$ depending on  $T$ and on the data 
but not on $M$ such that 
\be\label{eq:v0.Lip}
{\| \pa_x v_0 \|}_{L^\oo((0,1) \times (0,T))} \le \cter{cte:v0.Lip}.
\ee
\end{prop}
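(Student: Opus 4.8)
The plan is to construct a solution to~\eqref{eq:PM} by the classical Rothe method: discretize time with step $\delta = T/N$, solve a sequence of elliptic problems, derive $M$-independent (and $M$-dependent) a priori bounds, and pass to the limit $N\to\oo$. For the time-discrete problem, at each step $n$ we seek $v^n = (v_0^n, v_1^n, v_2^n) \in V$ such that, with $u^n = E_M v^n$,
\[
\frac{1}{\delta}(u^n - u^{n-1}) + A_M(v^n, v^n) = 0 \quad \text{in } V^*.
\]
Existence of $v^n$ at each step follows from a monotonicity/surjectivity argument: the map $v \mapsto \frac1\delta E_M v + A_M(v,v)$ is, after composition with suitable coercivity estimates, a bounded, continuous, coercive, pseudo-monotone (indeed monotone up to the compact lower-order Robin terms, since $g_i^{\G,\mu}$ are Lipschitz and $e_i\circ T_M$, $\sigma_i\circ T_M$, $r_i^\G\circ T_M$ are bounded) operator on $V$; one invokes the Browder--Minty / Brezis theorem on pseudo-monotone coercive operators. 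Coercivity comes from the $\lambda^2\|\pa_x v_0\|^2$ term together with the boundary terms $\beta^\G |v_0(\G)|^2$ controlling $v_0$ in $H^1$, while the $v_i$ directions are controlled through $\sigma_i(T_M\cdot)\geq \sigma_i(-M) > 0$ in the diffusive part $\int \sigma_i(T_M v_i)|\pa_x\xi_i|^2$ combined with the already-controlled $v_0$; the key point is that $\sigma_1(-M) = d_1\ov u_1 e^{-M}/(1+e^{-M})^2$ is a strictly positive constant, so the cut-off restores uniform ellipticity in all three unknowns.

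Next, I would establish the discrete energy estimate, which is the discrete analogue of Proposition~\ref{subd}: testing the scheme against $v^n$ and using convexity of $\F$ (so that $\langle u^n - u^{n-1}, v^n\rangle \geq \F(v^n) - \F(v^{n-1})$ after accounting for the Robin energy) together with $y\,g_i^{\G,\mu}(y) \geq 0$ yields
\[
\F(v^n) + \sum_{n}\delta\Big(\sum_i \int_0^1 \sigma_i(T_M v_i^n)|\pa_x\xi_i^n|^2\,dx + \sum_{i,\G}[r_i^\G(T_M v_i^n)g_i^{\G,\mu}(\xi_i^n - \xi_i^\G)(\xi_i^n-\xi_i^\G)](\G)\Big) \le \text{data} + \sum_n \delta\,(\text{flux}\cdot\xi_i^\G \text{ terms}).
\]
These are equations~\eqref{eq:PMn.4} referenced in the statement. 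From this one extracts, uniformly in $M$ and $N$: the bound $\|v_0^n\|_{H^1}\leq \cter{cte:v0}$, which is~\eqref{eq:v0bound} — this is where $\cter{cte:v0}$ is defined, via a Poisson-estimate argument as in~\eqref{eq:bound.v0} — and the $L^1$ bounds on $\psi_i(u_i/\ov u_i)$. The $L^\oo(H^1)$ bound on $v_0$ then upgrades to the Lipschitz bound~\eqref{eq:v0.Lip} on $\pa_x v_0$: since $-\lambda^2\pa_{xx}v_0 = u_0 = z_1 u_1 + z_2 u_2 + \rho_\textrm{hl}$ and the densities $u_i = \ov u_i e_i(T_M v_i^n)$ are bounded in $L^\oo$ (because $u_1 \in [0,\ov u_1]$, $u_2 = \ov u_2 e^{v_2}$ — here one uses that $\psi_2$ controls $u_2\log u_2$, giving at least $L^1$, and then a bootstrapping through the 1D Poisson equation and the Robin boundary data gives $\pa_x v_0 \in L^\oo$ on the cylinder), the right-hand side of the Poisson equation lies in $L^\oo((0,1)\times(0,T))$ uniformly, hence $\pa_{xx}v_0$ is in that space and $\pa_x v_0$ is bounded. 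In one space dimension this is elementary once the $L^\oo$ bound on $u_0$ is in hand.

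Finally, passing to the limit $N \to \oo$: define the piecewise-constant and piecewise-affine interpolants $\bar v^{(N)}$, $\hat u^{(N)}$; the energy estimate gives uniform bounds in $L^2(0,T;V)$ for $v^{(N)}$ and in $H^1(0,T;V^*)$ for $\hat u^{(N)}$ (using $\dot{\hat u}^{(N)} = -A_M(\bar v^{(N)},\bar v^{(N)})$ and boundedness of $A_M$ thanks to the cut-offs), so by Aubin--Lions–type compactness $u^{(N)}$ converges strongly in $L^2(0,T;H^{-s})$ and a.e., while $v^{(N)} \rightharpoonup v$ weakly in $L^2(0,T;V)$. The cut-offs $T_M$ make $e_i\circ T_M$, $\sigma_i\circ T_M$, $r_i^\G\circ T_M$ Lipschitz and bounded, and $g_i^{\G,\mu}$ Lipschitz, so the nonlinear terms pass to the limit (the relation $u = E_M v$ is stable under weak convergence because it is affine-plus-Nemytskii with continuous bounded nonlinearity, and $A_M(v,v)$ converges by a standard Minty-type or Browder–Minty monotonicity trick combined with the strong convergence of $v_0$ in $H^1$ coming from~\eqref{eq:v0.Lip} and compactness). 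Uniqueness of the Poisson problem identifies $v_0$; a standard argument recovers the initial condition $u(0) = u^\textrm{in}$ from the $H^1_\textrm{loc}(\R_+;V^*)$ regularity.

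The main obstacle I anticipate is the passage to the limit in the nonlinear diffusion term $\int_0^1 \sigma_i(T_M v_i^n)\,\pa_x\xi_i^n\,\pa_x\tilde\xi_i\,dx$ and in the nonlinear Robin term: weak convergence of $\pa_x v_i$ alone does not pass through the nonlinearity without compactness of $v_i$ itself. This is precisely why the modified (Lipschitz, linearly-growing) boundary nonlinearities $g_i^{\G,\mu}$ and the truncations $T_M$ are introduced — they make all lower-order terms continuous for the weak topology, while the strong $H^1$ (indeed $L^\oo$ on $\pa_x$) compactness of $v_0$ handles the electrostatic coupling; the diffusive terms are then treated by the usual energy/Minty argument exploiting monotonicity of $\xi \mapsto \sigma_i(T_M \cdot)\pa_x\xi$ after the strong convergence of $v_0$ is secured. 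Getting the $L^\oo$ bound on $u_0$ (and hence~\eqref{eq:v0.Lip}) uniformly in $M$ before the cut-off is removed also requires care, since at this stage we only know $u_2 \geq 0$ with an $L^1(\log L)$ bound, not yet an $L^\oo$ bound — one must use the 1D structure of the Poisson equation together with the Robin conditions~\eqref{vDPCM.CL.pot} to close the estimate, the genuinely $M$-uniform $L^\oo$ bound on the densities being deferred to Section~\ref{sec.bounds}.
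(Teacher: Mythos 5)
Your overall architecture matches the paper's: Rothe time discretization, a per-step solvability result via monotone-operator theory, a discrete free-energy estimate yielding $M$-independent bounds, and a compactness passage to the limit. Two points in your plan, however, are genuine gaps, and both are precisely the places where the specific hypotheses of the statement come into play.

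First, you never explain why the hypothesis~\eqref{cond_mu} on $\mu$ is needed, and your reason for passing to the limit in the boundary term --- ``$g_i^{\G,\mu}$ is Lipschitz, so the nonlinear terms pass to the limit'' --- does not work as stated. The argument of $g_i^{\G,\mu}$ is $\xi_{n,i}-\xi_i^\G = v_{n,i}+z_i v_{n,0}-\xi_i^\G$, and the component $v_{n,i}$ is \emph{not} truncated: for it you only have weak $L^2(I;H^1)$ convergence. Lipschitz continuity of $g_i^{\G,\mu}$ does not make the associated Nemytskii map weakly continuous, so you cannot conclude $g_i^{\G,\mu}(\xi_{n,i}-\xi_i^\G)\rightharpoonup g_i^{\G,\mu}(\xi_i-\xi_i^\G)$ from this alone. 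The paper's argument is to split $g_i^{\G,\mu}$ into its affine part (which commutes with weak limits) and a remainder $g_i^{\G,\mu,NL}$ that is constant outside $[-\mu,\mu]$. The constraint~\eqref{cond_mu} on $\mu$ is tailored exactly so that whenever the argument lies in $[-\mu,\mu]$ one has $|v_{n,i}|\le M$, and hence $g_i^{\G,\mu,NL}(\xi_{n,i}-\xi_i^\G)=g_i^{\G,\mu,NL}(T_M v_{n,i}+z_i v_{n,0}-\xi_i^\G)$; this last expression converges a.e.\ (by the nonlinear Aubin--Simon compactness for $u_{n,i}={\ov u}_i e_i(T_M v_{n,i})$, plus the strong $L^2(H^1)$ convergence of $v_{n,0}$). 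Without this decomposition the limit passage in the Robin term is not justified, and the role of~\eqref{cond_mu} in the statement is left unexplained.

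Second, your derivation of~\eqref{eq:v0.Lip} is muddled. In the main sketch you assert the right-hand side of the Poisson equation is uniformly in $L^\oo((0,1)\times I)$, but this is false at this stage: there is no $M$-uniform $L^\oo$ bound on $u_2$ (that is precisely what Section~\ref{sec.bounds} will establish, using~\eqref{eq:v0.Lip} as an input). You then backpedal in the final paragraph, which is good instinct, but the closing step is actually simpler than any ``bootstrapping'': the free-energy estimate gives a uniform $L^\oo(I;L^1(0,1))$ bound on $\psi_i(u_i/\ov u_i)$, hence on $u_0$. In one dimension, $\pa_{xx}v_0 = -u_0/\lambda^2 \in L^\oo(I;L^1(0,1))$ combined with the already-established $L^\oo(I;H^1)$ bound on $v_0$ gives $\pa_x v_0 \in L^\oo(I;W^{1,1}(0,1))\hookrightarrow L^\oo((0,1)\times I)$. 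No $L^\oo$ bound on the densities is needed before the cut-off is removed; the circularity you worried about does not arise.

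A minor further difference: you propose a fully implicit time-discrete scheme and invoke pseudo-monotonicity. The paper instead uses a semi-implicit scheme where $\sigma_i$ and $r_i^\G$ are evaluated at the previous time step (the first argument of $A_M$ is $\tau_n v_n$, not $v_n$), which turns the per-step operator into a genuinely strongly monotone, hence uniquely solvable, map. Your route is not necessarily wrong, but the semi-implicit choice is what makes the uniqueness-at-each-step argument clean and avoids having to separate monotone principal parts from ``compact lower-order'' contributions.
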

The {\rd remainder of this section is devoted to the proof of Proposition~\ref{expm}. We proceed  in four steps. First we construct a sequence of time discrete approximations $(P_{M,n})_{n \ge 1}$ of  problem~\eqref{eq:PM}, the solutions of which being denoted by $(u_n,v_n)_{n\ge 1}$}. Then we derive {\rd some} estimates for such solutions. In the third step we invoque compactness arguments to pass to the limit as $n\rw +\oo$ and recover a time continuous solution $(u,v)$ to model~\eqref{eq:PM}. The last step is devoted to the proof of the regularity estimate~\eqref{eq:v0.Lip}.
\vskip 2mm

% ================= STEP 1 ====================
{\textbf{Step 1.}} 
Let us fix $T\in [0,+\oo)$ a finite but arbitrary time horizon, and set $I=(0,T]$. For $n\ge1$, 
we define the time step $k_n=T/n$ and the time intervals $I_n^j= \big((j-1)k_n, j k_n\big]$, for $j=1,....,n$.

Given a Banach space $X$, we denote by $B_n(I;X)$ the space of functions $u: (0,T]\rw X$ that are constant on each of the intervals $I_n^j$, $1 \le j \le n$. If $u \in B_n(I;X)$, we define $u^j\in X$ for $1\le j\le n$ by $u=\sum u^j \un_{I_n^j}$. We introduce two mappings $\Delta_n$ and $\tau_n$ from $B_n(I;V^*)$ into itself defined by:
\[
(\Delta_n u)^j=\dfrac{1}{k_n}(u^j-u^{j-1}), \qquad (\tau_n u)^j=u^{j-1}, \qquad 1 \le j \le n,
\]
where $u^0 ={\rd  u^\textrm{in}}$ is the initial datum. We consider the discrete version of problem~\eqref{eq:PM} corresponding to the time step $k_n$ given by
\be\label{eq:PMn} \tag{$P_{M,n}$}
\Delta_nu_n +A_M(\tau_n v_n,v_n)=0, \qquad u_n=E_M v_n,\quad v_n \in B_n(I;V).
\ee
It can be written as
\be\label{pmn}
\dfrac{1}{k_n}(u_n^j-u_n^{j-1})+A_M(v_n^{j-1},v^j_n)=0, \quad u^j_n=E_M v^j_n,\quad 1 \le j \le n,\quad u_n^0=u^{\text{\rm in}}.
\ee
{\rd Thanks to arguments similar to those of Remark~\ref{rmk:charge}, there holds 
\be\label{eq:charge.n}
u_{n,0} = \sum_{i = 1,2} z_i u_{n,i} + \rho_\text{\rm hl}. 
\ee
The following lemma is about the well-posedness of problem $(P_{M,n})$.}
\begin{lemma}\label{expmn}
Under assumptions {\rm(H$_1$)--(H$_5$)}, for any $M>0$, for every $n\ge 1$, there exists a unique solution $(u_n,v_n)$ to the problem $(P_{M,n})$.
\end{lemma}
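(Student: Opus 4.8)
The plan is to prove existence and uniqueness of the solution to the time-discrete scheme $(P_{M,n})$ by induction on $j$, solving at each step the single stationary nonlinear equation~\eqref{pmn}. Fixing $j$ and writing $w = v_n^{j-1}$ (a known element of $V$, obtained from the previous step, with $w^0 = v^\textrm{in}$), the task at step $j$ is: given $w \in V$, find $v = v_n^j \in V$ such that
\[
\dfrac{1}{k_n}\bigl(E_M v - E_M v^{j-1}\bigr) + A_M(w,v) = 0 \quad \text{in } V^*.
\]
Since $w$ is frozen, the map $v \mapsto A_M(w,v)$ is affine-linear in the gradient part and in the Robin part is of the form $v\mapsto r_i^\G(T_M w_i)\, g_i^{\G,\mu}(\xi_i - \xi_i^\G)$, which is monotone in $v$ because $g_i^{\G,\mu}$ is nondecreasing and $r_i^\G(T_M w_i)>0$; likewise $v\mapsto \frac1{k_n}E_M v$ is the sum of the linear, coercive Poisson--Robin part in $v_0$ and the monotone maps $v_i\mapsto \bar u_i e_i(T_M v_i)$. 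Hence the whole operator $\mathcal{L}_w: v \mapsto \frac1{k_n}E_M v + A_M(w,v) \in V^*$ is a sum of a bounded monotone (indeed strictly monotone in the relevant directions) and hemicontinuous operator; I would first establish that $\mathcal{L}_w$ is well-defined, bounded, continuous, and monotone on $V$, using that $e_i\circ T_M$, $\sigma_i\circ T_M$, $r_i^\G\circ T_M$ are bounded and continuous (the truncation $T_M$ is what makes these global bounds available) and that $g_i^{\G,\mu}$ is globally Lipschitz.

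The key step is coercivity of $\mathcal{L}_w$ on $V$, i.e. $\langle \mathcal{L}_w v, v\rangle / \|v\|_V \to +\infty$ as $\|v\|_V\to\infty$. This is where the structure matters: testing against $v$ itself is not convenient because the natural test function is $\tilde v = v$ giving $\tilde\xi_i = \xi_i$, so $\langle A_M(w,v),v\rangle = \sum_i \int_0^1 \sigma_i(T_M w_i)|\pa_x\xi_i|^2 + \sum_{i,\G}[r_i^\G(T_M w_i) g_i^{\G,\mu}(\xi_i-\xi_i^\G)\xi_i](\G) \ge 0$ up to a bounded term (using $y g_i^{\G,\mu}(y)\ge -C$ for the linear tails and $\sigma_i(T_M w_i)\ge c(M)>0$), while $\frac1{k_n}\langle E_M v,v\rangle \ge \frac{\lambda^2}{k_n}\|\pa_x v_0\|_{L^2}^2 + \frac1{k_n}\sum_\G \beta^\G v_0(\G)^2 - C$ controls $v_0$ in $H^1$, and then $\int_0^1 \bar u_i e_i(T_M v_i) v_i \ge$ something controlling $\int v_i^+$ only. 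So the cleaner route, following Gajewski--Gröger and as the paper's variational setup suggests, is to recognize that $(P_{M,n})$ at step $j$ is the Euler--Lagrange equation of a strictly convex coercive functional on $V$: indeed $E_M v = \partial \mathcal{F}_M(v)$ where $\mathcal{F}_M(v) = \int_0^1(\frac{\lambda^2}2|\pa_x v_0|^2 + \sum_i \bar u_i \varphi_i(T_M v_i)) + \sum_\G(\frac{\beta^\G}2 v_0^2 - f^\G v_0)(\G)$, and the Robin part of $A_M(w,\cdot)$ is the derivative of $v\mapsto \sum_{i,\G}[r_i^\G(T_M w_i) G_i^{\G,\mu}(\xi_i-\xi_i^\G)](\G)$ with $G_i^{\G,\mu}$ a primitive of $g_i^{\G,\mu}$ (convex, bounded below), and the diffusion part is the derivative of $v\mapsto \frac12\sum_i \int_0^1 \sigma_i(T_M w_i)|\pa_x\xi_i|^2$. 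Minimizing
\[
J(v) = \tfrac1{k_n}\bigl(\mathcal{F}_M(v) - \langle E_M v^{j-1}, v\rangle\bigr) + \tfrac12\sum_i\int_0^1 \sigma_i(T_M w_i)|\pa_x\xi_i|^2 + \sum_{i,\G}\bigl[r_i^\G(T_M w_i) G_i^{\G,\mu}(\xi_i-\xi_i^\G)\bigr](\G)
\]
over $V$: this functional is convex, lower semicontinuous, and — this is the main obstacle — I must check it is coercive on $V$. Coercivity in $v_0$ comes from the $\frac{\lambda^2}{k_n}\|\pa_x v_0\|^2 + \frac{\beta^\G}{2k_n}v_0(\G)^2$ terms together with the norm~\eqref{eq:H1}; coercivity in $v_1,v_2$ is delicate because $\varphi_i(T_M v_i)$ grows only linearly in $|v_i|$ (it is bounded once $|v_i|>M$), so the $\mathcal{F}_M$ term alone cannot control $\|v_i\|_{H^1}$ — the gradient control must come from the $\int_0^1 \sigma_i(T_M w_i)|\pa_x \xi_i|^2$ term with $\xi_i = z_i v_0 + v_i$, using $\sigma_i(T_M w_i)\ge \underline\sigma(M)>0$, combined with the already-established bound on $\pa_x v_0$, which yields control of $\pa_x v_i$ in $L^2$; the $L^2$-norm of $v_i$ itself is then recovered from its trace, controlled by $\varphi_i$ convexity giving $v_i(\G)$ one-sided bounds, plus Poincaré. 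I would spell out this chain carefully, then conclude existence of a minimizer by the direct method and uniqueness by strict convexity (the functional is strictly convex in $\pa_x v_0$, in $v_0(\G)$, and in $\pa_x\xi_i$, which together pin down $v=(v_0,v_1,v_2)\in V$ uniquely). Finally $u_n^j := E_M v_n^j$ and one notes $u_n^j \in V^*$, closing the induction; the charge relation~\eqref{eq:charge.n} then follows as indicated in the text.
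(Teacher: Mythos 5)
Your plan is correct in spirit and closely mirrors the paper's, but you arrive at the conclusion by a genuinely different mechanism: the paper freezes $y=v_n^{j-1}$, defines $F(v)=\tfrac1{k_n}E_M v + A_M(y,v)$, and proves directly that $F$ is \emph{strongly monotone} on $V$ (using the monotonicity of $e_i\circ T_M$, the uniformly positive slope of the truncated $g_i^{\G,\mu}$, and the lower bounds $\sigma_i(T_M\cdot),r_i^\G(T_M\cdot)\ge c_M>0$), then concludes unique solvability of each time step from a Browder--Minty type theorem for strongly monotone hemicontinuous operators (the cited Brezis result). You instead observe that $F$ is the gradient of a convex functional $J$ and run the direct method plus strict convexity. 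Both routes hinge on the same underlying structure and both require essentially the same final step, namely recovering control of $v_1,v_2$ from control of $v_0$ and $\xi_i=z_iv_0+v_i$ (the paper does this with a short case analysis; you would do it in the coercivity proof). What the paper's route buys is conciseness: strong monotonicity yields coercivity and uniqueness in one stroke without constructing primitives. What your route buys is that the energy functional reappears anyway in Step~2 ($\F_M$, $\P_M$), so the construction is not wasted.

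One part of your coercivity sketch should be repaired. You write that ``the $L^2$-norm of $v_i$ is recovered from its trace, controlled by $\varphi_i$ convexity giving $v_i(\G)$ one-sided bounds, plus Poincar\'e.'' This is not the right source of control: $\varphi_{i,M}(v_i)=\int_0^{v_i}e_i(T_My)\,dy$ is non-decreasing and bounded below as $v_i\to-\oo$, so it gives no coercivity in $v_i$ in either the bulk or the trace, and in any case $E_M$ does not produce a boundary term in $v_i$. The actual two-sided boundary control comes from the Robin primitive: since $g_i^{\G,\mu}$ has linear tails with strictly positive slope, its primitive $G_i^{\G,\mu}$ grows quadratically, and $r_i^\G(T_Mw_i)\ge c_M>0$, so the term $\sum_{i,\G}[r_i^\G(T_Mw_i)\,G_i^{\G,\mu}(\xi_i-\xi_i^\G)](\G)$ controls $|\xi_i(\G)|^2$. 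Combined with the bulk term $\tfrac12\int\sigma_i(T_Mw_i)|\pa_x\xi_i|^2\ge c_M\|\pa_x\xi_i\|_{L^2}^2$ and the definition~\eqref{eq:H1} of the $H^1$-norm, this yields $\|\xi_i\|_{H^1}^2$, and then $\|v_i\|_{H^1}\le\|\xi_i\|_{H^1}+|z_i|\|v_0\|_{H^1}$ closes the coercivity; Poincar\'e and the $\varphi_i$ term are not needed. With that fix your argument is complete.
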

\begin{proof}
We use some known results on monotone operators~\cite[Corollaire 17]{Brezis6566} (see also~\cite{Lions69, Brezis73}). Let us fix $y \in V$ and define the operator $F: V \rw V^*$ by
\[F(v) = \dfrac{1}{k_n}E_M v + A_M(y,v).\]
If such operator $F$ is strongly monotone, \textit{i.e.} if there exists $\alpha >0$ such that 
\[
\lt< F(v) - F(w), v-w \rt> \ge \alpha \| v-w \|^2, \qquad \forall \, v, w \in V, 
\]
\bk
then all the equations~\eqref{pmn} are uniquely solvable, considered as equations with respect to $v_n^j$ for given $v_n^{j-1}$ (and $u_n^{j-1}$). This gives the unique solvability of our problem $(P_{M,n})$.

Let us check the strong monotonicity of $F$. Let $v,w\in V$, we compute
\begin{align*}
 \lt< F(v)-\rt.&F(w),\lt.v-w\rt>\\
=& \dfrac{1}{k_n}\int_0^1 \bigg(\lambda^2 |\pa_x (v_0-w_0)|^2 + 
 \sum_{i= 1,2} {\ov u}_i (v_i-w_i)[e_i(T_M v_i)-e_i(T_M w_i)]\bigg)\, dx\\
&+\dfrac{1}{k_n}\sum_{\G\in \{0,1\}}\lt[\beta^\G (v_0-w_0)^2\rt](\G)
+\int_0^1 \sum_{i=1,2} \sigma_i(T_M y_i)|\pa_x(z_i(v_0-w_0)+(v_i-w_i))|^2\, dx\\
&+\sum_{i=1,2}\sum_{\G\in \{0,1\}}\Big\{ r_i^\G(T_M y_i)(z_i(v_0-w_0)+(v_i-w_i)) \\
&\phantom{a}\hspace{4.5cm}\times \lt[g_i^{\G,\mu}((z_iv_0+v_i)-\xi_i^\G)-g_i^{\G,\mu}((z_i w_0+w_i)-\xi_i^\G)\rt]\Big\}(\G).
\end{align*}
Using the monotonicity of the functions $e_i$ together with the fact that 
\[(x_1-x_2)(g_i^{\G,\mu}(x_1)-g_i^{\G,\mu}(x_2))\ge |x_1-x_2|^2, \qquad  x_1, x_2 \in \mathbb{R},\] 
we get:
\begin{align*}
\lt< F(v)-F(w),v-w\rt> \ge \; & 
\dfrac{1}{k_n}\int_0^1 \lambda^2 |\pa_x (v_0-w_0)|^2 dx+\dfrac{1}{k_n} \sum_{\G\in \{0,1\}}\lt[\beta^\G (v_0-w_0)^2\rt](\G)
\\&
+\sum_{i=1,2}\int_0^1 \sigma_i(T_M y_i)|\pa_x(z_i(v_0-w_0)+(v_i-w_i))|^2dx
\\
&+\sum_{i=1,2} \sum_{\G\in \{0,1\}}\lt[r_i^\G(T_M y_i)|z_i(v_0-w_0)+(v_i-w_i)|^2\rt](\G).
\end{align*}
Moreover, defining $c>0$ as $c=\min(\lambda^2, \min_{\Gamma\in\{0,1\}} \beta^\G)$ and using that the functions $y \mapsto \sigma_i(T_M y)$ and $y \mapsto r^\G_i(T_M y)$ are  bounded from below by some positive constant $c_M$  depending only on the data of the continuous problem and on $M$,
we deduce:
\be\label{preuve:expmn}
\lt< F(v)-F(w),v-w\rt>\ge \dfrac{c}{k_n}\|v_0-w_0\|^2_{H^1(0,1)}+ c_M \sum_{i=1,2} \|z_i(v_0-w_0)+(v_i-w_i)\|^2_{H^1(0,1)}.
\ee
For $i=1,2$, we notice that the following alternative holds: 
\begin{itemize}
\item either $\|z_i(v_0-w_0)\|_{H^1(0,1)}\leq \dfrac{1}{2} \|v_i-w_i\|_{H^1(0,1)}$, which implies, by triangular inequality,
$$
\|z_i(v_0-w_0)+(v_i-w_i)\|^2_{H^1(0,1)}\geq \dfrac{1}{4} \|v_i-w_i\|^2_{H^1(0,1)},
$$
\item or $\|z_i(v_0-w_0)\|_{H^1(0,1)}>\dfrac{1}{2} \|v_i-w_i\|_{H^1(0,1)}$, 
so that 
$$
 \|v_0-w_0\|^2_{H^1(0,1)}\geq \dfrac{1}{4 |z_i|^2}\|v_i-w_i\|^2_{H^1(0,1)}.
$$
\end{itemize}
Therefore, we obtain
$$
\lt< F(v)-F(w),v-w\rt>\ge \dfrac{c}{2k_n}\|v_0-w_0\|^2_{H^1(0,1)}+
\sum_{i=1,2} \min (\dfrac{c_M}{4}, \dfrac{c}{16k_n |z_i|^2})\|v_i-w_i\|^2_{H^1(0,1)}.
$$
This proves the strong monotonicity of $F$ and therefore the lemma.
\end{proof}
\vskip 2mm

% ================= STEP 2 ====================

{\textbf{Step 2.}} With  the sequence ${(u_n,v_n)}_{n\ge 1}$ at hand, we derive some estimates, uniform w.r.t. $n$.
\begin{lemma}\label{buvn}
Assume that assumptions~{\rm(H$_1$)--(H$_5$)} hold true. Let $n\ge 1$, and let $(u_n,v_n)$ be the unique solution to problem $(P_{M,n})$ given by Lemma~\ref{expmn}. We define 
\[
U_n(t)=u^{\text{\rm in}} + \int_0^t (\Delta_n u_n)(s)\, ds, \quad 0 \le t \le T,
\]
with $\Delta_n u_n(s)=u_n(s)-u_n(s-k_n)$ for $s\in(k_n,T]$ and $\Delta_n u_n=u_n(s)-u^{\text{\rm in}}=u_n^1-u^{\text{\rm in}}$ for $s\in(0,k_n]$ ($U_n$ is the piecewise affine extension on $I$ of $jk_n \mapsto u_n^j$).

There exists $c_{M,T}\ge0$ depending on $M$ and $T$ but not on $n$ such that 
\[
\sup_{n\in \mathbb{N}}\lt\{\|v_n\|_{L^2(I;V)}+\|\Delta_n u_n\|_{L^2(I;V^*)}+\|U_n\|_{C(I;V^*)}\rt\} \le c_{M,T}.
\]
\end{lemma}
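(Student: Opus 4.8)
The plan is to derive the stated bounds by testing the discrete scheme $(P_{M,n})$ in the spirit of the free energy estimate of Proposition~\ref{subd}, but now at the discrete-in-time level and with the regularized nonlinearities. The natural test function in the equation $\Delta_n u_n + A_M(\tau_n v_n, v_n) = 0$ is $v_n$ itself; summing the relation $\tfrac{1}{k_n}(u_n^j - u_n^{j-1}) + A_M(v_n^{j-1}, v_n^j) = 0$ tested against $v_n^j$ over $j = 1, \dots, m$ for any $m \le n$, and using the convexity inequality $\langle E_M v^j - E_M v^{j-1}, v^j\rangle \ge \Phi_M(v^j) - \Phi_M(v^{j-1})$ (where $\Phi_M$ is the Landau free energy built from the truncated nonlinearities $e_i \circ T_M$), one gets a telescoping sum on the left. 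This is exactly the discrete analogue of the computation referenced as~\eqref{eq:PMn.4}. The right-hand side produces the non-positive dissipation terms $-\sum_i \sigma_i(T_M v_{n,i}^j)|\partial_x \xi_i^j|^2$ and the boundary terms, which after handling the outer potentials $\xi_i^\G$ (which are now bounded thanks to $g_i^{\G,\mu}$ being Lipschitz) give, for $0 \le t \le T$,
\[
\Phi_M(v_n(t)) + \sum_{i=1,2}\int_0^t \int_0^1 \sigma_i(T_M v_{n,i})|\partial_x \xi_i|^2\, dx\, ds \le \Phi_M(v^\textrm{in}) + C(1 + t),
\]
the linear growth in $t$ coming from the fact that the interface contributions of the $\xi_i^\G$ terms are only bounded, not summable.

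**From this energy inequality I would extract** the three bounds in turn. First, the Poisson structure inside $\Phi_M$ controls $\tfrac{\lambda^2}{2}\|\partial_x v_{n,0}\|_{L^2}^2 + \tfrac12\sum_\G \beta^\G |v_{n,0}(\G)|^2$ up to lower-order terms absorbed via Young's inequality and the boundary data $f^\G$; this gives $\|v_{n,0}\|_{L^\infty(I;H^1(0,1))} \le c_{M,T}$, and since $\|v_{n,i}\|_{H^1} \le \|\xi_i - z_i v_0\|_{H^1}$, the $L^2(I;H^1)$-bound on the full vector $v_n$ follows once we also control $\|v_{n,i}\|_{L^2(I;L^2)}$ — for this we use that the truncation forces $e_i(T_M v_{n,i})$ to stay bounded, hence $\Phi_M$ is coercive enough on the truncated range, or more directly we integrate the $\sigma_i(T_M v_{n,i})|\partial_x \xi_i|^2$ dissipation (note $\sigma_i \circ T_M$ is bounded below by $c_M > 0$, as used in Lemma~\ref{expmn}) together with a Poincaré-type inequality exploiting the boundary terms in the $H^1$ norm~\eqref{eq:H1}. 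This is where the $M$-dependence of the constant genuinely enters, which is acceptable here since $M$-uniformity is postponed to Section~\ref{sec.bounds}.

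**Next, the bound on $\Delta_n u_n$ in $L^2(I;V^*)$** comes straight from the equation: $\Delta_n u_n = -A_M(\tau_n v_n, v_n)$, so I estimate $\|A_M(\tau_n v_n, v_n)\|_{V^*}$ by duality. For any $\tilde v \in V$, $\langle A_M(\tau_n v_n, v_n), \tilde v\rangle$ splits into the bulk term $\sum_i \int_0^1 \sigma_i(T_M v_{n,i}^{j-1}) \partial_x \xi_i^j \partial_x \tilde\xi_i$, bounded by $\|\sigma_i \circ T_M\|_\infty \|\partial_x \xi_i^j\|_{L^2}\|\tilde\xi_i\|_{H^1}$, and the boundary term $\sum_{i,\G}[r_i^\G(T_M v_{n,i}^{j-1}) g_i^{\G,\mu}(\xi_i^j - \xi_i^\G)\tilde\xi_i](\G)$, bounded using that $r_i^\G \circ T_M$ is bounded (above, on the truncated range) and $g_i^{\G,\mu}$ is globally Lipschitz hence has at most linear growth, so $|g_i^{\G,\mu}(\xi_i^j - \xi_i^\G)(\G)| \lesssim 1 + |\xi_i^j(\G)| \lesssim 1 + \|\xi_i^j\|_{H^1}$ by the trace/embedding $H^1(0,1)\hookrightarrow C([0,1])$. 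Squaring, summing over $j$ and using the already-established $L^2(I;V)$-bound on $v_n$ (hence on $\xi_i$) closes this estimate. Finally, $\|U_n\|_{C(I;V^*)} \le \|u^\textrm{in}\|_{V^*} + \int_0^T \|\Delta_n u_n\|_{V^*}\,ds \le \|u^\textrm{in}\|_{V^*} + \sqrt{T}\,\|\Delta_n u_n\|_{L^2(I;V^*)}$ by Cauchy–Schwarz, and $\|u^\textrm{in}\|_{V^*}$ is finite since $u^\textrm{in} = E v^\textrm{in}$ with $v^\textrm{in} \in L^\infty$ by~(H$_5$).

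**The main obstacle** I anticipate is the careful bookkeeping in the discrete energy identity: one must make sure the convexity inequality for $E_M$ is applied with the \emph{truncated} Landau functional $\Phi_M$ (not $\Phi$), verify that $\Phi_M(v^\textrm{in})$ is finite and controlled using $M \ge \|v_i^\textrm{in}\|_\infty$ from~\eqref{M}, and correctly absorb the sign-indefinite interface terms involving $\xi_i^\G$ — these are the terms that prevent a time-uniform bound and produce the $C(1+t)$ on the right, exactly mirroring the remark after~\eqref{pdec} that $\Psi(u(t)) \le c_T$ is not uniform in $t$. Everything else is a routine combination of Young's inequality, the trace inequality on $(0,1)$, and the coercivity of $\sigma_i \circ T_M$ and positivity/boundedness of $r_i^\G \circ T_M$ on the truncated range.
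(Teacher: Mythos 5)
Your overall architecture matches the paper's: test the discrete scheme $(P_{M,n})$ with $v_n^j$, telescope via a convexity inequality for the free energy, control the bulk dissipation and boundary terms, then read off the bounds on $v_n$, $\Delta_n u_n$ and $U_n$. But the convexity step is wrong as written, and the boundary-term treatment is too vague to close.

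The inequality you invoke,
\[
\langle E_M v^j - E_M v^{j-1}, v^j\rangle \ge \F_M(v^j) - \F_M(v^{j-1}),
\]
with $\F_M$ the truncated \emph{Landau} functional, is false. Writing $u^j = E_M v^j$, convexity of $\F_M$ only yields $\langle u^j, v^j - v^{j-1}\rangle \ge \F_M(v^j) - \F_M(v^{j-1})$, a different pairing. The pairing $\langle u^j - u^{j-1}, v^j\rangle$ that the scheme actually produces is controlled by the dual (\emph{Helmholtz}) functional: since $v^j \in \partial\P_M(u^j)$, one gets $\langle u^j - u^{j-1}, v^j\rangle \ge \P_M(u^j) - \P_M(u^{j-1})$, which is the inequality the paper uses in~\eqref{eq:est.PMn.05}. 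A scalar counterexample to your version: take $\phi(v) = e^v$, $u = e^v$, $\psi(u) = u\log u - u$; with $u^{j-1}=1$, $u^j = 2$ one has $\langle u^j - u^{j-1}, v^j\rangle = \log 2 \approx 0.69 < 1 = \phi(v^j) - \phi(v^{j-1})$, while $\psi(u^j) - \psi(u^{j-1}) = 2\log 2 - 1 \approx 0.39 \le \log 2$. Your subsequent statement of the energy inequality accordingly bounds the wrong quantity: it should be $\P_M(u_n(t))$, not $\F_M(v_n(t))$, that is controlled, and it is the Helmholtz bound that gives the Poisson-type control of $v_{n,0}$ through~\eqref{FE}.

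On the boundary terms, your assertion that the Lipschitzness of $g_i^{\G,\mu}$ turns the $\xi_i^\G$-contribution into a $C(1+t)$ remainder skips the essential step. The bad part of $\langle A_M(\tau_n v_n, v_n^j), v_n^j\rangle$ is $\sum_{i,\G}\bigl[r_i^\G g_i^{\G,\mu}(\xi-\xi^\G)\,\xi^\G\bigr]$, which grows linearly in $|\xi_i(\G)|$ and hence needs to be absorbed into the good boundary dissipation; moreover the Lipschitz constant of $g_i^{\G,\mu}$ is $(g_i^\G)'(\mu)$ (for instance $\tfrac12\cosh(\mu/2)$ for $g_1^\G$), which grows with $\mu$ and hence with $M$, so this naive absorption would be $M$-dependent. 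The paper avoids this by introducing the discrete total free energy $\P_M^{\text{tot},j} = \P_M(u_n^j) + \mathcal{J}_n^j$, showing its step-by-step decay (which yields the separate dissipation bounds \eqref{eq:est.PMn.2}--\eqref{eq:est.PMn.3}), then bounding $\mathcal{J}_n^j$ by testing the scheme against the affine interpolant $\hat\xi_i$ and closing with a discrete Gronwall lemma. This gives $\P(u_n^j) \le c_T$ with $c_T$ \emph{independent of $M$}, which you dismiss as unnecessary at this stage; in fact the $M$-independence of the resulting bound~\eqref{eq:bound.v0} on $\|v_{n,0}\|_{L^\oo(I;H^1(0,1))}$ is exactly what allows $\mu$ to be chosen compatibly with $M$ via condition~\eqref{cond_mu} in Proposition~\ref{expm}. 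Your duality estimate for $\|\Delta_n u_n\|_{L^2(I;V^*)}$ and the integration giving $\|U_n\|_{C(I;V^*)}$ are fine and agree with the paper.
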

\begin{proof}
Along the proof $c\ge0$ is a constant that may depend on the data but not on $n$ or $M$ and whose value may change from line to line. Similarly we denote by $c_M$, $c_T$, $c_{M,T}$ the constants that may depend on $M$, $T$ or both.\\
Let us set
\[
\f_{i,M}(v) = \int_0^v e_i(T_M y)dy, \quad v \in \R, 
\]
and then let us define the functionals $\F_M$ and $\P_M$ by
\[
\F_M(v)=\int_0^1 \lt(\dfrac{\lambda^2}{2}|\pa_x v_0|^2 + \sum_{i=1,2} {\ov u}_i \f_{i,M}(v_i)\rt)dx + \sum_{\G\in \{0,1\}}\lt[\dfrac{\beta^\G}{2}v_0^2-f^\G v_0\rt](\G),\ \text{ for } v \in V,
\]
and
\[
\P_M(u)=\sup_{v\in V}\{\lt< u,v\rt>-\F_M(v)\},\quad \text{ for } u \in V^*.
\]
Notice that $\F_M$ is continuous, convex and coercive.
\rd
Testing~\eqref{pmn} with $v_n^j$, for $1 \le j \le n$, provides 
\be\label{eq:est.PMn.0}
\lt< u_n^j - u_n^{j-1} , v_n^j \rt> + k_n \lt< A_M(v_n^{j-1},v_n^j), v_n^j \rt> = 0.
\ee
Since $v_n^j \in \pa \P_M(u_n^j)$ (which is equivalent to $u_n^j \in \pa \F_M(v_n^j)$), and since $\P_M$ is convex, there holds 
\be
 \P_M(u_n^{j-1})
 \ge  \P_M(u^j_n) +  \lt< u_n^{j-1}-u_n^j , v_n^j \rt>
 \stackrel{\eqref{eq:est.PMn.0}}= \P_M(u^j_n) + k_n \lt< A_M(v_n^{j-1},v_n^j), v_n^j \rt>.
 \label{eq:est.PMn.05}
 \ee
Moreover, $\F_M\le \F$ hence $\P_M\ge \P$, and $\P_M(u^\text{in})=\P(u^\text{in})$ when $M$ is large enough, \textit{i.e.} under condition~\eqref{M}.
Therefore, summing the estimates, we deduce: 
\[
\P(u_n^{J}) + \sum_{j=1}^J k_n \lt< A_M(v_n^{j-1},v_n^j ), v_n^j \rt> \le  \P(u^\text{in}), \qquad J \in \{1,\dots, n\}.
\]
Due to the boundary conditions, $\lt< A_M(v,v), v\rt>$ might be negative. To circumvent this difficulty and as in Section~\ref{sec.apriori}, we introduce the total free energy taking into account the energy of the charge carriers that left the domain over time. For $j \in\{ 1, \dots, n\}$, define 
\[\label{psitotm}
\P_M^{\text{\rm tot},j}= \P_M(u_n^j) + \mathcal{J}^j_n
\]
with 
\[
\mathcal{J}^j_n = \sum_{\ell=1}^j k_n \sum_{i = 1,2}\sum_{\G\in \{0,1\}}\lt[r_i^\G(T_M v_{n,i}^{\ell-1})g_i^{\G,\mu}(\xi_{n,i}^\ell-\xi_i^{\G})\xi_i^{\G}\rt](\G), \qquad 1 \le j \le n, 
\]
then one checks, using the definition of $\P_M^{\text{\rm tot}}$,~\eqref{eq:est.PMn.05} and the definition of $A_M$ that
\begin{align*}
\P_M^{\text{\rm tot},j} - \P_M^{\text{\rm tot},j-1} & = \P_M(u_n^j) - \P_M(u_n^{j-1}) + k_n \sum_{i = 1,2}\sum_{\G\in \{0,1\}}\lt[r_i^\G(T_M v_{n,i}^{j-1})g_i^{\G,\mu}(\xi_{n,i}^j-\xi_i^{\G})\xi_i^{\G}\rt](\G) \\
&\stackrel{\eqref{eq:est.PMn.05}}\le -k_n \lt< A_M(v_n^{j-1},v_n^j), v_n^j \rt> 
 \\
&\qquad +  k_n \sum_{i = 1,2}\sum_{\G\in \{0,1\}}\lt[r_i^\G(T_M v_{n,i}^{j-1})g_i^{\G,\mu}(\xi_{n,i}^j-\xi_i^{\G})\xi_i^{\G}\rt](\G)\\
&\stackrel{\eqref{eq:AM}}\le - k_n \sum_{i=1,2} \int_0^1 \sigma_i(T_Mv_{n,i}^{j-1}) \lt| \pa_x \xi_{n,i}^j \rt|^2  \,dx \\
&\qquad -  k_n \sum_{i=1,2} \sum_{\G \in \{0,1\}} 
\lt[r_i^\G(T_M v_{n,i}^{j-1})g_i^{\G,\mu}(\xi_{n,i}^j-\xi_i^{\G})\lt( \xi_{n,i}^j - \xi_i^{\G}\rt)\rt](\G).
\end{align*}
The two terms in the right-hand side being non-positive, this leads to the following estimates, which are uniform w.r.t. $n$: 
\begin{align}\label{eq:est.PMn.1}
\max_{1 \le j \le n} \P^{\text{\rm tot},j} \le&\; \P(u^\text{in}), 
\\
\label{eq:est.PMn.2}
0\, \le\, \sum_{\ell = 1}^j k_n \sum_{i=1,2} \int_0^1 \sigma_i(T_M v_{n,i}^{\ell-1}) \lt| \pa_x \xi_{n,i}^{\ell} \rt|^2  dx \le &\; \P(u^\text{in}),
\\
\label{eq:est.PMn.3}
0\, \le\,  \sum_{\ell = 1}^j k_n \sum_{i=1,2} \sum_{\G \in \{0,1\}} 
\lt[r_i^\G(T_M v_{n,i}^{\ell-1})g_i^{\G,\mu}(\xi_{n,i}^\ell-\xi_i^{\G})\lt( \xi_{n,i}^\ell - \xi_i^{\G}\rt)\rt](\G)
 \le &\;\P(u^\text{in}).
\end{align}
Since $\P^{\text{\rm tot},j}$ can be negative due to the boundary flux contributions, 
some further work on~\eqref{eq:est.PMn.1} is needed to get some bound on $\P(u_{n,i}^j)$.
Testing~\eqref{pmn} with $(0,\hat \xi_1, \hat \xi_2)$ where $\hat \xi_i$ is defined for $i=1,2$ by $x\in[0,1] \mapsto \hat \xi_i(x) = \xi_i^0 + x (\xi_i^1 - \xi_i^0)$ (and then summing the first $j$ time steps), we obtain
\be\label{eq:Jjn}
-\mathcal{J}^j_n =   \sum_{i=1,2} \int_0^1 (u_{n,i}^j - u_{i}^\text{in}) \hat \xi_i dx
+
 \sum_{\ell = 1}^j k_n  \sum_{i=1,2} (\xi_i^1 - \xi_i^0) \int_0^1 \sigma_i(T_M v_{n,i}^{\ell-1}) \pa_x \xi_{n,i}^\ell dx.
\ee
On the one hand, due to the Young-Fenchel inequality $ab \le \p_i(a) + \f_i(b)$, there holds 
\[
  \int_0^1 (u_{n,i}^j - u_{i}^\text{in}) \hat \xi_i dx \le   {\| u_i^\text{in}\|}_1 \max_\G\, |\xi_i^\G| + \dfrac12 \int_0^1 \lt[
 \p_i(u_{n,i}^j) + \f_i(2 \hat \xi_i) \rt] dx.
\]
Since the functions $\f_i$ are non-negative, we deduce
\be\label{eq:Jjn.1}
 \sum_{i=1,2} \int_0^1 (u_{n,i}^j - u_{i}^\text{in}) \hat \xi_i dx \le  \dfrac12 \P(u_n^j) + c.
\ee 
On the other hand, the elementary Young inequality $ab \le  (\e/2) a^2 + (1/2\e) b^2$ yields 
\begin{multline}
\label{eq:Jjn.32}
 \sum_{\ell = 1}^j k_n  \sum_{i=1,2} (\xi_i^1 - \xi_i^0) \int_0^1 \sigma_i(T_M v_{n,i}^{\ell-1}) \pa_x \xi_{n,i}^\ell dx 
\\
 \le   \sum_{\ell = 1}^j k_n \sum_{i=1,2} \sigma_i(T_M v_{n,i}^{\ell-1}) \lt| \pa_x \xi_{n,i}^\ell \rt|^2dx 
 + \sum_{\ell = 1}^j k_n   \sum_{i=1,2} \dfrac{ (\xi_i^1 - \xi_i^0)^2}4  \int_0^1 \sigma_i(T_M v_{n,i}^{\ell-1}) dx.
\end{multline}
One directly infers from the boundedness of $\sigma_1$ that 
\be\label{eq:Jjn.33}
\sum_{\ell = 1}^j k_n  \dfrac{ (\xi_1^1 - \xi_1^0)^2}2 \int_0^1 \sigma_1(v_{n,1}^{\ell-1}) dx \le c\, T.
\ee
Since $\sigma_2(T_M v_{n,2}^{\ell - 1}) = d_2 u_{n,2}^{\ell-1} \le \p_2(u_{n,2}^{\ell-1}) + \f_2(d_2)$, we get
\be\label{eq:Jjn.35}
\sum_{\ell = 1}^j k_n  \dfrac{ (\xi_2^1 - \xi_2^0)^2}2 \int_0^1 \sigma_2(v_{n,2}^{\ell-1}) dx \le c\lt(1+\sum_{\ell = 1}^j k_n \P(u_n^{\ell-1})\rt). 
\ee
Collecting~\eqref{eq:est.PMn.1} and~\eqref{eq:Jjn}--\eqref{eq:Jjn.35}, we deduce, for $1\le j\le n$,
\[
\P(u_n^j) =\P^{\text{\rm tot}, j}-\mathcal{J}^j_n 
\le \P(u^{\text{\rm in}}) + c (1 +T) + \dfrac12 \P(u_n^j) + \sum_{\ell=1}^jk_n\P(u_n^{l-1}).
\]
Applying a discrete Gronwall lemma after having combined the above calculations, we obtain
\be\label{eq:PMn.4}
\P(u_n^j)  \le c_T.
\ee
We deduce from~\eqref{eq:PMn.4} the following estimates on $\lt(u_n, v_n\rt)$ which are uniform with respect to $n$.
\bk
\be\label{eq:bound.v0}
 \|v_{n,0}\|_{L^{\oo}(I;H^1(0,1))} \le \cter{cte:v0}, 
\ee
\be\label{eq:bound.u1}
0 < \ov u_1 e_1(-M) \le u_{n,1} \le \ov u_1 e_1(M) < \ov u_1, 
\ee
\be\label{eq:bound.u2}
0 < \ov u_2 e_2(-M) \le u_{n,2} \le \ov u_2 e_2(M) < +\oo.
\ee

With these bounds on $u_{n1},u_{n,2}$ and the definitions of the functions $r_i^\Gamma$ and $g_i^\Gamma$, we deduce from~\eqref{eq:est.PMn.2} and~\eqref{eq:est.PMn.3} that
\[
\|\xi_{n,i}\|_{L^2(I; H^1(0,1))} \le c_M.
\]
Note that the four above estimates are uniform w.r.t. $n$, and that the quantity $\cter{cte:v0}$ appearing in~\eqref{eq:bound.v0} is the one appearing in the statement of Proposition~\ref{expm}.
Besides, recalling $\xi_{n,i}=v_{n,i}+z_i v_{n,0}$ we get for $i=1,2$,
\[
\|v_{n,i}\|_{L^2(I;H^1(0,1))}\le c_{M,T}.
\]
By continuity of $A_M: V\times V \to V^*$, we end up with
\[
\|\Delta_n u_n\|_{L^2(I;V^*)}=\|A_M(v_n,v_n)\|_{L^2(I;V^*)}\le c_{M,T},
\]
and $ U_n \in C(I;V^*)$ with the bound $\|U_n\|_{C(I;V^*)}\le c_{M,T}$.
\end{proof}

% ====================== STEP 3 =============================
{\textbf{Step 3.}} {\rd The third step of the proof of Proposition~\ref{expm} consists in showing the following lemma.
\begin{lemma}\label{lem:step3}
There exists a solution $(u,v)$ to~\eqref{eq:PM} such that, up to a subsequence, 
\begin{align*}
u_{n,i} \underset{n\to +\oo}\longrightarrow u_i &\quad \text{almost everywhere and in the $L^\oo((0,1) \times I)$-weak-$\star$ sense, for $i=0,1,2$, } \\
v_{n,i} \underset{n\to+\oo} {-\hspace{-4pt}-\hspace{-4pt}\rightharpoonup}  v_i & \quad \text{weakly in $L^2(I;H^1(0,1))$, $i  = 1,2$}, \\
v_{n,0}  \underset{n\to +\oo}\longrightarrow v_0  &\quad \text{strongly in $L^2(I;H^1(0,1))$}.
\end{align*}
Moreover, there exists $c_T>0$ not depending on $M$ such that 
\be\label{eq:Pu}
\lt\| \P(u) \rt\|_{L^\oo(I)} \le c_T.
\ee
\end{lemma}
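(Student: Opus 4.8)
The plan is to pass to the limit $n \to +\oo$ in the time-discrete problem $(P_{M,n})$ using the uniform estimates of Lemma~\ref{buvn}, and then to upgrade the bound~\eqref{eq:PMn.4} on $\P(u_n^j)$ into the time-continuous bound~\eqref{eq:Pu}, taking care that the latter constant is independent of $M$. Denote by $u_n, v_n$ the piecewise-constant-in-time functions and by $U_n$ the piecewise-affine interpolant of $u_n$ introduced in Lemma~\ref{buvn}. First I would collect the consequences of Lemma~\ref{buvn}: boundedness of $(v_n)$ in $L^2(I;V)$, of $(\Delta_n u_n)$ in $L^2(I;V^*)$, of $(U_n)$ in $C(I;V^*)$, together with the pointwise $M$-dependent bounds~\eqref{eq:bound.u1}--\eqref{eq:bound.u2} on $u_{n,1}, u_{n,2}$ and the $M$-uniform bound~\eqref{eq:bound.v0} on $v_{n,0}$. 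Up to a subsequence we then extract weak limits $v_i$ in $L^2(I;H^1(0,1))$ for $i=0,1,2$, and $u_i$ in $L^\oo((0,1)\times I)$-weak-$\star$ for $i=0,1,2$ (using also~\eqref{eq:charge.n} for $i=0$). Since $(U_n)$ is bounded in $C(I;V^*)$ with derivative $\Delta_n u_n$ bounded in $L^2(I;V^*)$ and each $u_{n,i}$ is bounded in $L^2(I;L^2(0,1))$, an Aubin--Lions--Simon type argument (together with $\|U_n - u_n\|_{L^2(I;V^*)} \le k_n \|\Delta_n u_n\|_{L^2(I;V^*)} \to 0$) gives strong convergence of $u_n$ in $L^2(I;V^*)$, hence, combined with the weak $L^2$-convergence of $u_n$, strong convergence in $L^2(I;L^2(0,1))$ and, up to a further subsequence, almost everywhere convergence of $u_{n,i}$. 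By the monotone relation $u_{n,i} = \ov u_i e_i(T_M v_{n,i})$ and injectivity of $e_i$, the almost everywhere convergence of $u_{n,i}$ forces almost everywhere convergence of $T_M v_{n,i}$, and since $v_{n,i}$ is eventually bounded by $M$ one identifies the limit and obtains $u_i = \ov u_i e_i(T_M v_i)$, i.e. $u = E_M v$.

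Next I would pass to the limit in the equation $\Delta_n u_n + A_M(\tau_n v_n, v_n) = 0$. Testing against a fixed $\tilde v \in L^2(I;V)$ and integrating in time, the term $\int_I \langle \Delta_n u_n, \tilde v\rangle$ converges to $-\int_I \langle \dot u, \tilde v\rangle$ (interpreting $\dot u$ as the distributional derivative of the limit of $U_n$, which lies in $L^2(I;V^*)$). For the operator term, $\tau_n v_n$ and $v_n$ have the same weak limit in $L^2(I;V)$ because $\|\tau_n v_n - v_n\|_{L^2(I;V)} \to 0$ (a standard time-shift estimate, using that consecutive $v_n^j$ differ by a controlled amount, or more directly $\|\tau_n v_n - v_n\|_{L^2((k_n,T];V)}^2 \le k_n \|v_n\|^2$ type bounds). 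The diffusion part $\int_I \int_0^1 \sigma_i(T_M \tau_n v_{n,i}) \pa_x \xi_{n,i} \pa_x \tilde\xi_i$ passes to the limit because $\sigma_i(T_M \tau_n v_{n,i}) \to \sigma_i(T_M v_i)$ strongly in every $L^p$ (it is bounded by a constant depending on $M$ and converges a.e.) while $\pa_x \xi_{n,i} \rightharpoonup \pa_x \xi_i$ weakly in $L^2$; the boundary terms $[r_i^\G(T_M \tau_n v_{n,i}) g_i^{\G,\mu}(\xi_{n,i} - \xi_i^\G) \tilde\xi_i](\G)$ pass to the limit using the continuity of the trace $H^1(0,1) \to \R$, the strong convergence of $v_{n,0}$ in $L^2(I;H^1)$, and — for the factors involving $v_{n,i}$, $i=1,2$, which converge only weakly in $H^1$ but a.e. pointwise — the boundedness and Lipschitz continuity of $r_i^\G \circ T_M$ and $g_i^{\G,\mu}$, invoking a compactness argument on the traces (the trace operator being compact from $H^{1}(0,1)$-weak... more safely: $\xi_{n,i}$ is bounded in $L^2(I;H^1(0,1))$ and its time-increments are controlled, so its traces converge strongly in $L^2(I)$ by Aubin--Lions on the interval, or one simply uses that $v_{n,i} \to v_i$ a.e. on $(0,1)\times I$ together with the continuous embedding to upgrade to trace convergence). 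This yields $\dot u + A_M(v,v) = 0$ in $L^2(I;V^*)$, and the initial condition $u(0) = u^\text{in}$ follows from the construction of $U_n$. Since $T$ was arbitrary, a diagonal extraction produces a solution on $\R_+$, giving a solution to~\eqref{eq:PM}.

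Finally, for the bound~\eqref{eq:Pu}, the point is that $\P$ is convex and lower semicontinuous on $V^*$ (as a supremum of affine functionals), so it is also weakly-$\star$ lower semicontinuous along the convergence $u_n(t) \rightharpoonup u(t)$ at (almost every, or via Fatou in time) fixed $t$; passing to the limit in~\eqref{eq:PMn.4}, which reads $\P(u_n^j) \le c_T$ with $c_T$ independent of $M$ and of $n$, gives $\P(u(t)) \le c_T$ for a.e. $t \in I$, whence $\|\P(u)\|_{L^\oo(I)} \le c_T$. Alternatively, and more robustly, one re-runs the discrete free-energy computation~\eqref{eq:Jjn}--\eqref{eq:PMn.4} directly at the continuous level on the limit solution $(u,v)$ — testing $\dot u + A_M(v,v)=0$ against $v$ and against $(0,\hat\xi_1,\hat\xi_2)$, using the Young--Fenchel inequality $ab \le \p_i(a) + \f_i(b)$, the boundedness of $\sigma_1$, the bound $\sigma_2(T_M v_2) = d_2 u_2 \le \p_2(u_2) + \f_2(d_2)$, and a Gronwall argument in time — which reproduces $\P(u(t)) \le c_T$ with the same $M$-independent constant. \textbf{The main obstacle} I anticipate is the passage to the limit in the nonlinear boundary terms of $A_M$: the factors $r_i^\G(T_M v_{n,i})$ and $g_i^{\G,\mu}(\xi_{n,i}-\xi_i^\G)$ involve traces of $v_{n,i}$, $i=1,2$, which by the $M$-dependent energy estimates converge only weakly in $L^2(I;H^1(0,1))$, so one must produce strong (or a.e.) convergence of the traces; this is where the compactness coming from the $L^2(I;V^*)$-bound on $\dot u$ (hence on $\Delta_n u_n$) together with Aubin--Lions is essential, and one has to be careful that $u_{n,i}$ controls $T_M v_{n,i}$ but not $v_{n,i}$ itself outside the range where the truncation is inactive — which is harmless here precisely because condition~\eqref{M} guarantees the truncation is eventually inactive near the initial data, and the $M$-dependent bounds~\eqref{eq:bound.u1}--\eqref{eq:bound.u2} keep $v_{n,i}$ within $[-M,M]$ wherever it matters.
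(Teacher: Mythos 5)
Your overall architecture matches the paper: extract limits from the estimates of Lemma~\ref{buvn}, get pointwise compactness on the densities, identify $u=E_Mv$, pass to the limit in $\dot u+A_M(\tau_n v_n,v_n)=0$, and carry the $M$-uniform bound~\eqref{eq:PMn.4} over to~\eqref{eq:Pu}. Your treatment of the final bound via convex lower semicontinuity of $\P$ is a legitimate variant of the paper's route (which instead establishes $\P(u_n)\to\P(u)$ in $L^1(I)$ by a.e.\ convergence plus the pointwise $M$-dependent bounds~\eqref{eq:bound.u1}--\eqref{eq:bound.u2} and dominated convergence). Either way this part is fine.

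However, your handling of the boundary terms $\bigl[r_i^\G(T_M\tau_n v_{n,i})\,g_i^{\G,\mu}(\xi_{n,i}-\xi_i^\G)\,\tilde\xi_i\bigr](\G)$ has a real gap, and it is exactly the one you flagged as the ``main obstacle'' but did not actually resolve. Your two proposed mechanisms both fail. First, Aubin--Lions applied to $\xi_{n,i}=v_{n,i}+z_iv_{n,0}$ requires a uniform bound on a time derivative of $\xi_{n,i}$; what is controlled is $\Delta_n u_n$ in $L^2(I;V^*)$, and this only controls time increments of $u_{n,i}=\ov u_i\,e_i(T_Mv_{n,i})$, hence of $T_Mv_{n,i}$, not of $v_{n,i}$ on the set where the truncation is active. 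Second, a.e.\ convergence on $(0,1)\times I$ of $u_{n,i}$ forces a.e.\ convergence of $T_Mv_{n,i}$ (since $e_i\circ T_M$ is a homeomorphism onto its range), but it gives no information on $v_{n,i}$ outside $[-M,M]$, and a.e.\ interior convergence in any case does not imply convergence of traces. Your closing remark that the bounds~\eqref{eq:bound.u1}--\eqref{eq:bound.u2} ``keep $v_{n,i}$ within $[-M,M]$ wherever it matters'' is incorrect: those are bounds on the \emph{densities}, which are tautologically in $[\ov u_ie_i(-M),\ov u_ie_i(M)]$ since $u_{n,i}=\ov u_ie_i(T_Mv_{n,i})$, and they say nothing about $v_{n,i}$ itself. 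So with only weak $L^2(I;H^1)$ convergence of $v_{n,i}$, the nonlinear factor $g_i^{\G,\mu}(\xi_{n,i}-\xi_i^\G)$ (which is Lipschitz but unbounded) cannot be passed to the limit by your argument.

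The missing idea, and the reason the paper introduced the modified boundary nonlinearities $g_i^{\G,\mu}$ and imposed the tuning~\eqref{cond_mu} in the first place, is the decomposition~\eqref{eq:giGmu.decomp}: write $g_i^{\G,\mu}(\xi)=\bigl(g_i^{\G,\mu}\bigr)'(\mu)\,\xi+g_i^{\G,\mu,NL}(\xi)$, where $g_i^{\G,\mu,NL}$ is \emph{bounded} and constant outside $[-\mu,\mu]$. The linear part passes to the limit by mere weak convergence of $\xi_{n,i}$ (paired with the strongly convergent prefactor $r_i^\G(T_M\tau_n v_{n,i})$). For the nonlinear part, the hypothesis $\mu\le M-|z_i|c_1-|\xi_i^\G|$ combined with the $M$-uniform $L^\oo$ bound on $v_{n,0}$ forces $|v_{n,i}|\le M$ on the set $\{|\xi_{n,i}-\xi_i^\G|\le\mu\}$ where $g_i^{\G,\mu,NL}$ is non-constant; one checks that this permits replacing $\xi_{n,i}-\xi_i^\G$ by $T_Mv_{n,i}+z_iv_{n,0}-\xi_i^\G$ without changing the value of $g_i^{\G,\mu,NL}$. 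The latter expression converges a.e.\ (via the strong interpolation/trace compactness available for $T_Mv_{n,i}$ and the strong $H^1$ convergence of $v_{n,0}$), and since $g_i^{\G,\mu,NL}$ is bounded, dominated convergence yields strong $L^2(I)$ convergence of the nonlinear part. Without this decomposition, which you did not mention, the limit passage in the boundary terms does not close.
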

}
\begin{proof}
Let $(u_n,v_n)$ be the unique solution to~\eqref{eq:PMn} given by Lemma~\ref{expmn}. By Lemma~\ref{buvn} we have that, up to a subsequence, the following convergences hold as $n$ goes to $+\oo$:
\be\label{limvnzn}
\lt\{\begin{array}{lll}
\displaystyle
\rd v_{n,i} \ru v_i & \text{weakly in } L^2(I;H^1(0,1)), \; i = 1,2, \\[2mm]
\displaystyle 
\rd v_{n,0} \ru v_0 & \rd \text{in the } L^\oo(I;H^1(0,1))\text{-weak-}\star\; \text{sense},\\[2mm]
\displaystyle
U_n \ru u & \text{weakly in }  L^2(I;H^*)\text{ and } H^1(I;V^*),\\[2mm]
\displaystyle
U_n(t) \ru u(t) & \text{weakly in } {\rd V^*}
, \text{ for every } t\in I,\\[2mm]
\displaystyle
\Delta_n u_n \ru \dot u & \text{weakly in }  L^2(I;V^*). 
\end{array}\rt.
\ee
Note that, since $U_n(0)=u^\text{in}$, we have $u(0)=u^\text{in}$. Moreover, by definition of $U_n$ we have
\be\label{znun}
\|U_n-u_n\|_{L^2(I;V^*)}\le k_n\|\Delta_n u_n\|_{L^2(I;V^*)} \rw 0,\  \text{ as }n\rw +\oo.
\ee
Hence, for almost every $t$ in $I$,
\[
(U_n-u_n)(t) \underset{n\to+\oo} \longrightarrow 0  \text{ in } V^*\\[2mm]
\qquad \text{and} \qquad 
u_n(t) \ \underset{n\to+\oo} {-\hspace{-4pt}-\hspace{-4pt}\ru} u(t)\;  \text{weakly in } {\rd V^*.}
\]
By construction we have that
\be\label{uvni}
u_{n,i}={\ov u}_ie_i(T_M v_{n,i}), \ i=1,2.
\ee 
Then since $e_i \circ T_M$ are Lipschitz continuous, the bounds on $v_{n,i}$ ensure that $u_{n,i}$ are bounded in $L^2(I;H^1(0,1))$.
We deduce then from the nonlinear Aubin-Simon compactness result \cite[Proposition $1$]{Moussa_2016} that
\be\label{uniae}
u_{n,i}\rw u_i\ \text{ almost everywhere in }I\times(0,1), \text{ with } u_i={\ov u}_ie_i(T_M v_i), \ i=1,2,
\ee
which, thanks to~\eqref{eq:charge.n}, gives also $u_{n,0}\rw u_0$ almost everywhere in $I\times(0,1)$ with 
\be\label{eq:charge}
u_0 = \sum_{i=1,2} z_i u_i + \rho_\text{\rm hl}.
\ee
Together with~\eqref{eq:bound.u1} and \eqref{eq:bound.u2},  this implies that
\be\label{limun}
u_{n}\rw u\ \text{ strongly in }L^2(I;L^2(0,1))\text{ and }L^2(I;V^*),
\ee
due to the Lebesgue dominated convergence theorem,  {\rd as well as in the $L^\oo((0,1) \times (0,T))$-weak-$\star$ sense}.
Moreover, in view of~\eqref{znun}, one has
\[U_{n,i}\underset{n\to+\oo} \longrightarrow u_i \text{ strongly in }L^2(I;(H^1(0,1))^*),\ i=1,2.\]
\rd 
Since $u_{n,0} = {(E_M v_n)}_0$, one has 
\be\label{eq:vn0}
\lambda^2\int_0^1 \pa_x v_{n,0} \pa_x \tilde{v}dx + \sum_{\G\in \{0,1\}}\lt[\lt(\beta^\G v_{n,0}-f^\G \rt)\tilde{v}\rt](\G)=\int_0^1 u_{n,0} \tilde{v}dx, \qquad \forall\, \tilde{v} \in V.
\ee
The aforementioned convergence properties are sufficient to pass to the limit $n\to+\oo$ in the above equality, leading to 
\be\label{eq:v0}
\lambda^2\int_0^1 \pa_x v_{0} \pa_x \tilde{v}dx + \sum_{\G\in \{0,1\}}\lt[\lt(\beta^\G v_{0}-f^\G \rt)\tilde{v}\rt](\G)=\int_0^1 u_{0} \tilde{v}dx, \qquad \forall\, \tilde{v} \in V,
\ee
or equivalently $u_0 = {(E_Mv)}_0$.
Choosing $\tilde v= v_{n,0}$ as a test function in~\eqref{eq:vn0} and passing to the limit $n\to +\oo$ provides 
\begin{align*}
\lambda^2\int_0^T\int_0^1 \lt| \pa_x v_{n,0} \rt|^2 dxdt &+\; \int_0^T \sum_{\G\in \{0,1\}} \beta^\G  \lt| v_{n,0} \rt|^2 dt  \\ &= \int_0^T \int_0^1 u_{n,0} v_{n,0}dxdt + \int_0^T \sum_{\G\in \{0,1\}} f^\G v_{n,0} dt
\\
&\underset{n\to +\oo} \longrightarrow \int_0^T \int_0^1 u_{0} v_{0}dx dt + \int_0^T\sum_{\G\in \{0,1\}} f^\G v_{0} dt
\\
&= \lambda^2\int_0^T\int_0^1 \lt| \pa_x v_{0} \rt|^2 dx dt + \int_0^T \sum_{\G\in \{0,1\}} \beta^\G  \lt| v_{0} \rt|^2 dt
\end{align*}
thanks to~\eqref{eq:v0}. As a consequence, $\|v_{n,0}\|_{L^2(I;H^1(0,1))}$ tends to $\|v_{0}\|_{L^2(I;H^1(0,1))}$, whence 
\be\label{limv0}
v_{n,0} \underset{n\to +\oo} \longrightarrow v_0 \quad\text{strongly in} \; L^2(I;H^1(0,1)).
\ee

Next, due to~\eqref{uniae} and to~\eqref{limv0} combined with~\eqref{eq:bound.u1} and \eqref{eq:bound.u2}, 
one gets that 
\[
\P(u_n) \underset{n\to+\oo} \longrightarrow \P(u) \quad \text{in}\; L^1(I).
\]
Due to~\eqref{eq:PMn.4}, we infer that~\eqref{eq:Pu} holds true.
\bk

Finally we want to show that $(u,v)$ is a solution to problem~\eqref{eq:PM}.
First we prove that
\be\label{limam}
\lim_{n\rw +\oo} A_M(\tau_n v_n,v_n)=A_M(v,v) \text { in }L^2(I; V^*).
\ee
Taking a test function $\tilde{v} \in V$ and setting, for $i=1,2$, $\xi_{n,i}=v_{n,i}+z_i v_{n,0}$, $\tilde{\xi}_i=\tilde{v}_i +z_i \tilde{v}_0$, we have
\begin{multline*}
\displaystyle
\lt< A_M(\tau_n v_n,v_n),\tilde{v}\rt>=\sum_{i=1,2}\int_0^1 \sigma_i(T_M \tau_n v_{n,i}) \pa_x \xi_{n,i} \pa_x \tilde{\xi}_i dx\\
\displaystyle
+\sum_{i=1,2}\sum_{\G\in \{0,1\}}\lt[ r_i^\G(T_M \tau_n v_{n,i})g_i^{\G,\mu}(\xi_{n,i}-\xi_i^{\G})\tilde{\xi}_i\rt](\G).
\end{multline*}
By~\eqref{uvni},~\eqref{limun} and the properties of the translation operator function $\tau_n$ {\rd (see for instance \cite[Lemma 4.3]{Brezis11})} 
we have 
$$T_M\tau_n v_{n,i} \rw T_M v_i \hbox{ strongly in } L^2(S;L^2(0,1)) \hbox{ and in } L^2_\text{\rd loc}(S;H^s(\G)),\  i=1,2, s>\frac{1}{2},$$
implying that 
\begin{equation}\label{limsvn}
\sigma_i(T_M \tau_n v_{n,i})\rw \sigma_i(T_M v_{i})\hbox{ strongly in } L^2(S;L^2(0,1)),\  i=1,2,
\end{equation}
since $\sigma_i \circ T_M$ is Lipschitz continuous, 
and
\begin{equation}\label{limrvn}
r_i^\G(T_M \tau_n v_{n,i})\rw r_i^\G(T_M v_{i})\hbox{ strongly in } L^2(S),\  i=1,2,\ \G\in\{0,1\}
\end{equation}
thanks to the trace theorem. 
Moreover, by~\eqref{limvnzn} we obtain
\begin{equation}\label{limxni}
\xi_{n,i} \ru \xi_i  \hbox{ weakly in } L^2(S;V)
\end{equation}
with $\xi_i=v_i+z_i v_0$.
This, together with~\eqref{limsvn}, gives
\begin{equation}\label{lims}
\lim_{n\rw +\oo}\sum_{i=1,2}\int_0^1 \sigma_i(T_M \tau_n v_{n,i}) \pa_x \xi_{n,i} \pa_x \tilde{\xi}_i dx=\sum_{i=1,2}\int_0^1 \sigma_i(T_M v_{i}) \pa_x \xi_{i} \pa_x \tilde{\xi}_i dx,
\end{equation}
We are now interested in the weak convergence of the term $g_i^{\G,\mu}(\xi_{n,i}-\xi_i^{\G})$. 
The approximation $g_i^{\G,\mu}$ of $g_i^\G$ has been tailored in \eqref{eq:giGmu} so that the function $g_i^{\G,\mu, NL}$ defined by 
\[
g_i^{\G,\mu, NL}(\xi) = g_i^{\G,\mu}(\xi) - \left(g_i^{\G,\mu}\right)'(\mu) \, \xi 
\]
is constant outside $[-\mu,\mu]$. 
In view of the uniform estimate~\eqref{eq:bound.v0} of $v_{n,0}$ and the one-dimensional Sobolev-Morrey inequality (recall the definition~\eqref{eq:H1} of the $H^1(0,1)$-norm),
\[
\|w\|_{\oo} \leq \ \|w\|_{H^1(0,1)}, \qquad \forall w \in H^1(0,1),
\]
there holds 
\[
{\|v_{n,0}\|}_{\oo} \leq \cter{cte:v0} , \qquad \forall n \geq 1. 
\]
Setting  $\mu_i=M-z_i \cter{cte:v0} -|\xi_i^\G|$ and $\mu\leq\min_{i=1,2} \mu_i$,
then $|\xi_{n,i}-\xi_i^{\G}|\leq \mu$ implies $|v_{n,i}|\leq M$. Hence $g_i^{\G,\mu}(\xi_{n,i}-\xi_i^{\G})$ can be written as the sum of a linear and a nonlinear functions as follows:
\be\label{eq:giGmu.decomp}
g_i^{\G,\mu}(\xi_{n,i}-\xi_i^{\G})= \left(g_i^{\G,\mu}\right)'(\mu)\, (\xi_{n,i}-\xi_i^{\G})+g_i^{\G,\mu,NL}(T_M v_{n,i}+z_i v_{n,0}-\xi_i^{\G}).
\ee
By~\eqref{limxni} we immediately get that 
\begin{equation}\label{gil}
\left(g_i^{\G,\mu}\right)'(\mu)\, (\xi_{n,i}-\xi_i^{\G}) \underset{n\to+\oo}{-\hspace{-4pt}-\hspace{-5pt}\ru} \left(g_i^{\G,\mu}\right)'(\mu)\, (\xi_{i}-\xi_i^{\G}) \hbox{ weakly in }L^2(S).
\end{equation}
As for the nonlinear one, from~\eqref{uniae} and~\eqref{limv0} we have that $T_M v_{n,i}+z_i v_{n,0}$ converges almost everywhere in $S\times (0,1)$ to $T_M v_{i}+z_i v_{0}$, for $i=1,2$. Whence, via the Lebesgue convergence dominated theorem we also obtain that 
\begin{equation}\label{ginl}
g_i^{\G,\mu,NL}(T_M v_{n,i}+z_i v_{n,0}-\xi_i^{\G})\underset{n\to+\oo}\longrightarrow g_i^{\G,\mu,NL}(T_M v_{i}+z_i v_{0}-\xi_i^{\G}) \hbox{ strongly in }L^2(S).
\end{equation}
Combining~\eqref{gil} and \eqref{ginl} in \eqref{eq:giGmu.decomp} shows that 
\be\label{eq:giGmu.conv}
g_i^{\G,\mu}(\xi_{n,i}-\xi_i^{\G})) \underset{n\to+\oo}{-\hspace{-4pt}-\hspace{-5pt}\ru} g_i^{\G,\mu}(\xi_{i}-\xi_i^{\G}))  \hbox{ weakly in }L^2(S).
\ee
Therefore, convergences~\eqref{limrvn} and~\eqref{eq:giGmu.conv} lead to
\begin{multline}\label{limr}
\displaystyle
\lim_{n\rw +\oo}\sum_{i=1,2}\sum_{\G\in \{0,1\}}\lt[r_i^\G(T_M \tau_n v_{n,i})g_i^{\G,\mu}(\xi_{n,i}-\xi_i^{\G})\tilde{\xi}_i\rt](\G)\\
\displaystyle
\qquad \qquad =\sum_{i=1,2}\sum_{\G\in \{0,1\}}\lt[r_i^\G(T_M \tau_n v_{i})g_i^{\G,\mu}(\xi_{i}-\xi_i^{\G})\tilde{\xi}_i\rt](\G).
\end{multline}
Then, by collecting~\eqref{lims} and~\eqref{limr}, we obtain the validity of~\eqref{limam}.

Moreover, since $(u_n,v_n)$ is a solution to $(P_{M_n})$ we deduce from~\eqref{limvnzn} and~\eqref{limam} that
\[
\dot u+A_M(v,v)= \lim_{n \rw +\oo} \Delta_n u_n + A_M(\tau_n v_n,v)=0.
\]
Eventually, due to~\eqref{uniae} and~\eqref{eq:v0}, there holds $u=E_M v$, which concludes the proof of the lemma.
\end{proof}
% ====================== STEP 4 =============================
\rd
{\textbf{Step 4.}} To establish Proposition~\ref{expm}, it only remains to prove the following.
\begin{lemma}\label{lem:step4}
Let $(u,v)$ be as in Lemma~\ref{lem:step3}. 
There exists $c_1 >0$ and $\cter{cte:v0.Lip}>0$ depending on the final time $T$ and on the data of the continuous problem, but not on $M$, such that 
\begin{align}
&{\| v_0\|}_{L^\oo(I;H^1(0,1))}\leq \cter{cte:v0}\label{eq:v0bound.2}\\
\mbox{ and }&
{\| \pa_x v_0 \|}_{L^\oo((0,1) \times I)} \le \cter{cte:v0.Lip}\label{eq:v0.Lip2}.
\end{align}
\end{lemma}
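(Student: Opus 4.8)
The bound~\eqref{eq:v0bound.2} is essentially a restatement of~\eqref{eq:bound.v0} passed to the limit: since $v_{n,0} \ru v_0$ in the $L^\oo(I;H^1(0,1))$-weak-$\star$ sense by Lemma~\ref{lem:step3}, the uniform bound $\|v_{n,0}\|_{L^\oo(I;H^1(0,1))} \le \cter{cte:v0}$ from Step 2 is inherited by $v_0$ through weak-$\star$ lower semicontinuity of the norm. So the real content of the lemma is the Lipschitz-in-space estimate~\eqref{eq:v0.Lip2}, which is a genuine gain of regularity on $\pa_x v_0$ beyond the energy class. The plan is to read it off directly from the Poisson equation~\eqref{eq:v0}: since $v_0$ solves $-\lambda^2 \pa_{xx} v_0 = u_0$ in the distributional sense on $(0,1)$ with Robin boundary data, one has $\pa_x v_0(x) = \pa_x v_0(0) + \lambda^{-2}\int_0^x u_0(y)\,dy$ for a.e.\ $t$, so it suffices to control $\|u_0\|_{L^\oo((0,1)\times I)}$ and the boundary trace $\pa_x v_0(0)$ (equivalently $v_0(0)$ via the Robin condition~\eqref{vDPCM.CL.pot}).

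First I would establish the $L^\oo$ bound on $u_0 = z_1 u_1 + z_2 u_2 + \rho_\text{hl}$. For $u_1$ this is immediate from Proposition~\ref{subd} / the bounds~\eqref{eq:bound.u1}: $0 \le u_1 \le \ov u_1$, uniformly in $M$. The bound on $u_2 \ge 0$ is the delicate point, since \textit{a priori} $u_2 = \ov u_2 e_2(T_M v_{n,2})$ could be as large as $\ov u_2 e^M$, which is not $M$-uniform. The free energy estimate~\eqref{eq:Pu}, $\|\P(u)\|_{L^\oo(I)} \le c_T$ with $c_T$ independent of $M$, only gives an $L\log L$-type (hence $L^1$, with a logarithmic refinement) control on $u_2$, not an $L^\oo$ one. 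Therefore I expect one cannot avoid an additional argument here: either one invokes the uniform estimates on the chemical potentials obtained by Moser iteration that the abstract of the paper advertises (which would give $\|v_2\|_{L^\oo} \le c_T$ uniformly in $M$, hence $\|u_2\|_{L^\oo} \le c_T$), or one bootstraps using the one-dimensional structure. Given that this Lemma sits inside the proof of Proposition~\ref{expm} and that $\cter{cte:v0.Lip}$ is claimed to not depend on $M$, the intended route is presumably the former: combine the $L^1$ control of $u_2$ from~\eqref{eq:Pu} with a one-dimensional Sobolev/Moser bootstrap on the Poisson equation to upgrade $u_0 \in L^1 \Rightarrow \pa_x v_0 \in L^\oo \Rightarrow $ control of the drift term in the equation for $v_2$ $\Rightarrow$ via the entropy dissipation and Moser iterations an $L^\oo$ bound on $v_2$ $\Rightarrow$ an $L^\oo$ bound on $u_0$.

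Once $\|u_0\|_{L^\oo((0,1)\times I)} \le c_T$ (uniformly in $M$) is in hand, the rest is routine: integrating the Poisson equation once gives $\|\pa_x v_0(\cdot,t)\|_{L^\oo(0,1)} \le \|\pa_x v_0(0,t)\| + \lambda^{-2}\|u_0(\cdot,t)\|_{L^\oo(0,1)}$; the boundary term $\pa_x v_0(0,t)$ is controlled through the Robin condition $\lambda^2 \pa_x v_0(0)\nu^0 + \beta^0 v_0(0) = f^0$ together with $|v_0(0,t)| \le \|v_0(\cdot,t)\|_{L^\oo(0,1)} \le \|v_0(\cdot,t)\|_{H^1(0,1)} \le \cter{cte:v0}$, which is the one-dimensional Sobolev-Morrey inequality already recorded in the excerpt. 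Assembling these gives~\eqref{eq:v0.Lip2} with $\cter{cte:v0.Lip}$ depending only on $\lambda$, $\beta^0$, $f^0$, $\cter{cte:v0}$, $\rho_\text{hl}$, $z_i$, $\ov u_1$ and $c_T$ — in particular not on $M$. The main obstacle, to reiterate, is the $M$-uniform $L^\oo$ bound on $u_2$ (equivalently $v_2$): everything before and after that is bookkeeping with the already-established Poisson estimate and the free energy bound.
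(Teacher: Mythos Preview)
Your treatment of~\eqref{eq:v0bound.2} is fine and matches the paper: it is just weak-$\star$ lower semicontinuity applied to~\eqref{eq:bound.v0}.

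For~\eqref{eq:v0.Lip2}, however, you have overcomplicated the argument and in doing so made it circular. You correctly write
\[
\pa_x v_0(x,t) = \pa_x v_0(0,t) - \lambda^{-2}\int_0^x u_0(y,t)\,dy,
\]
but then claim that ``it suffices to control $\|u_0\|_{L^\oo((0,1)\times I)}$''. This is not what the formula says: bounding the integral requires only $\|u_0(\cdot,t)\|_{L^1(0,1)}$, not $\|u_0(\cdot,t)\|_{L^\oo(0,1)}$. And an $M$-uniform $L^\oo(I;L^1(0,1))$ bound on $u_0$ is precisely what the free energy estimate~\eqref{eq:Pu} provides (since $\psi_2$ is superlinear at infinity). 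The paper's proof is exactly this: from $-\lambda^2\pa_{xx}v_0 = u_0 \in L^\oo(I;L^1(0,1))$ one gets $\pa_x v_0 \in L^\oo(I;W^{1,1}(0,1)) \hookrightarrow L^\oo((0,1)\times I)$, with constants independent of $M$. No $L^\oo$ bound on $u_2$ is needed at this stage.

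Your proposed detour through the Moser iterations is therefore unnecessary, and worse, it is circular: the Moser argument of Section~\ref{sec.bounds} (Theorems~\ref{upperu2},~\ref{loweruv12},~\ref{upperv1}) explicitly \emph{uses} the bound~\eqref{eq:v0.Lip} established in this lemma in order to control the drift terms. You even record the correct implication ``$u_0\in L^1 \Rightarrow \pa_x v_0 \in L^\oo$'' as the first step of your bootstrap chain---that step alone already proves the lemma; you should stop there rather than continuing to an $L^\oo$ bound on $u_0$ that you do not need and cannot yet have.
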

\begin{proof}
The bound \eqref{eq:v0bound.2} is a direct consequence of the convergence results stated in the proof of Lemma \ref{lem:step3} and of the estimate \eqref{eq:bound.v0}.
We deduce from~\eqref{eq:charge} and~\eqref{eq:v0} that 
\[
- \lambda^2 \pa_{xx} v_0 = \sum_{i=1,2} z_i u_i + \rho_\text{\rm hl}.
\]
In view of~\eqref{eq:Pu}, the right-hand side in the above equation is bounded uniformly w.r.t. $M$ in $L^\oo(I;L^1(0,1))$.
Therefore, $\pa_x v_0$ is bounded uniformly w.r.t. $M$ in $L^\oo(I;W^{1,1}(0,1))$, which is continuously embedded in $L^\oo((0,1) \times I)$, hence~\eqref{eq:v0.Lip2}.
\end{proof}
The proof of Proposition~\ref{expm} is now complete.
\bk 

% ========================== L^\oo bounds ================
\section{Lower and upper bounds for the chemical potentials}\label{sec.bounds}
Let $(u,v)$ be a solution to problem~\eqref{eq:PM}. In this section we provide some ${\rd L^{\oo}_\text{loc}(\R_+\times[0,1])}$-estimates {\rd on $v_i$, $i=1,2$,} which are independent of $M$. As a consequence, by choosing the cutoff level sufficiently large, this will allow to prove that $(u,v)$ is a solution to problem~\eqref{eq:P} too.\\
The proof is based on the Moser-Alikakos iteration technique, cf.~\cite{Moser60, Alikakos79}, which consists in establishing successively $L^p$-norms, for increasing values of $p$, of some appropriate functions of the chemical potentials.

We will first show an upper bound for $v_2$, then both lower bounds for $v_1$ and $v_2$, and finally an upper bound for $v_1$ in three separated theorems. The starting point of each bootstrapping procedure will be provided by estimating an appropriate $L^2$-bound to initialize the method.

 In each part of the section, we use the  identity 
 \be
\label{id_sigma_ptx _v}
\sigma_i(T_Mv_i)\pa_xv_i\pa_xu_i=d_i(\pa_x u_i)^2
\ee
which holds almost everywhere on $(0,1)\times I$ and for $i=1,2$. Indeed, on the one hand, from the chain rule and the identitities $u_i=\bar u_i e_i(T_M v_i)$ and $\sigma_i(z)=d_i\bar u_i e_i'(z)$ we have  
\be
\label{id_sigma_ptx _v_1}
\sigma_i(T_M v_i)\pa_x [T_Mv_i]=d_i \pa_x u_i.
\ee 
On the other hand, since $\pa_xu_i=\pa_x[T_M v_i]=0$ almost everywhere on $\{|v_i|\ge M\}$ and  $\pa_x[T_M v_i]=\pa_xv_i$ almost everywhere else, there holds
\be
\label{id_sigma_ptx _v_2}
\pa_x[ T_M v_i]\pa_x u_i=\pa_xv_i \pa_xu_i.
\ee
Multiplying~\eqref{id_sigma_ptx _v_1} by $\pa_xu_i$ and using~\eqref{id_sigma_ptx _v_2} yields~\eqref{id_sigma_ptx _v}.

In the sequel, $c$ will denote different positive constants independent of $M$.

\subsection{Upper bound for $v_2$}
In order to get an upper bound for the chemical potential $v_2$, we first derive an upper bound for the density $u_2$ in the following theorem. 
\begin{teo}\label{upperu2}
Under assumptions {\rm(H$_1$)--(H$_5$)}, let $(u,v)$ a solution to problem~\eqref{eq:PM}. There exists a constant $c>0$ independent on $M$ such that
\[\|u_2(t)\|_{L^{\oo}(0,1)}\le c\quad \forall t \in I.\]
\end{teo}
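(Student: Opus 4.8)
The strategy is a Moser--Alikakos iteration on powers of $u_2$. Since $\sigma_2(T_M v_2) = d_2 u_2$ and $v_2 = \log(u_2/\ov u_2)$ (on the set $\{|v_2| \le M\}$, and with truncation otherwise), the natural test function in the weak formulation $\dot u + A_M(v,v) = 0$ is of the form $\tilde v = (\tilde v_0, 0, \tilde v_2)$ with $\tilde v_2 = p\, (u_2 - a)_+^{p-1}$ for a suitable constant $a \ge \ov u_2 e_2(0)$ and then let $p \to \infty$, but one must handle the coupling with $v_0$ carefully: the test function must be chosen so that $\tilde\xi_2 = z_2 \tilde v_0 + \tilde v_2$ controls the diffusion term, and here the bound~\eqref{eq:v0.Lip2} on $\pa_x v_0$, uniform in $M$, is exactly what makes the drift term a lower-order perturbation. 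First I would write out the weak formulation tested against such a $\tilde v$, using the identity~\eqref{id_sigma_ptx _v} to turn $\sigma_2(T_M v_2)\pa_x v_2 \pa_x u_2$ into $d_2 (\pa_x u_2)^2$, so that the principal part becomes (a multiple of) $\int_0^1 (\pa_x w^{p/2})^2$ for $w = (u_2-a)_+$ after the usual algebra with $(u_2-a)_+^{p-1}\pa_x u_2$.

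Next I would estimate the remaining terms. The time-derivative term gives $\frac{1}{p}\frac{d}{dt}\int_0^1 \Phi_p(u_2)$ for the convex primitive $\Phi_p$ of $(s-a)_+^{p-1}$, which is the quantity we iterate on. The drift contribution $\int_0^1 d_2 u_2\, z_2\, \pa_x v_0\, \pa_x\big((u_2-a)_+^{p-1} + z_2 \tilde v_0\big)$-type terms are controlled using $\|\pa_x v_0\|_{L^\infty((0,1)\times I)} \le \cter{cte:v0.Lip}$ together with a Young inequality, absorbing a fraction of the Dirichlet energy into the left-hand side and leaving a term like $C p \int_0^1 (u_2 - a)_+^{p}$ or lower powers. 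The boundary terms at $\Gamma = 0$ and $\Gamma = 1$ involve $r_2^\Gamma(T_M v_2) g_2^{\Gamma,\mu}(\xi_2 - \xi_2^\Gamma)$ times the trace of $\tilde\xi_2$; because $g_2^{0,\mu}$ and $g_2^{1,\mu}$ are nondecreasing with $g_2^\Gamma(0)=0$ and the outer potential $\xi_2^\Gamma$ is a fixed constant, choosing $a$ large enough that $\xi_2 - \xi_2^\Gamma > 0$ whenever $u_2(\Gamma) > a$ makes the boundary contribution from $\tilde v_2 = p(u_2 - a)_+^{p-1}$ have a favorable sign (it is dissipative), while the $z_2 \tilde v_0$ part is a bounded perturbation handled again via~\eqref{eq:v0bound.2} and a trace/Gagliardo--Nirenberg inequality. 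After these manipulations one arrives at a differential inequality of the form $\frac{d}{dt} y_p(t) + c\, \|w^{p/2}\|_{H^1}^2 \le C p^{\gamma}\big(y_p(t) + (\text{lower } L^q\text{-norms of } w)\big)$ with constants independent of $M$.

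Then comes the standard Moser bootstrap: using the one-dimensional Gagliardo--Nirenberg--Sobolev inequality $\|w^{p/2}\|_{L^\infty}^2 \lesssim \|w^{p/2}\|_{H^1}\|w^{p/2}\|_{L^2} + \|w^{p/2}\|_{L^2}^2$ (or the interpolation $\|w^{p/2}\|_{L^{2\theta}}$), one converts the $H^1$-control of $w^{p/2}$ into control of a higher $L^q$-norm of $w$, and iterates over $p = 2, 4, 8, \dots$ (or a geometric sequence), tracking the constants $C_p$ to show $\limsup_p \|w(t)\|_{L^p} < \infty$ uniformly in $t \in I$ and in $M$, which yields $\|u_2(t)\|_{L^\infty(0,1)} \le c$. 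The $L^2$-estimate needed to start the iteration — the "$p=2$" base case — follows from the already established free energy bound~\eqref{eq:Pu} on $\Psi(u)$ (which controls $\int u_2 \log u_2$) combined with one application of the same test-function computation at $p=2$; I would establish this base estimate first.

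\textbf{Main obstacle.} The principal difficulty is the bookkeeping of the electrostatic coupling: the test function is naturally $\tilde v_2$, but the energetic structure of $A_M$ pairs it with $\tilde\xi_2 = z_2\tilde v_0 + \tilde v_2$, so the "clean" Moser computation on $u_2$ alone is contaminated by $v_0$-terms both in the bulk (the drift $d_2 u_2 z_2 \pa_x v_0$) and on the boundary. Making the drift term genuinely lower-order — rather than merely comparable to the diffusion — relies crucially on the $M$-independent $L^\infty$ bound on $\pa_x v_0$ from Lemma~\ref{lem:step4}; without it one could not close the iteration with constants uniform in $M$, and hence could not conclude that the solution of~\eqref{eq:PM} solves~\eqref{eq:P}. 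A secondary technical point is ensuring the boundary terms keep the correct sign after the shift by $a$ and the splitting of $g_2^{\Gamma,\mu}$ into its linear and bounded nonlinear parts as in~\eqref{eq:giGmu.decomp}, so that they either help or are absorbed, and never need to be iterated.
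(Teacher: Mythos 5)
Your proposal follows essentially the same route as the paper's proof: a Moser--Alikakos iteration with test function $\tilde v = (0,0,p\,(u_2-k)_+^{p-1})$, the identity~\eqref{id_sigma_ptx _v} to extract the Dirichlet energy, the $M$-uniform $L^\infty$ bound on $\pa_x v_0$ from Lemma~\ref{lem:step4} to control the drift, the monotonicity of $g_2^{\Gamma,\mu}$ and a suitably large level $k$ to dispose of the boundary terms, then Gagliardo--Nirenberg plus the $p=2$ base case (Gronwall). Two small remarks: the paper simply takes $\tilde v_0=0$ in the test function, which avoids the extra terms you contemplate, and the splitting~\eqref{eq:giGmu.decomp} is not needed here — plain monotonicity of $g_2^{\Gamma,\mu}$ (comparison with the outer value $w^{*,\Gamma}$, annihilated by the choice of $k$) already gives the correct sign.
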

\begin{proof}
Let $p\ge 2$ and $w=(u_2-k)_+$, where $k>0$ will be fixed later. We use $(0,0,p w^{p-1})$ as a test function in~\eqref{eq:PM}. We have $\dot u_2\in L^2(I,[H^1(0,1)]^*)$ and $w\in L^2(I,H^1(0,1))\cap L^\oo$, so, using a smoothing argument and classical properties of Sobolev spaces, there holds 
\[\lt<\dot u_2 ;p w^{p-1}\rt>_{(H^1)^*,H^1}
= \lt<\dot w ;pw^{p-1}\rt>_{(H^1)^*,H^1}
=\dfrac d{dt}\lt[\int_0^1w^p\, dx\rt]\qquad\text{in }L^1(I).\] 
Therefore $\lt<\dot u_2;p w^{p-1}\rt>+\lt<A_M(v,v);pw^{p-1}\rt>=0$ reads,
\be\label{equ2}
\dfrac{d}{dt}\int_0^1w^p \,dx=-Q_1-Q_2-Q_3,
\ee
with
\begin{align*}
Q_1&=\int_0^1 \sigma_2(T_M v_2) \pa_x v_2 \pa_x (pw^{p-1})\,dx,
\qquad\quad Q_2=\int_0^1 \sigma_2(T_M v_2) z_2 \pa_x v_0 \pa_x (pw^{p-1})\,dx,\\
Q_3&=\sum_{\G\in \{0,1\}}\lt[r_2^\G(T_M v_2) g_2^{\G,\mu}(\xi_2-\xi_2^{\G})pw^{p-1}\rt](\G).
\end{align*}
In order to estimate the boundary term $Q_3$, we note that $\xi_2-\xi_2^{\G}=v_2(\G)-[\xi_2^{\G}-z_2 v_0(\G)]$ and we define successively
\[v_2^{*,\G}=\xi_2^{\G}-z_2 v_0(\G),\qquad u_2^{*,\G}={\ov u}_2 e_2(T_M v_2^{*,\G})\quad\text{ and }\quad w^{*,\G}=(u^{*,\G}_2-k)_+.
\]
We can then write
\begin{align*}
Q_3&=p\sum_{\G\in \{0,1\}}\lt[r_2^\G(T_M v_2) g_2^{\G,\mu}(v_2-v_2^{*,\G}) \lt(w^{p-1}-(w^{*,\G})^{p-1}\rt)\rt](\G)\\
&\qquad +p\sum_{\G\in \{0,1\}}\lt[r_2^\G(T_M v_2) g_2^{\G,\mu}(v_2-v_2^{*,\G})(w^{*,\G})^{p-1}\rt](\G)=:Q_{31}+Q_{32}.
\end{align*}
Due to the monotonicity of the involved functions, it results $Q_{31}\ge 0$. Let us notice that, due to \eqref{eq:v0bound}, $v_2^{*,\G}$ is bounded independently of $M$. Therefore, it is possible to choose $k$ (independent of $M$) such that $w^{*,\G}=0$ for $\G\in\{0,1\}$ and we get $Q_{32}= 0$
Hence,
\be\label{t3}
-Q_3\le 0.
\ee
For $Q_1$, we use~\eqref{id_sigma_ptx _v} and the fact that $\pa_x w\pa_x u_2=(\pa_x w)^2$ to get
\be
Q_1= p(p-1)d_2 \int_0^1 (w^{(p-2)/2}\pa_x w)^2\,dx=\dfrac{4d_2 (p-1)}{p}\int_0^1 |\pa_x w^{p/2}|^2\,dx.
\ee
We treat the term $Q_2$ as follows. 
Differentiating $w^{p-1}$ and using~\eqref{id_sigma_ptx _v_1}, we get  
\[
Q_2
=p(p-1) z_2 \int_0^1 \sigma_2(T_M v_2)\pa_xw  \pa_x v_0  w^{p-2}\,dx
.
\]
From $u_2=\bar u_2e_2(T_Mv_2)$ and the definitions $e_2(z)=e^z$ and $\sigma_2(z)=d_2\bar u_2e^z$, 
\[
  \sigma_2(T_M v_2)=d_2\bar u_2 \exp(\ln(u_2/\bar u_2))=d_2 u_2,
\]
so that we get 
\[
Q_2 = d_2 p(p-1) z_2 \int_0^1  u_2w^{p-2} \pa_xv_0 \pa_x w \, dx.
\]
In any case we have $0\le u_2\le k+w$, so we have $|Q_2|\le Q_{21}+Q_{22}$ where 
\begin{align*}
Q_{21}&=d_2 p(p-1) |z_2| \|\pa_x v_0\|_\oo  \int_0^1w^{p-1}|\pa_x w| \,dx\\
Q_{22}&= k d_2 p(p-1)   |z_2|\, \|\pa_x v_0\|_\oo \int_0^1w^{p-2}|\pa_x w| \,dx.
\end{align*}
Taking into account that $\pa_x v_0$ is bounded uniformy w.r.t. $M$ (see Lemma~\ref{lem:step4}), that $p-1=(p/2-1)+ p/2$ and that $pw^{p/2-1}\pa_x w=2 \pa_x w^{p/2}$, and applying the Young inequality, we get
\begin{align}\nonumber
Q_{21}&\le d_2 \int_0^1 |\pa_x w^{p/2}|^2 \,dx+d_2 (p-1)^2 |z_2|^2\|\pa_x v_0\|^2_\oo\int_0^1 w^p \,dx\\
\label{t21}
&\le d_2 \int_0^1 |\pa_x w^{p/2}|^2 \,dx+c d_2 \,p^2 \int_0^1 w^p \,dx.
\end{align}
Arguing similarly for the term $Q_{22}$ and using additionally the Young inequality: 
\[
ab\le \dfrac{p-2}{p}a^{\frac{p}{p-2}}+\dfrac{2}{p}b^{\frac p2} \quad \text{ for }a,b\ge 0,\, p > 2,
\]
we have
\begin{multline}\label{t22}
Q_{22}\le d_2 \int_0^1 |\pa_x w^{p/2}|^2 \,dx+k^2 d_2 (p-1)^2 |z_2|^2\|\pa_x v_0\|^2_{L^{\oo}(0,1)} \int_0^1 w^{p-2} \,dx\\
\le d_2 \int_0^1 |\pa_x w^{p/2}|^2 \,dx+ d_2 c\,p^2\int_0^1 w^p \,dx+ d_2 c\,p.
\end{multline}
By collecting~\eqref{equ2}--\eqref{t22}, we obtain
\be\label{triangle}
\dfrac{d}{dt}\int_0^1w^p \,dx+ d_2 \dfrac{2p-4}{p}\int_0^1 |\pa_x w^{p/2}|^2 \,dx\le 
d_2 \cter{cte:3} p^2\int_0^1 w^p \,dx+d_2 c p,\quad  \forall p> 2.
\ee
where $\ctel{cte:3}$ and the generic constant $c$ depends neither on $M$ nor on $p$. Without loss of generality, we assume that $\cter{cte:3} \geq 1/16$.
Now we combine the following Gagliardo-Nirenberg interpolation inequality:
\[
\|\chi\|^3_{L^2(0,1)}\le c \|\chi\|^2_{L^1(0,1)}\left(\|\chi\|^2_{L^2(0,1)}+\|\pa_x\chi\|^2_{L^2(0,1)}\right)^{1/2},\qquad \forall \chi \in H^1(0,1),
\]
together with the Young inequality to get that, for $\e \leq \frac12$, 
\[
\frac12 \int_0^1 \chi^2 \,dx\le (1-\e) \int_0^1 \chi^2 \,dx\le \dfrac{c}{\sqrt{\e}}\lt(\int_0^1 |\chi|\,dx\rt)^2+\e \int_0^1 |\pa_x \chi|^2 \,dx.
\] 
We apply it with $\chi=w^{p/2}$, $\e=(p-2)/(\cter{cte:3} p^3) \in (0,1/2)$ since $p\geq 4$ and $\cter{cte:3} \geq 1/16$. Then, by the choice of $\e$, from~\eqref{triangle} we deduce that (still for $p\geq 4$)
\[\dfrac{d}{dt}\int_0^1w^p \,dx\le \dfrac{c\,p^2}{\sqrt{\e}}\lt(\int_0^1 w^{p/2} \,dx\rt)^2+c_2 p\le c\,p^3 \lt[\lt(\int_0^1 w^{p/2} \,dx\rt)^2+1\rt].\]
We integrate the last inequality with respect to the time variable and choose $k$ such that $w(0)=0$ (\textit{i.e.} $k\geq \sup u_2^{in}$) getting
\[
\int_0^1w^p (t) \,dx\le c\,p^3 \lt[\sup_{s\in I}\lt(\int_0^1 w^{p/2}(s) \,dx\rt)^2+1\rt] \quad \forall t \in I,
\]
that is,
\be\label{stella}
\|w(t)\|^p_{L^p(0,1)}\le c\,p^3 \lt[\sup_{s\in I}\|w(s)\|^p_{L^{p/2}(0,1)}+1\rt] \quad \forall t \in I\text{ and }\forall p>2.
\ee
Taking inspiration in \cite{Glitzky_2011}, we set for $m\in \mathbb{N}$:
\[
b_m=\max\left(1,\sup_{s\in I}\|w(s)\|^{2^m}_{L^{2^m}(0,1)}\right).
\]
Choosing $p=2^m$ in~\eqref{stella}, we obtain 
$$
b_m\le c\,2^{3m} b^2_{m-1}
$$
and by induction, we get
\[
 b_m \le c^{(2^{m-1}-1)}\,(2^3)^{(2^m-m-1)}\,b_1^{2^{m-1}}\quad \forall m\geq 1.
\]
Taking the power $1/2^m$ leads to the estimate:
\[
\|w(t)\|^2_{L^{2^m}(0,1)}\le c^{1-1/2^{m-1}}(2^3)^{2-(m+1)/2^{m-1}} \max\left(1,\sup_{s\in I}\|w(s)\|^2_{L^2(0,1)}\right) 
\quad \forall t \in I.
\]
Sending $m$ to $+\oo$ we find
\be\label{bornew}
\|w(t)\|_{L^{\oo}(0,1)}\le c\lt[\sup_{s\in I}\|w(s)\|_{L^2(0,1)}+1\rt] \quad \forall t \in I.
\ee
The last step consists in obtaining a $L^2$-estimate of the norm of $w(s)$. To this aim, we can consider inequality~\eqref{triangle} in the limit $p \searrow 2$:
\[
\dfrac{d}{dt}\int_0^1w^2 \,dx\le c\left( 1+  \int_0^1 w^2 \,dx \right).
\]
We apply the Gronwall's lemma to have 
\[
\int_0^1 w^2(s) \,dx\le \lt(\int_0^1 w^2(0)\,dx +1\rt)e^{cs}\le c,
\]
due to the bounds on the initial data stated in (H$_5$), independently of $M$.
Eventually, choosing 
\[
k\ge \max\lt\{\|u_2^{\text{\rm in}}\|_{L^{\oo}(0,1)},|u_2^{*,0}|,|u_2^{*,1}|\rt\},
\]
there exists a positive constant $c$ which \emph{does not depend on $M$} such that $0\le u_2\le c$ over $I$.
\end{proof}
\begin{oss}\label{upperv2}
In particular, since $u_2=\bar u_2 e_2(T_M v_2)=\bar u_2\exp(T_M v_2)$, Theorem~\ref{upperu2} implies that
\[T_M v_2 \le e_2^{-1}\lt(\dfrac c{\bar u_2}\rt)=\log\lt(\dfrac c{\bar u_2}\rt)\quad\text{almost everywhere on } I\times(0,1), \]
hence so does $v_2$ by choosing $M \geq \log\lt(\dfrac c{\bar u_2}\rt)$.

\end{oss}
\subsection{Lower bounds for $v_1$ and $v_2$}
With the following theorem we derive some lower bounds for the chemical potentials $v_1$ and $v_2$. For this, we write the chemical potentials and the mobilities appearing in the problem as functions of the corresponding carrier densities. More precisely, the chemical potentials $v_i(u_i)$, $i=1,2$, are given by~\eqref{vi}, while the $\sigma_i(u_i)$ are given by~(H$_3$) so that~\eqref{id_sigma_ptx _v} holds.
\begin{teo}\label{loweruv12}
Under assumptions {\rm(H$_1$)--(H$_5$)}, let $(u,v)$ a solution to problem~\eqref{eq:PM}. There exists a constant $c>0$ independent on $M$ such that
\[v_i\ge -c\quad\text{almost everywhere in } I\times(0,1) \text{ for } i=1,2.\]
\end{teo}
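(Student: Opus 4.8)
The plan is to run the Moser--Alikakos bootstrap exactly as in Theorem~\ref{upperu2}, but now on a truncation that vanishes when the density $u_i$ is \emph{large}, and to treat $i=1$ and $i=2$ by the same argument. First I would fix a reference level. Since $v_0$ is bounded in $L^{\oo}$ uniformly in $M$ by~\eqref{eq:v0bound}, the outer potentials $v_i^{*,\G}=\xi_i^\G-z_iv_0(\G)$, and hence the equilibrium densities $u_i^{*,\G}=\bar u_i e_i(v_i^{*,\G})>0$, are bounded uniformly in $M$; moreover $\inf_{(0,1)}u_i^{\mathrm{in}}>0$ by~(H$_5$). I would therefore pick a constant $k>0$, independent of $M$, with $k<\min\big(\inf u_i^{\mathrm{in}},\ \min_\G u_i^{*,\G}\big)$, and set $w=(k-u_i)_+$, so that $w=0$ at $t=0$ and $w^{*,\G}:=(k-u_i^{*,\G})_+=0$.

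For $p\ge2$ I would test~\eqref{eq:PM} with the function having $p\,w^{p-1}$ in the $i$-th slot and zeros elsewhere. As in Theorem~\ref{upperu2}, but with the opposite sign (here $w$ decreases in $u_i$), one gets $\langle\dot u_i,p\,w^{p-1}\rangle=-\frac{d}{dt}\int_0^1 w^p\,dx$, so that $\frac{d}{dt}\int_0^1 w^p\,dx=Q_1+Q_2+Q_3$ with $Q_1=\int_0^1\sigma_i(T_Mv_i)\pa_xv_i\,\pa_x(p\,w^{p-1})\,dx$, $Q_2=z_i\int_0^1\sigma_i(T_Mv_i)\pa_xv_0\,\pa_x(p\,w^{p-1})\,dx$ and $Q_3$ the boundary term. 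Using $\sigma_i(T_Mv_i)\pa_xv_i\,\pa_xu_i=d_i(\pa_xu_i)^2$ and $\pa_xw=-\pa_xu_i$ on $\{u_i<k\}$ yields the dissipative term $Q_1=-\frac{4(p-1)d_i}{p}\int_0^1|\pa_xw^{p/2}|^2\,dx$. For $Q_3$: on $\mathrm{supp}\,w(\G)$ one has $u_i(\G)<k<u_i^{*,\G}$, hence $e_i(T_Mv_i(\G))<e_i(v_i^{*,\G})$, hence $v_i(\G)<v_i^{*,\G}$; since $g_i^{\G,\mu}$ is nondecreasing and vanishes at $0$ and $w^{*,\G}=0$, the splitting $Q_3=Q_{31}+Q_{32}$ of Theorem~\ref{upperu2} now gives $Q_{32}=0$ and $Q_{31}\le0$, so $Q_3\le0$. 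For $Q_2$: on $\{u_i<k\}$ one has $\sigma_i(T_Mv_i)\le d_iu_i\le d_ik$, and $\pa_xv_0$ is bounded uniformly in $M$ by Lemma~\ref{lem:step4}, so Young's inequality absorbs the gradient factor into $Q_1$ and leaves $\le cp^2k^2\int_0^1 w^{p-2}\,dx$. Collecting these facts I obtain the analogue of~\eqref{triangle},
\[
\frac{d}{dt}\int_0^1 w^p\,dx+\frac{2(p-1)d_i}{p}\int_0^1|\pa_xw^{p/2}|^2\,dx\le cp^2k^2\int_0^1 w^{p-2}\,dx,\qquad p\ge2,
\]
with $c$ independent of $M$ and $p$.

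From here the computation follows Theorem~\ref{upperu2} verbatim. For $p=2$ the right-hand side is just $ck^2$, so Gronwall together with $w|_{t=0}=0$ gives $\sup_I\|w\|_{L^2(0,1)}^2\le ck^2T$; for $p\ge4$ one estimates $\int_0^1 w^{p-2}\le\int_0^1 w^p+1$, invokes the Gagliardo--Nirenberg inequality and Young to absorb $cp^2k^2\int_0^1 w^p$ into the dissipation, and runs the dyadic iteration $p=2^m$, obtaining $\|w(t)\|_{L^{\oo}(0,1)}\le c\big(\sup_I\|w\|_{L^2(0,1)}+1\big)$ for every $t\in I$, with $c$ independent of $M$.

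The last step is where the lower bound genuinely differs from the upper bound of Theorem~\ref{upperu2}: a finite $L^{\oo}$ bound on $w$ is useful only if it is strictly below $k$, since $u_i\ge k-\|w\|_{L^{\oo}}$. Tracking the constants above shows that on a time interval $[0,T_0]$ of a fixed length $T_0$ depending only on the data one has $\|w\|_{L^{\oo}((0,1)\times(0,T_0))}\le\tfrac12k$, hence $u_i\ge\tfrac12k$ on $(0,1)\times[0,T_0]$. I would then propagate this over $[0,T]$ by iterating the whole argument on successive intervals of length $T_0$, the (still $M$-independent) lower bound inherited from the previous interval playing the role of $\inf u_i^{\mathrm{in}}$; after $\lceil T/T_0\rceil$ steps this produces a constant $\delta>0$, independent of $M$ (but depending on $T$), with $u_i\ge\delta$ on $(0,1)\times[0,T]$. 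Then $e_i(T_Mv_i)\ge\delta/\bar u_i$ a.e., and choosing $M\ge|e_i^{-1}(\delta/\bar u_i)|$ forces $v_i\ge e_i^{-1}(\delta/\bar u_i)=:-c$ a.e.\ on $I\times(0,1)$, for $i=1,2$. I expect the main difficulty to be precisely this long-time propagation --- a single pass of the Moser iteration does not suffice --- together with keeping track of the signs in $Q_3$ across the truncation $T_M$.
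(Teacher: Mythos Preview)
Your setup through the differential inequality is correct: the choice $w=(k-u_i)_+$, the sign analysis for the boundary term $Q_3$, the bound $\sigma_i(T_Mv_i)\le d_ik$ on $\{u_i<k\}$, and the resulting analogue of~\eqref{triangle} are all fine. The genuine gap is in the last step, precisely where you already suspect trouble. The Moser--Alikakos bootstrap, as run in Theorem~\ref{upperu2}, yields
\[
\|w\|_{L^\oo}\;\le\;C\bigl(\textstyle\sup_I\|w\|_{L^2}+1\bigr),
\]
where $C$ comes from the Gagliardo--Nirenberg constant and the dyadic product $\prod_m(c\,2^{3m})^{1/2^m}$; this $C$ is a fixed number $\ge 1$ and does \emph{not} shrink when $\sup_I\|w\|_{L^2}$ is small. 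Since your $w$ already satisfies $0\le w\le k$, the bound is informative only if it is strictly below $k$; you would need $C\le k/2$, which is an illegitimate constraint on the data (recall $k<\min\{\inf u_i^{\textrm{in}},\min_\G u_i^{*,\G}\}$ can be arbitrarily small). Rescaling $\tilde w=w/k\in[0,1]$ makes the differential inequality $k$-independent, but then the iteration only returns $\|\tilde w\|_\oo\le 1$, regardless of $T_0$. So the claim ``on $[0,T_0]$ one has $\|w\|_\oo\le k/2$'' cannot be obtained from this machinery, and the time-interval iteration never gets started.

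The paper sidesteps this obstruction by truncating at the level of the \emph{chemical potential}: it sets $w=(-T_Mv_i-k)_+$, which is a~priori bounded only by $M-k$, so that \emph{any} $M$-independent $L^\oo$ bound on $w$ is immediately meaningful and gives $v_i\ge -k-\|w\|_\oo$. The price is that $\pa_t w$ is no longer $\pm\pa_t u_i$; using $u_i=\ov u_i e_i(T_Mv_i)$ one finds $\pa_t w=-\pa_t[T_Mv_i]=-\dfrac{d_i}{\sigma_i(T_Mv_i)}\pa_t u_i$, which forces the test function $\bigl(0,-pd_iw^{p-1}/\sigma_i(T_Mv_i),0\bigr)$ (or its $i=2$ analogue). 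Differentiating the factor $1/\sigma_i$ produces an extra term, but it has the right sign: for $i=2$ one has $\sigma_2'/\sigma_2\equiv 1$, and for $i=1$ one has $\sigma_1'/\sigma_1=-\tanh(\cdot/2)\ge 0$ on $\{T_Mv_1<-k\}$. With $|\sigma_i'/\sigma_i|\le 1$ the remaining drift term is handled exactly as in Theorem~\ref{upperu2}, and the iteration closes globally on $I$ without any short-time restriction.
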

\begin{proof}
Let $p\ge 2$ and $w_i=(-T_M v_i-k)_+$ for $i=1,2$, where $k>0$ will be fixed later. Observe that $\n w_i=-\n[T_M v_i]$ almost everywhere on $\{w_i>0\}$ and $\n w_i=0$ almost everywhere in the rest of the domain.\\
 We do both calculations simultaneously and to lighten the exposition we make the following abuse of notation: we drop all subscripts $i$ except when a precise notation is required (we write $w$ for $w_i$, $u$ for $u_i$, $\sigma(T_Mv)$ for $\sigma_i(T_Mv_i)$, $\xi$ for $\xi_i$, $\xi^\G$ for $\xi_i^\G$, \textit{etc}). \\
We have using the chain rule and $u=\bar u e_i(T_Mv)$,
\[
\dfrac d{dt}\int_0^1w^p \,dx= p\int_0^1  w^{p-1} \pa_t w\ \,dx= -p\int_0^1  w^{p-1} \pa_t [T_M v]\ \,dx= -p\int_0^1 \dfrac{w^{p-1}}{\bar u e_i'(T_Mv))}\pa_t u.
\]
Since by definition, $\sigma_i(y)=d_i \bar u_i e_i'(z)$, this reads
\[
\dfrac d{dt}\int_0^1w^p \,dx=-pd\int_0^1 \dfrac{w^{p-1}}{\sigma(T_M v)}\pa_t u\, dx
\]
Applying~$\dot u + A_M(v,v)=0$ with the test function
\[
\lt(0,-\dfrac{pd_1 w_1^{p-1}}{\sigma_1(T_Mv_1)},0\rt)\quad\text{if }i=1,\qquad
\lt(0,0,-\dfrac{p d_2 w_2^{p-1}}{\sigma_2(T_Mv_2)}\rt)\quad \text{if }i=2,
\] 
we have 
\be
\label{a}
\dfrac d{dt}\int_0^1w^p \,dx= R_1 +R_2 +R_3,
\ee
where, 
\begin{align*}
R_1&=pd\int_0^1 \sigma(T_Mv) \pa_x v \pa_x \lt(\dfrac{w^{p-1}}{\sigma(T_Mv)}\rt)\,dx,
\qquad\quad R_2=pdz\int_0^1 \sigma(T_Mv)\pa_x v_0 \pa_x \lt(\dfrac{w^{p-1}}{\sigma(T_Mv)}\rt) \,dx,\\
R_3 &= pd\sum_{\G\in \{0,1\}}\lt[r^\G(T_M v) g^{\G,\mu}(\xi-\xi^{\G})\dfrac{w^{p-1}}{\sigma(T_Mv)}\rt](\G).
\end{align*}
As in the previous proof, we estimate the boundary term $R_3$ using the fact that $\xi-\xi^{\G}=v(\G)-[\xi^{\G}-z v_0(\G)]$. By defining $v^{*,\G}=\xi^{\G}-z v_0(\G)$ and $w^{*,\G}=(-v^{*,\G}-k)_+$, we write
\begin{align*}
R_3&=pd\sum_{\G\in \{0,1\}}\lt[\dfrac{r^\G(T_Mv)}{\sigma(T_Mv)} g^{\G,\mu}(v-v^{*,\G})\lt(w^{p-1}-(w^{*,\G})^{p-1}\rt)\rt](\G)\\
&\qquad \quad +pd\sum_{\G\in \{0,1\}}\lt[\dfrac{r^\G(T_Mv)}{\sigma(T_Mv)} g^{\G,\mu}(v-v^*)(w^{*,\G})^{p-1}\rt](\G)\\
&=:R_{31}+R_{32}.
\end{align*}
Since $r^\G$ and $\sigma$ are positive and $g^{\G,\mu}$ is increasing, we have $R_{31}\le 0$. By choosing $k$ large enough so that $w^{*,\G}=0$ for $\G\in\{0,1\}$, we get $R_{32}= 0$. Whence it results 
\be\label{a3}
R_3\le 0.
\ee

To treat $R_1$, we write 
\be\label{deriv_wp-1/sigma}
\pa_x\lt(\dfrac{pw^{p-1}}{\sigma(T_Mv)}\rt)=\dfrac{p(p-1)}{\sigma(T_Mv)}w^{p-2}\pa_xw + \dfrac{p\sigma'(T_M v)}{[\sigma(T_Mv)]^2} w^{p-1}\pa_xw.
\ee
The term $R_1$ then splits as follows:
\[
R_1=-d p(p-1)\int_0^1 w^{p-2}|\pa_x w|^2 \,dx - dp\int_0^1 \dfrac{\sigma'(T_Mv)}{\sigma(T_Mv)}w^{p-1}|\pa_x w|^2 \,dx=:R_{11}+R_{12}.
\]
We easily see that
\[
R_{11}=\dfrac{-4d(p-1)}p\int_0^1 |\pa_x w^{p/2}|^2\,dx.
\]
Next, for $i=2$, $\sigma_2'/\sigma_2\equiv 1$ so that $R_{12}\le0$ in this case. For $i=1$, $\sigma_1'(y)/\sigma_1(y)=-\tanh(y/2)$ so $\sigma_1'(T_Mv_1)/\sigma_1(T_Mv_1)\ge0$ on $\{w_1>0\}=\{T_Mv_1<-k\}$ and $R_{12}\le0$ also in this case. We conclude that 
\be
\label{a11} 
R_1\le\dfrac{-4d(p-1)}p\int_0^1 |\pa_x w^{p/2}|^2\,dx.
\ee

Using again~\eqref{deriv_wp-1/sigma} and $|\sigma'/\sigma|\le 1$, we have for the remaining term $|R_2|\le R_{21}+R_{22}$ with
\begin{align*}
R_{21}&=dp^2(p-1)|z|\|\pa_xv_0\|_\oo \int_0^1  w^{p-2}|\pa_x w| \,dx,\\
R_{22}&= dp^2|z|\|\pa_xv_0\|_\oo \int_0^1  w^{p-1} |\pa_xw| \,dx.
\end{align*}
Writing
\[
R_{21}=2dp(p-1)|z|\|\pa_xv_0\|_\oo \int_0^1  w^{p/2-1}|\pa_x w^{p/2}| \,dx.
\]
and using the Cauchy--Schwarz and Young inequalities, we get
\begin{align}\nonumber
|R_{21}|&\le \dfrac{d(p-1)}p\int_0^1 |\pa_x w^{p/2}|^2 \,dx+ d(p-1)p^3|z|^2 \|\pa_x v_0\|^2 \int_0^1 w^{p-2} \,dx\\
\label{a21}
&\le\dfrac{d(p-1)}p \int_0^1 |\pa_x w^{p/2}|^2 \,dx+d\, c\,p^4\lt(1 + \int_0^1 w^p \,dx\rt).
\end{align}
Similarly, 
\be\label{a22}
|R_{22}|\le\dfrac{d(p-1)}p \int_0^1 |\pa_x w^{p/2}|^2 \,dx+d\, c\,p^4\int_0^1 w^p \,dx.
\ee
By collecting~\eqref{a3}, \eqref{a11}, \eqref{a21} and \eqref{a22}, we obtain
\be\label{Moser}
\dfrac{d}{dt}\int_0^1w^p \,dx+ \dfrac{2d(p-1)}p\int_0^1 |\pa_x w^{p/2}|^2 \,dx \le d\, c p^4 \lt(1+\int_0^1 w^p \,dx \rt), \quad \forall p\ge 2.
\ee
As in the proof of Theorem~\ref{upperu2}, we deduce from this estimate that
\[
\|w(t)\|_{L^{\oo}(0,1)}\le c\lt[\sup_{s\in I}\|w(s)\|_{L^2(0,1)}+1\rt] \quad \forall t \in I,
\]
for some constant $c>0$ independent of $M$. To initialize the process, we consider~\eqref{Moser} with $p=2$ and apply Gronwall's lemma to deduce the bound 
\[
\sup\{\|w(s)\|_{L^2(0,1)}:s\in I\}\le c, 
\]
for some constant $c$ depending on the $T$, $c_1$, $c_2$ and the $L^\oo$-norm of $v^{\text{\rm in}}$ (bounded thanks to (H$_5$)). 

As a conclusion, choosing 
\[
k\ge \max\{\|v_1^{in}\|_{L^{\oo}(0,1)}, \|v_2^{in}\|_{L^{\oo}(0,1)},|v_1^{*,0}|, |v_1^{*,1}|,|v_2^{*,0}|, |v_2^{*,1}|\},
\] 
there holds $v_i(t)\ge -c$ almost everywhere in $I\times(0,1)$ and for  $i=1,2$ with a constant $c>0$ independent of $M$.
\end{proof}

\subsection{Upper bound for $v_1$}
We state here an upper bound for the chemical potential $v_1$.
\begin{teo}\label{upperv1}
Under assumptions~{\rm(H$_1$)--(H$_5$)}, let $(u,v)$ a solution to problem~\eqref{eq:PM}. There exists a constant $c>0$ independent on $M$ such that
\[v_1(t)\le c,\ \forall t \in I.\]
\end{teo}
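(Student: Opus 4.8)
The plan is to mimic the Moser--Alikakos iteration used in Theorems~\ref{upperu2} and~\ref{loweruv12}, but now applied to $w=(T_M v_1-k)_+$ with $k>0$ to be chosen large (independent of $M$). The main structural difference with the electron case is that here the mobility $\sigma_1(z)=d_1\bar u_1 e^z/(1+e^z)^2$ degenerates as $z\to+\oo$, and $u_1=\bar u_1 e^z/(1+e^z)\to\bar u_1$ is bounded; so rather than testing with $pw^{p-1}$ as for $u_2$, I would test $\dot u+A_M(v,v)=0$ against $(0,\,p d_1 w^{p-1}/\sigma_1(T_M v_1),\,0)$ as in the proof of Theorem~\ref{loweruv12}, which turns the time derivative into $\frac{d}{dt}\int_0^1 w^p\,dx$ via the chain rule and the identity $\sigma_1(z)=d_1\bar u_1 e_1'(z)$. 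This produces three terms $R_1+R_2+R_3$ exactly as before.

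For the boundary term $R_3$, I would again write $\xi_1-\xi_1^\Gamma=v_1(\Gamma)-v_1^{*,\Gamma}$ with $v_1^{*,\Gamma}=\xi_1^\Gamma-z_1 v_0(\Gamma)$, which is bounded uniformly in $M$ thanks to~\eqref{eq:v0bound}; setting $w^{*,\Gamma}=(v_1^{*,\Gamma}-k)_+$ and splitting $R_3=R_{31}+R_{32}$, the monotonicity of $g_1^{\Gamma,\mu}$ and positivity of $r_1^\Gamma/\sigma_1$ give $R_{31}\ge 0$, hence $-R_{31}\le 0$, while choosing $k$ large enough (independent of $M$) kills $R_{32}$. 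Thus $R_3\le 0$. For $R_1$, differentiating $w^{p-1}/\sigma_1(T_M v_1)$ gives a good dissipative term $-\frac{4d_1(p-1)}{p}\int_0^1|\partial_x w^{p/2}|^2\,dx$ plus a term with factor $\sigma_1'/\sigma_1=-\tanh(\cdot/2)$; the crucial sign check is that on $\{w>0\}=\{T_M v_1>k\}$ one has $\tanh(T_M v_1/2)\ge 0$, so that extra contribution is $\le 0$ — this is the exact mirror of the lower-bound computation and uses $|\sigma_1'/\sigma_1|\le 1$. The drift term $R_2$ is handled with Cauchy--Schwarz and Young, absorbing $\|\partial_x v_0\|_\infty$ (bounded uniformly in $M$ by Lemma~\ref{lem:step4}), yielding a polynomial-in-$p$ coefficient in front of $\int_0^1 w^p\,dx$ plus a lower-order constant.

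Collecting everything I expect an inequality of the form
\[
\frac{d}{dt}\int_0^1 w^p\,dx+\frac{2d_1(p-1)}{p}\int_0^1|\partial_x w^{p/2}|^2\,dx\le c\,p^4\Big(1+\int_0^1 w^p\,dx\Big),\qquad p\ge 2,
\]
with $c$ independent of $M$ and $p$. From here the bootstrap is identical to the one in Theorem~\ref{upperu2}: combine a one-dimensional Gagliardo--Nirenberg inequality with Young to trade the $L^p$ term for the gradient term, integrate in time using $w(0)=0$ (choose $k\ge\|v_1^{\text{\rm in}}\|_{L^\infty(0,1)}$), set $b_m=\max(1,\sup_{s\in I}\|w(s)\|_{L^{2^m}(0,1)}^{2^m})$, derive $b_m\le c\,2^{4m}b_{m-1}^2$, iterate, take the $2^m$-th root and let $m\to\infty$ to get $\|w(t)\|_{L^\infty(0,1)}\le c(\sup_{s\in I}\|w(s)\|_{L^2(0,1)}+1)$; finally initialize with the $p=2$ case of the displayed inequality and Gronwall to bound the $L^2$ norm by a constant depending on $T$ and the data via (H$_5$). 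Choosing $k\ge\max\{\|v_1^{\text{\rm in}}\|_{L^\infty(0,1)},|v_1^{*,0}|,|v_1^{*,1}|\}$ then gives $v_1(t)=T_M v_1(t)\le k+\|w(t)\|_\infty\le c$ on $I$ (valid provided $M\ge c$), with $c$ independent of $M$. The main obstacle — and the only genuinely new point — is verifying that the degenerate mobility $\sigma_1$ still yields the right sign in $R_1$ and does not spoil the $R_2$ estimate; concretely, one must check that the weight $1/\sigma_1(T_M v_1)$ multiplying $u_1$-type factors in $R_2$ stays controlled on $\{w>0\}$, which it does because there $u_1/\sigma_1=(1+e^{T_M v_1})/d_1$ grows only like $e^{T_M v_1}$, exactly compensated by the decay of $u_1(\bar u_1-u_1)/\bar u_1$ appearing in the flux, so all products reduce to bounded quantities times powers of $w$.
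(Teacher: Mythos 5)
Your proposal is correct and takes essentially the same approach as the paper, whose proof is itself only a pointer to the computation in Theorem~\ref{loweruv12}: you test with $(0, p d_1 w^{p-1}/\sigma_1(T_M v_1),0)$ for $w=(T_M v_1-k)_+$, exploit $r_1^\Gamma/\sigma_1>0$ and the monotonicity of $g_1^{\Gamma,\mu}$ (with $k$ large enough so that $w^{*,\Gamma}=0$) to kill the boundary term, verify the key sign $\sigma_1'/\sigma_1=-\tanh(\cdot/2)\le 0$ on $\{T_M v_1>k\}$ so the extra bulk term is dissipative, bound the drift term via $|\sigma_1'/\sigma_1|\le 1$ and $\|\partial_x v_0\|_\infty\le\cter{cte:v0.Lip}$, and then run the Moser--Alikakos iteration initialized by Gronwall at $p=2$. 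This is exactly what the authors intend by ``the proof follows the same lines''; your account correctly supplies the flipped signs and the $\tanh$ check that make the mirror argument work.
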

\begin{proof}
Let $p\ge 2$ and $w=(T_M v_1-k)_+$, with 
\[
k\ge\max \lt\{ \|v_1^{\text{\rm in}}\|_{L^{\oo}(0,1)},|v_1^{*,0}|,|v_1^{*,1}|\rt\}
\] 
where we have set $v_1^{*,\G}=\xi_1^{\G}-z_1 v_0(\G)$ for $\G\in\{1,2\}$.\\
 We compute 
\be\label{proof_upbound_v1}
\dfrac d{dt}\int_0^1w^p \,dx= p\int_0^1w^{p-1}\pa_t[T_Mv_1]\ \,dx=pd\int_0^1 \dfrac{w^{p-1}}{\sigma_1(T_M v_1)}\pa_t u_1\, dx.
\ee
Here we can apply~$\dot u + A_M(v,v)=0$ with the test function
\[
\lt(0,\dfrac{p w^{p-1}}{\sigma_1(T_Mv_1)},0\rt),
\]
and express the right hand side of~\eqref{proof_upbound_v1} as a sum of integrals involving space derivatives of $w$. The proof follows exactly the same lines of that of Theorem~\ref{loweruv12}, we do not repeat the details. %
\end{proof}

{
\section{Conclusion}\label{sec:conclu}

\subsection{Proof of Theorem~\ref{teoex}}\label{sec:concluproof}

In this paper, we have introduced a new corrosion model inspired from the DPCM introduced in \cite{Eacta}. The changes we introduced are motivated by the expected compatibility of the model with thermodynamics. Indeed, they permit to establish the decay of a free energy with a control of the dissipation of energy as stated in Section~\ref{sec.apriori}.

The main result of the paper is the existence of a weak solution to this new corrosion model, stated in Theorem~\ref{teoex}. In Section~\ref{sec.PM}, we have first established the existence of a solution to a regularized problem $(P_M)$. Then lower and upper bounds for the chemical potentials $(v_i)_{i=1,2}$ have been proved in Section~\ref{sec.bounds}, so that we are now able to conclude the proof of Theorem~\ref{teoex}.

Indeed, to find a solution to problem~\eqref{eq:P} it suffices to show the existence of a solution on any finite time interval of the form $I=[0,T]$, $T>0$. Let us fix such $T>0$ and let $M>0$. Proposition~\ref{expm} ensures the existence of a solution $(u,v)$ to the regularized problem~\eqref{eq:PM} on $I$. Now, Theorems~\ref{upperu2},~\ref{loweruv12},~\ref{upperv1} and Remark~\ref{upperv2} guarantee the existence of bounds for $\|v_i\|_{L^{\oo}(I\times [0,1])}$, $i=1,2$ independent of $M$. Consequently, for $M$ large enough, for this solution the operators $A_M$ and $E_M$ coincide with $A$ and $E$ respectively, and $(u,v)$ turns out to be a solution to the original problem~\eqref{eq:P}.

\subsection{Towards a comparison between the DPCM and the new model}\label{sec:DPCMvsvDPCM}

Some numerical methods have been introduced for the simulation of the DPCM in \cite{JCP} and implemented in the code CALIPSO. For the simplified two-species DPCM on a fixed domain, the numerical scheme is described and analyzed in \cite{CHLV-Colin}. It is based on a Scharfetter-Gummel approximation of the linear drift-diffusion fluxes.
The main change we made when designing the new model (we will call in the sequel vDPCM) relies on the new definition of the fluxes of cations which are now nonlinear with respect to the densities of cations. For the approximation of this new flux, we consider an extension of the SQRA (SQuare-Root Approximation) introduced in \cite{SQRA}. The scheme will be introduced and analyzed in further work. 

In order to compare the two models experimentally, we consider the test case proposed in \cite[Appendix A]{BCHZ}, restricted to 2-species. We keep the same values for all the parameters, even if the changes we made should induce new values of parameters for the vDPCM. The $pH$ (needed for the computation of some parameters) is set equal to 8.5.  

Figure~\ref{Fig:Courant_Tension} shows the evolution of the total current (which is a combination of the electronic and the cationic currents) with respect to the applied potential $V$ (expressed in Volts and evaluated relatively to the electrode reference NHE). We observe that the solutions to the DPCM and the vDPCM have similar qualitative behaviours albeit quantitatives differences.

\begin{figure}[ht]
\centering
\includegraphics[width=0.6\textwidth]{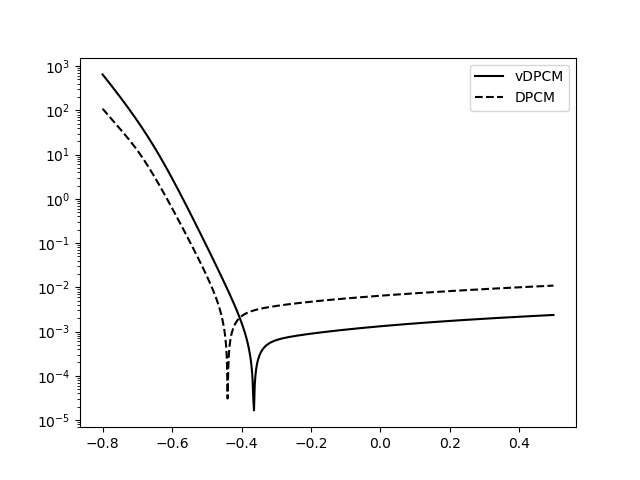}
\caption{Evolution of the total current for the steady state (in physical units $A\cdot m^{-2}$) in terms of the applied potential (in Volts) at $pH=8.5.$}
\label{Fig:Courant_Tension}
\end{figure}

Figure~\ref{Fig:profiles} shows the profiles of the densities of cations and electrons and of the electrostatic potential obtained with the two models at two different times, still for a $pH$ equal to $8.5$. The applied potential is equal to $0.3$ Volts. We chose a first time $t=18$s at which the system is still evolving and a second time $t=1510 s$ at which a steady state is reached for both models. We observe once again the quantitative differences between the two models, while showing that these differences stay controlled.
\begin{figure}[ht]
\begin{tabular}{ccc}
\includegraphics[width=0.3\textwidth]{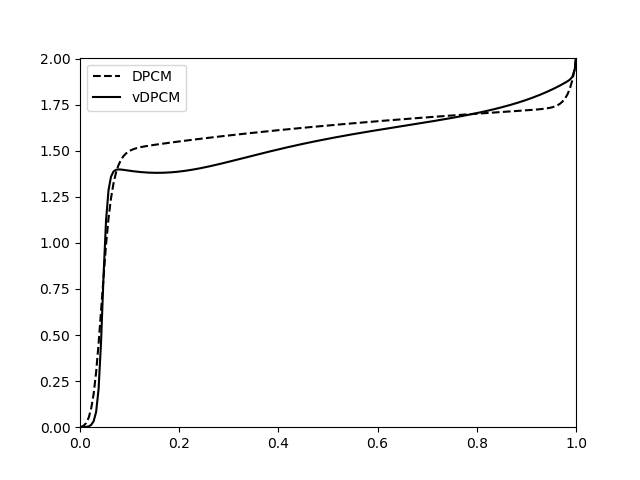}&
\includegraphics[width=0.3\textwidth]{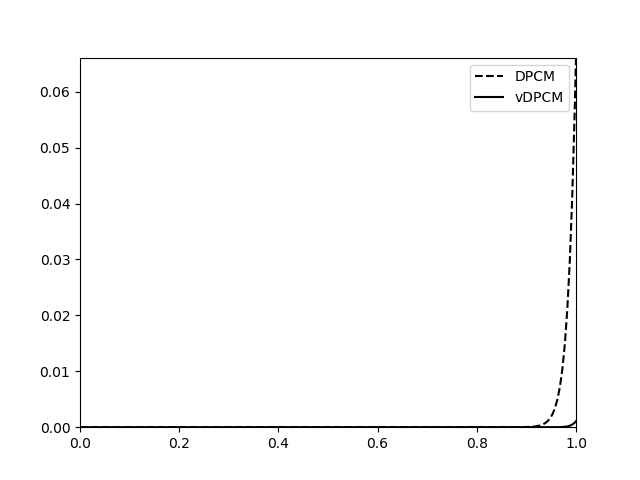}&
\includegraphics[width=0.3\textwidth]{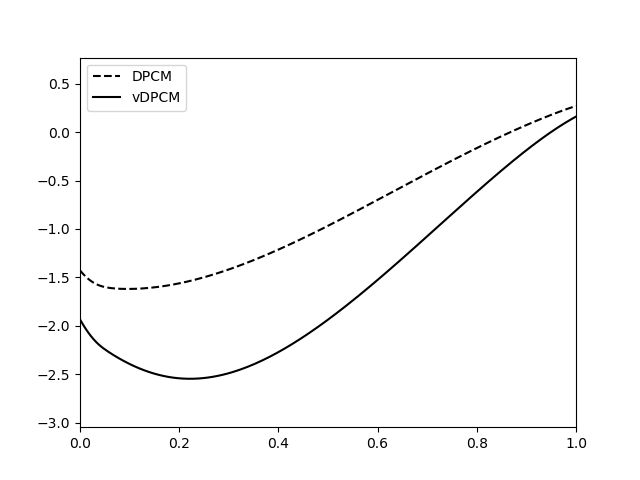}\\
density of cations&density of electrons&electrostatic potential\\
\includegraphics[width=0.3\textwidth]{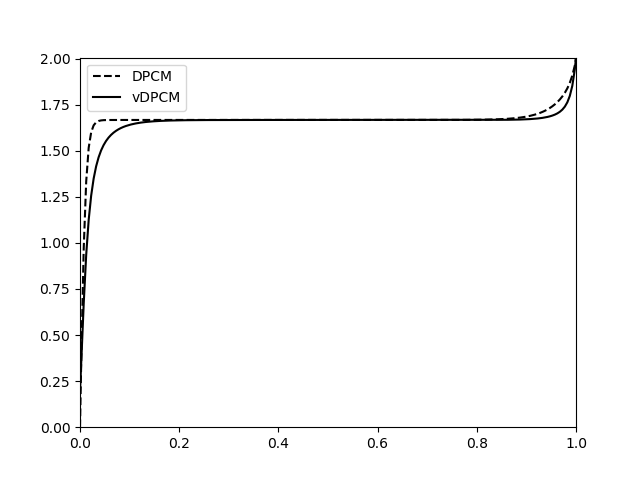}&
\includegraphics[width=0.3\textwidth]{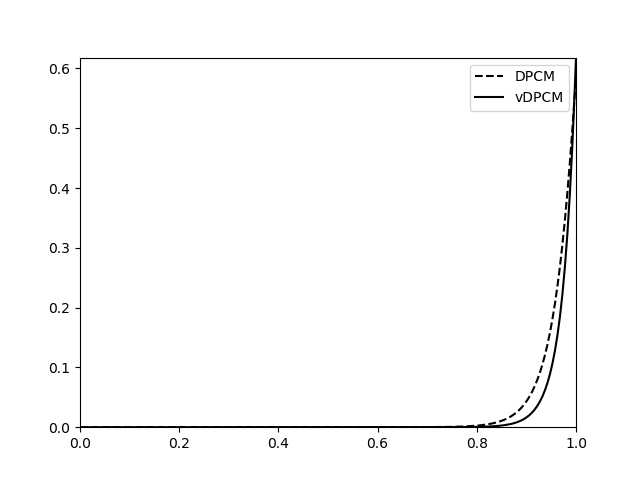}&
\includegraphics[width=0.3\textwidth]{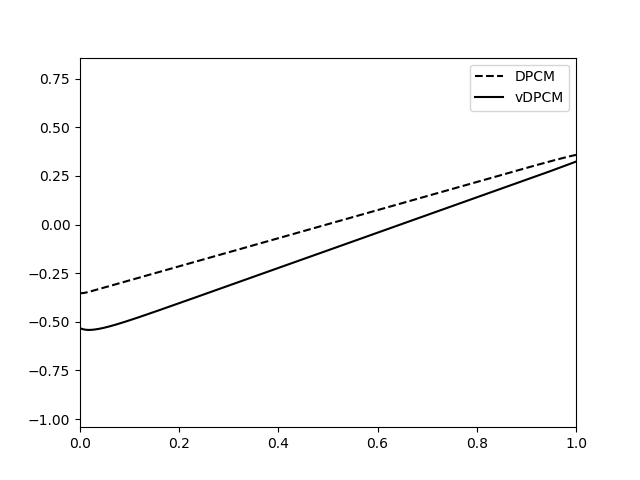}\\
density of cations&density of electrons&electrostatic potential
\end{tabular}
\caption{Profiles of the scaled densities of cations, electrons and of the electrostatic potential (in Volts, the physical unit) at two different times: $t=18$s at the top, $t=1510$s at the bottom (steady-state).}
\label{Fig:profiles}
\end{figure}

As we have established in this paper the well-posedness of the vDCPM without any restriction on the set of parameters, we will be able to introduce the numerical methods and make more numerical investigations in further work. Moreover, one big challenge for the future consists  in adapting the present work to the 3-species DPCM \cite{Eacta} on moving domain. 
}

\subsection*{Aknowledgements} The authors warmly thank Christian Bataillon for his kind feedback on the model. 
This project has received funding from the European Union's Horizon 2020 research and innovation programme under grant agreement
 No 847593 (WP DONUT), and was further supported by Labex CEMPI (ANR-11-LABX-0007-01). C. Cancès also 
 acknowledges support from the COMODO project (ANR-19-CE46-0002) and C. Chainais-Hillairet from the MOHYCON project (ANR-17-CE40-0027-01). 
 J. Venel warmly thanks the Inria research center of the University of Lille for its hospitality.

%\nocite{*}
%\bibliographystyle{acm}
%\bibliography{bibliDPCM}

\begin{thebibliography}{10}

\bibitem{Alikakos79}
{\sc Alikakos, N.~D.}
\newblock $l^p$ bounds of solutions of reaction-diffusion equations.
\newblock {\em Comm. Partial Differential Equations 4}, 8 (1979), 827--868.

\bibitem{Eacta}
{\sc Bataillon, C., Bouchon, F., Chainais-Hillairet, C., Desgranges, C.,
  Hoarau, E., Martin, F., Tupin, M., and Talandier, J.}
\newblock Corrosion modelling of iron based alloy in nuclear waste repository.
\newblock {\em Electrochimica Acta vol. 55}, 15 (2010), 4451--4467.

\bibitem{JCP}
{\sc Bataillon, C., Bouchon, F., Chainais-Hillairet, C., Fuhrmann, J., Hoarau,
  E., and Touzani, R.}
\newblock Numerical methods for simulation of a corrosion model with moving
  numerical methods for simulation of a corrosion model with moving oxide
  layer.
\newblock {\em Journal of Computational Physics 231}, 18 (2012), 6213--6231.

\bibitem{BCHZ}
{\sc Breden, M., Chainais-Hillairet, C., and Zurek, A.}
\newblock Existence of traveling wave solutions for the {D}iffusion {P}oisson
  {C}oupled {M}odel: a computer-assisted proof.
\newblock {\em ESAIM: Mathematical Modelling and Numerical Analysis 55}, 4
  (2021), 1669--1697.

\bibitem{Brezis6566}
{\sc Brezis, H.}
\newblock Les op\'erateurs monotones.
\newblock {\em S\'eminaire Choquet. Initiation \`a l'analyse 5}, 2 (1965-1966).
\newblock talk:10.

\bibitem{Brezis73}
{\sc Brezis, H.}
\newblock {\em Op\'erateurs maximaux monotones et semi-groupes de contractions
  dans les espaces de {H}ilbert}.
\newblock No.~50 in Notas de Matem\'atica. North-Holland, Amsterdam, 1973.

\bibitem{Brezis11}
{\sc Brezis, H.}
\newblock {\em Functional analysis, {S}obolev spaces and partial differential
  equations}.
\newblock Universitext. Springer, New York, 2011.

\bibitem{CHB}
{\sc Chainais-Hillairet, C., and Bataillon, C.}
\newblock Mathematical and numerical study of a corrosion model.
\newblock {\em Numer. Math. 110}, 1 (2008), 1--25.

\bibitem{CHLV-Colin}
{\sc Chainais-Hillairet, C., Colin, P.-L., and Lacroix-Violet, I.}
\newblock Convergence of a finite volume scheme for a corrosion model.
\newblock {\em Int. J. Finite Vol. 12\/} (2015), 27.

\bibitem{CHG-NARWA}
{\sc Chainais-Hillairet, C., and Gallou\"{e}t, T.~O.}
\newblock Study of a pseudo-stationary state for a corrosion model: existence
  and numerical approximation.
\newblock {\em Nonlinear Anal. Real World Appl. 31\/} (2016), 38--56.

\bibitem{CHLV-AML}
{\sc Chainais-Hillairet, C., and Lacroix-Violet, I.}
\newblock The existence of solutions to a corrosion model.
\newblock {\em Appl. Math. Lett. 25}, 11 (2012), 1784--1789.

\bibitem{CHLV-DCDS}
{\sc Chainais-Hillairet, C., and Lacroix-Violet, I.}
\newblock On the existence of solutions for a driftdiffusion system arising in
  corrosion modelling.
\newblock {\em DCDS-B\/} (2014).

\bibitem{GG_1986}
{\sc Gajewski, H., and Gr\"{o}ger, K.}
\newblock On the basic equations for carrier transport in semiconductors.
\newblock {\em J. Math. Anal. Appl. 113}, 1 (1986), 12--35.

\bibitem{GG_1989}
{\sc Gajewski, H., and Gr\"{o}ger, K.}
\newblock Semiconductor equations for variable mobilities based on {B}oltzmann
  statistics or {F}ermi-{D}irac statistics.
\newblock {\em Math. Nachr. 140\/} (1989), 7--36.

\bibitem{GG_1990}
{\sc Gajewski, H., and Gr\"{o}ger, K.}
\newblock Initial-boundary value problems modelling heterogeneous semiconductor
  devices.
\newblock In {\em Surveys on analysis, geometry and mathematical physics},
  vol.~117 of {\em Teubner-Texte Math.} Teubner, Leipzig, 1990, pp.~4--53.

\bibitem{GG_1996}
{\sc Gajewski, H., and Gr\"{o}ger, K.}
\newblock Reaction-diffusion processes of electrically charged species.
\newblock {\em Math. Nachr. 177\/} (1996), 109--130.

\bibitem{Glitzky_2008}
{\sc Glitzky, A.}
\newblock Analysis of spin-polarized drift-diffusion models.
\newblock {\em {PAMM} 8}, 1 (dec 2008), 10717--10718.

\bibitem{Glitzky_2011}
{\sc Glitzky, A.}
\newblock Analysis of electronic models for solar cells including energy
  resolved defect densities.
\newblock {\em Mathematical Methods in the Applied Sciences 34}, 16 (sep 2011),
  1980--1998.

\bibitem{Glitzky_2012}
{\sc Glitzky, A.}
\newblock An electronic model for solar cells including active interfaces and
  energy resolved defect densities.
\newblock {\em {SIAM} Journal on Mathematical Analysis 44}, 6 (jan 2012),
  3874--3900.

\bibitem{GGH_1996}
{\sc Glitzky, A., Gr\"{o}ger, K., and H\"{u}nlich, R.}
\newblock Free energy and dissipation rate for reaction diffusion processes of
  electrically charged species.
\newblock {\em Applicable Analysis 60}, 3-4 (apr 1996), 201--217.

\bibitem{GH_1997}
{\sc Glitzky, A., and H{\"u}nlich, R.}
\newblock Energetic estimates and asymptotics for electro-reaction-diffusion
  systems.
\newblock {\em {ZAMM} - Journal of Applied Mathematics and Mechanics /
  Zeitschrift f{\"u}r Angewandte Mathematik und Mechanik 77}, 11 (1997),
  823--832.

\bibitem{SQRA}
{\sc {Heida}, M.}
\newblock {Convergences of the squareroot approximation scheme to the
  Fokker-Planck operator}.
\newblock {\em {Math. Models Methods Appl. Sci.} 28}, 13 (2018), 2599--2635.

\bibitem{Lions69}
{\sc Lions, J.-L.}
\newblock {\em Quelques m{\'e}thodes de r{\'e}solution de problemes aux limites
  non lin{\'e}aires}.
\newblock Dunod, 1969.

\bibitem{Mie11}
{\sc Mielke, A.}
\newblock A gradient structure for reaction-diffusion systems and for
  energy-drift-diffusion systems.
\newblock {\em Nonlinearity 24}, 4 (2011), 1329--1346.

\bibitem{Moser60}
{\sc Moser, J.}
\newblock A new proof of de {G}iorgi' s theorem concerningthe regularity
  problem for elliptic differential equations.
\newblock {\em Comm. Pure Appl. Math. 13\/} (1960), 457--468.

\bibitem{Moussa_2016}
{\sc Moussa, A.}
\newblock Some variants of the classical {A}ubin-{L}ions lemma.
\newblock {\em J. Evol. Equ. 16}, 1 (2016), 65--93.

\bibitem{Onsager_1931_I}
{\sc Onsager, L.}
\newblock Reciprocal relations in irreversible processes. {I}.
\newblock {\em Physical Review 37}, 4 (feb 1931), 405--426.

\bibitem{Onsager_1931_II}
{\sc Onsager, L.}
\newblock Reciprocal relations in irreversible processes. {II}.
\newblock {\em Physical Review 38}, 12 (dec 1931), 2265--2279.

\bibitem{vR1950}
{\sc Van~Roosbroeck, W.}
\newblock Theory of the flow of electrons and holes in germanium and other
  semiconductors.
\newblock {\em Bell System Tech. J. 29\/} (1950), 560--607.

\end{thebibliography}
\end{document}